\theoremstyle{plain}
\newtheorem{lemma}{Lemma}[section]
\newtheorem{theorem}[lemma]{Theorem}
\newtheorem{proposition}[lemma]{Proposition}
\newtheorem{corollary}[lemma]{Corollary}
\theoremstyle{definition}
\newtheorem{definition}[lemma]{Definition}
\newtheorem{remark}[lemma]{Remark}
\numberwithin{equation}{section}
\newcommand{\be}{\begin{equation}}
\newcommand{\ee}{\end{equation}}
\DeclareMathOperator{\A}{\mathcal{A}}
\newcommand{\dom}{\textrm{Dom\,}}
\newcommand{\R}{\mathbb{R}}
\newcommand{\N}{\mathbb{N}}
\newcommand{\Q}{\mathbb{Q}}
\newcommand{\supp}{\text{\rm supp}}
\newcommand{\Z}{\mathbb{Z}}
\newcommand{\gr}{\textrm{graph}}
\newcommand{\ve}{\varepsilon}
\newcommand{\erre}{\mathbb{R}}
\newcommand{\cI}{\mathcal{I}}
\newcommand{\enne}{\mathbb{N}}
\newcommand{\HH}{\mathbb{H}}
\newcommand{\f}{\varphi}
\newcommand{\T}{\mathcal{T}}
\renewcommand{\r}{\varrho}
\renewcommand{\L}{\mathcal{L}}
\newcommand{\RCD}{\mathsf{RCD}}
\newcommand{\CD}{\mathsf{CD}}
\newcommand{\Geo}{{\rm Geo}}
\newcommand{\MCP}{\mathsf{MCP}}
\newcommand{\mm}{\mathfrak m}
\newcommand{\qq}{\mathfrak q}
\newcommand{\QQ}{\mathfrak Q}
\newcommand{\sfd}{\mathsf d}
\newcommand{\Opt}{\mathrm{OptGeo}}
\begin{document}

\author{Fabio Cavalletti}\thanks{Universit\`a degli Studi di Pavia, Dipartimento di Matematica, email: fabio.cavalletti@unipv.it} 
\title[An Overview of $L^{1}$ optimal transportation]{An Overview of $L^{1}$ optimal transportation \\ on metric measure spaces}

\keywords{optimal transport; Monge problem; Ricci curvature; curvature dimension condition}

\bibliographystyle{plain}

\begin{abstract}
The scope of this note is to make a self-contained survey of the recent developments and achievements of the theory of $L^{1}$-Optimal Transportation on metric measure spaces. 
Among the results proved in the recent papers \cite{CM1,CM2} where the author, together with A. Mondino, 
proved a series of sharp (and in some cases rigid) geometric and functional inequalities in the setting of 
metric measure spaces enjoying a weak form of Ricci curvature lower bound, we 
review the proof of the L\'evy-Gromov isoperimetric inequality.
\end{abstract}

\maketitle


\section{Introduction}

The scope of this note is to make a self-contained survey of the recent developments and achievements of the theory of $L^{1}$-Optimal Transportation on metric measure spaces. 
We will focus on the general scheme adopted in the recent papers \cite{CM1,CM2} where the author, together with A. Mondino, 
proved a series of sharp (and in some cases even rigid and stable) geometric and functional inequalities in the setting of 
metric measure spaces enjoying a weak form of Ricci curvature lower bound. 
Roughly the general scheme consists in reducing the initial problem to a family of easier one-dimensional problems; 
as it is probably the most relevant result obtained with this technique, we will review in detail how to proceed to obtain the L\'evy-Gromov isoperimetric inequality for metric measure spaces 
verifying the Riemmanian Curvature Dimension condition (or, more generally, essentially non-branching 
metric measure spaces verifying the Curvature Dimension condition). 

In \cite{biacava:streconv, cava:MongeRCD} a fine analysis of the Monge problem in the metric setting was done treating, with a different perspective, 
similar questions whose answers were later used also in \cite{CM1,CM2}. 
We therefore believe the Monge problem and V.N. Sudakov's approach to it (see \cite{sudakov}) is a good starting point for our review and to see 
how $L^{1}$-Optimal Transportation naturally yields a reduction of the problem to a family of one-dimensional problems. 

It is worth stressing that the dimensional reduction proposed by V.N. Sudakov to solve the Monge problem is only one of the strategy to attack the problem.
Monge problem has a long story and many different authors contributed to obtain solutions in different frameworks with different approaches; 
here we only mention that the first existence result for the Monge problem was independently obtained in \cite{caffa:Monge} and in \cite{trudi:Monge}. 
We also mention the subsequent generalizations obtained in \cite{Ambrosio:Monge,AKP,feldMccann} and we refer to the monograph \cite{Vil:topics} 
for a more complete list of results.

\medskip
\subsection{Monge problem}
The original problem posed by Monge in 1781 can be restated in modern language as follows: 
given two Borel probability measures $\mu_{0}$ and $\mu_{1}$ over $\R^{d}$, called marginal measures,  
find the optimal manner of transporting $\mu_{0}$ to $\mu_{1}$; the transportation of $\mu_{0}$ to $\mu_{1}$ is understood as a map $T : \R^{d} \to \R^{d}$  
assigning to each particle $x$ a final position $T(x)$ fulfilling the following compatibility condition
\begin{equation}\label{E:transportmap}
 \qquad T_{\sharp} \, \mu_{0} = \mu_{1}, \qquad \textrm{i.e. }\quad  \mu_{0}(T^{-1}(A)) = \mu_{1}(A), \quad \forall \, A \ \textrm{Borel set};
\end{equation}
any map $T$ verifying the previous condition will be called a transport map.
The optimality requirement is stated as follows:
\begin{equation}\label{E:MongeEuclid}
\int_{\R^{d}} |T(x) - x| \, \mu_{0}(dx) \leq \int_{\R^{d}} |\hat T(x) -x | \, \mu_{0}(dx), 
\end{equation}
for any other $\hat T$ transport map. 
In proving the existence of a minimizer, 
the first difficulty appears studying the domain of the minimization, that is the set of maps $T$ verifying \eqref{E:transportmap}. Suppose 
$\mu_{0} = f_{0} \L^{d}$ and $\mu_{1} = f_{1} \L^{d}$ where $\L^{d}$ denotes the $d$-dimensional Lebesgue measure; 
a smooth injective map $T$ is then a transport map if and only if 
$$
f_{1} (T(x)) |\det (DT)(x) | = f_{0}(x), \qquad \mu_{0}\textrm{-a.e.} \ x \in \R^{d},
$$
showing a strong non-linearity of the constrain. 
The first big leap in optimal transportation theory was  achieved by Kantorovich considering a suitable relaxation of the problem: 
associate to each transport map the probability measure $(Id,T)_{\sharp} \mu_{0}$ over $\R^{d}\times \R^{d}$ and introduce the set 
of \emph{transport plans}
$$
\Pi(\mu_{0},\mu_{1}) : = \left\{ \pi \in \mathcal{P}(\R^{d}\times \R^{d}) \colon P_{1\,\sharp} \pi = \mu_{0},\  P_{2\,\sharp} \pi = \mu_{1} \right\}; 
$$
where $P_{i} : \R^{d} \times \R^{d} \to \R^{d}$ is the projection on the $i$-th component, with $i =1,2$.  
By definition $(Id,T)_{\sharp} \mu_{0} \in \Pi(\mu_{0},\mu_{1})$ and
$$
\int_{\R^{d}} |T(x) - x| \, \mu_{0}(dx) = \int_{\R^{d}\times \R^{d}} |x-y| \, \left( (Id, T)_{\sharp} \mu_{0}\right)(dxdy);
$$
then it is natural to consider the minimization of the following functional (called Monge-Kantorovich minimization problem)
\begin{equation}\label{E:MK}
\Pi(\mu_{0},\mu_{1}) \ni \pi \longmapsto \mathcal{I}(\pi)  : = \int_{\R^{d}\times\R^{d}} |x-y| \, \pi(dxdy).
\end{equation}
The big advantage being now that $\Pi(\mu_{0},\mu_{1})$ is a convex subset of $\mathcal{P}(\R^{d}\times \R^{d})$ and it is compact 
with respect to the weak topology.
Since the functional $\mathcal{I}$ is linear, the existence of a minimizer follows straightforwardly.
Then a strategy to obtain a solution of the original Monge problem is to start from an optimal transport plan $\pi$ and prove 
that it is indeed concentrated on the graph of a Borel map $T$; the latter is equivalent to $\pi = (Id,T)_{\sharp} \mu_{0}$. 

To run this program one needs to deduce from optimality some condition on the geometry of the support of the transport plan. 
This was again obtained by Kantorovich introducing a dual formulation of \eqref{E:MK} and finding out that for any probability measures 
$\mu_{0}$ and $\mu_{1}$ with finite first moment, there exists a $1$-Lipschitz function $\f : \R^{d} \to \R$ such that 
$$
\Pi(\mu_{0},\mu_{1}) \ni \pi \ \textrm{is optimal} \quad \iff \quad \pi \big(\{ (x,y) \in \R^{2d} \colon \f(x) - \f(y) = |x-y| \} \big) = 1.
$$
At this point one needs to focus on the structure of the set
\begin{equation}\label{E:transportproduct}
\Gamma : = \big\{ (x,y) \in \R^{2d} \colon \f(x) - \f(y) = |x-y| \big\}.
\end{equation}

\begin{definition}
A set $\Lambda \subset \R^{2d}$ is $|\cdot|$-cyclically monotone if and only if for any finite subset of $\Lambda$, 
$\{ (x_{1},y_{1}), \dots, (x_{N},y_{N})\} \subset \Lambda$  it holds
$$
\sum_{1\leq i\leq N} |x_{i} - y_{i}| \leq \sum_{1 \leq i\leq N} |x_{i} - y_{i+1}|, 
$$
where $y_{N+1} : = y_{1}$.
\end{definition}

Almost by definition, the set $\Gamma$ is $|\cdot|$-cyclically monotone 
and whenever $(x,y) \in \Gamma$ considering $z_{t} : = (1-t) x + t y$ with $t \in [0,1]$ it holds that $(z_{s},z_{t}) \in \Gamma$, 
for any $s \leq t$. In particular this suggests that $\Gamma$ produces a family of disjoint lines of $\R^{d}$ along where the optimal transportation should move. 
This can be made rigorous considering the following ``relation'' between points: 
a point $x$ is in relation with $y$ if, using optimal geodesics selected by the above optimal transport problem, one can travel from $x$ to $y$ or viceversa. 
That is, consider $R : = \Gamma \cup \Gamma^{-1}$ and define $x \sim y$ if and only if $(x,y) \in R$.
Then $\R^{d}$ will be decomposed (up to a set of Lebesgue-measure zero) as $\T \cup Z$ 
where $\T$ will be called the \emph{transport set} and $Z$ the set of points not moved by the optimal transportation problem.
The important property of $\T$ being that
$$
\T = \bigcup_{q\in Q} X_{q}, \qquad X_{q} \textrm{ straight line}, \qquad X_{q} \cap X_{q'} = \emptyset, \quad \textrm{if } q \neq q'.
$$
Here $Q$ is a set of indices; a convenient way to index a straight line $X_{q}$ is to select an element of $X_{q}$ and call it, with an abuse of notation, $q$. 
With this choice the set $Q$ can be understood as a subset of $\R^{d}$.
Once a partition of the space is given, one obtains via Disintegration Theorem a corresponding decomposition of marginal measures: 
$$
\mu_{0} = \int_{Q} \mu_{0\,q} \, \qq(dq), \qquad \mu_{1} = \int_{Q} \mu_{1\, q} \, \qq(dq);
$$
where $\qq$ is a Borel probability measure over the set of indices $Q \subset \R^{d}$. If $Q$ enjoys a measurability condition (see Theorem \ref{T:disintr} for details), 
the conditional measures $\mu_{0\,q}$ and $\mu_{1\,q}$ are concentrated on the straight line with index $q$, i.e. $\mu_{0\,q} (X_{q}) =  \mu_{1\,q} (X_{q})=  1$,  for $\qq$-a.e. $q \in Q$.

Then a classic way to construct an optimal transport maps is to 

\begin{itemize}
\item[-] consider $T_{q}$ the monotone rearrangement along $X_{q}$ of $\mu_{0\, q}$ to $\mu_{1\,q}$;
\item[-] define the transport map $T$ as $T_{q}$ on each $X_{q}$. 
\end{itemize}
The map $T$ will be then an optimal transport map moving $\mu_{0}$ to $\mu_{1}$; it is indeed easy to check that 
$(Id,T)_{\sharp}\mu_{0} \in \Pi(\mu_{0},\mu_{1})$ and $(x,T(x)) \in \Gamma$ for $\mu_{0}$-a.e. $x$.

\medskip

So the original Monge problem has been reduced to the following family of one-dimensional problems: 
for each $q \in Q$ find a minimizer of the following functional 
$$
\Pi(\mu_{0\,q},\mu_{1\,q}) \ni \pi \longmapsto \mathcal{I}(\pi)  : = \int_{X_{q}\times X_{q}} |x-y| \,\pi(dxdy),
$$
that is concentrated on the graph of a Borel function.
As $X_{q}$ is isometric to the real line, whenever $\mu_{0\,q}$ does not contain any atom (i.e $\mu_{0\,q} (x) = 0$, for all $x \in X_{q}$), 
the monotone rearrangement $T_{q}$ exists and the existence of an optimal transport map $T$ constructed as before follows.
The existence of a solution has been reduced therefore to a regularity property of the disintegration of $\mu_{0}$. 

As already stressed before, this approach to the Monge problem, mainly due to V.N. Sudakov, was proposed in \cite{sudakov} 
and was later completed in the subsequent papers \cite{caffa:Monge} and in \cite{trudi:Monge}. 
See also \cite{caravenna} for a complete Sudakov approach to Monge problem when the Euclidean distance is replaced by any strictly convex norm and 
\cite{biadaneri} where any norm is considered. 
In all these papers, assuming $\mu_{0}$ to be absolutely continuous with respect to $\L^{d}$ give the sufficient regularity to solve the problem.

\medskip

The Monge problem can be actually stated, and solved, in a much more general framework. 
Given indeed two Borel probability measures $\mu_{0}$ and $\mu_{1}$ over a complete and 
separable metric space $(X,\sfd)$, the notion of transportation map perfectly makes 
sense and the optimality condition \eqref{E:MongeEuclid} can be naturally formulated using the distance $\sfd$ as a cost function instead of the Euclidean norm:
\begin{equation}\label{E:Mongemetric}
\int_{\R^{d}} \sfd(T(x), x) \, \mu_{0}(dx) \leq \int_{\R^{d}} \sfd(\hat T(x),x) \, \mu_{0}(dx). 
\end{equation}
The problem can be relaxed to obtain a transport plan $\pi$ solution of the corresponding Monge-Kantorovich minimization problem. 
Also the Kantorovich duality applies yielding the existence of a $1$-Lipschitz function $\f : X \to \R$ such that 
$$
\Pi(\mu_{0},\mu_{1}) \ni \pi \ \textrm{is optimal} \quad \iff \quad \pi \big( \Gamma \big) = 1,
$$
where $\Gamma := \{ (x,y) \in X\times X \colon \f(x) - \f(y) = \sfd(x,y) \}$ is $\sfd$-cyclically monotone.  \\
All the strategy proposed for the Euclidean problem can be adopted: produce a decomposition of $X$ as
$\T \cup Z$ where $Z$ is the set of points not moved by the optimal transportation problem and 
$\T$ is the transport set and it is partitioned, up to a set of measure zero, by a family of geodesics $\{ X_{q} \}_{q \in Q}$; 
via Disintegration Theorem one obtains as before a reduction of the Monge problem to a family of one-dimensional problems
$$
\Pi(\mu_{0\,q},\mu_{1\,q}) \ni \pi \longmapsto \mathcal{I}(\pi)  : = \int_{X_{q}\times X_{q}} \sfd(x,y) \,\pi(dxdy).
$$
Therefore, since $X_{q}$ with distance $\sfd$ is isometric to an interval of the real line with Euclidean distance, 
the problem is reduced to proving that for $\qq$-a.e. $q \in Q$ the conditional measure $\mu_{0\,q}$ does not have any atoms.

Clearly in showing such a result, besides the regularity of $\mu_{0}$ itself, the regularity of the ambient space $X$ does play a crucial role.
In particular, together with the localization of the Monge problem to $X_{q}$, it should come a localization of the regularity of the space.  
This is the case when the metric space $(X,\sfd)$ is endowed with a reference probability measure $\mm$ and the resulting 
metric measure space $(X,\sfd,\mm)$ verifies a weak Ricci curvature lower bound.  

In \cite{biacava:streconv} we in fact  observed that if $(X,\sfd,\mm)$ verifies the so-called measure contraction property $\MCP$, then
for $\qq$-a.e. $q \in Q$ the one-dimensional metric measure space $(X_{q},\sfd,\mm_{q})$ verifies $\MCP$ as well, 
where $\mm_{q}$ is the conditional measure of $\mm$ with respect to the family of geodesics $\{ X_{q}\}_{q \in Q}$.
Now the assumption $\mu_{0}\ll \mm$ is sufficient to solve the Monge problem.
It is worth mentioning that \cite{biacava:streconv} was the first contribution where regularity of conditional measures were obtained in a purely non-smooth framework.
The techniques introduced in \cite{biacava:streconv} permitted also to threat such regularity issues in the infinite dimensional setting of Wiener space; see \cite{cava:Wiener}.
\medskip

This short introduction should suggest that $L^{1}$-Optimal Transportation permits to obtain an efficient dimensional reduction 
together with a localization of the ``smoothness'' of the space for very general metric measure spaces.
We now make a short introduction also to the L\'evy-Gromov isoperimetric inequality.

\medskip
\subsection{L\'evy-Gromov isoperimetric inequality}

The L\'evy-Gromov isoperimetric inequality \cite[Appendix C]{Gro} can be stated as follows: if $E$ is a (sufficiently regular) 
subset of a Riemannian manifold $(M^N,g)$ with dimension $N$ and Ricci bounded below  by $K>0$, then
\begin{equation}\label{eq:LevyGromov}
\frac{|\partial E|}{|M|}\geq \frac{|\partial B|}{|S|},
\end{equation}
where $B$ is a spherical cap in the model sphere $S$, i.e. the $N$-dimensional sphere with constant Ricci curvature equal to $K$,  and $|M|,|S|,|\partial E|, |\partial B|$  denote the appropriate $N$ or $N-1$ dimensional volume, and where $B$ is chosen so that
$|E|/|M|=|B|/|S|$. As $K >0$ both $M$ and $S$ are compact and their volume is finite; hence the previous equality and \eqref{eq:LevyGromov} makes sense.
In other words, the L\'evy-Gromov isoperimetric inequality states that  isoperimetry in $(M,g)$ is at least as strong as in the model space $S$.
\medskip

A general introduction on the isoperimetric problem goes beyond the scopes of this note; here it is worth mentioning 
that a complete description of isoperimetric inequality in spaces admitting singularities is quite an hard task and the bibliography reduces to \cite{MilRot,  MR, MorPol}.
See also \cite[Appendix H]{EiMe} for more details.
We also include the following reference to the isoperimetric problem corresponding to different approaches: for a geometric measure theory approach see \cite{Mor}; 
for the point of view of optimal transport see \cite{FiMP, Vil}; for  the connections with convex and integral geometry see  \cite{BurZal}; 
for the recent quantitative forms see \cite{CL, FuMP} and finally for an overview of the more geometric aspects see \cite{Oss, Rit, Ros}.
\medskip

Coming back to L\'evy-Gromov isoperimetric inequality, it makes sense naturally also in the broader class of metric measure spaces, i.e. triples $(X,\sfd,\mm)$ where  
$(X,\sfd)$ is complete and separable and $\mm$ is a Radon measure over $X$.
Indeed the volume of a Borel set is replaced by its  $\mm$-measure, $\mm(E)$; 
the boundary area of the smooth framework instead can be replaced by the Minkowski content:
\begin{equation}\label{def:MinkCont}
\mm^+(E):=\liminf_{\ve\downarrow 0} \frac{\mm(E^\ve)- \mm(E)}{\ve},
\end{equation}
where $E^{\ve}:=\{x \in X \,:\, \exists y \in  E \, \text{ such that } \, \sfd(x,y)< \ve \}$ is the $\ve$-neighborhood of $E$ with respect to the metric $\sfd$;
the natural analogue of ``dimension $N$ and Ricci bounded below  by $K>0$'' is encoded in the so-called Riemannian Curvature Dimension condition, 
$\RCD^{*}(K,N)$ for short. As normalization factors appears in \eqref{eq:LevyGromov}, it is also more convenient 
to directly consider the case $\mm(X) = 1$.

\medskip

So the L\'evy-Gromov isoperimetric problem for a m.m.s. $(X,\sfd,\mm)$  with $\mm(X) = 1$ can be formulated as follows: \\

\noindent
\emph{ 
Find the  largest function $\cI_{K,N}:[0,1]\to \R^+$ such that for every Borel subset $E\subset X$ it holds 
$$
\mm^{+}(E) \geq \cI_{K,N}(\mm(E)), 
$$
with $\cI_{K,N}$ depending on $N, K \in \R$ with $K>0$ and $N>1$.
}

\medskip
Then in \cite{CM1} (Theorem 1.2) the author with A. Mondino proved the non-smooth L\'evy-Gromov isoperimetric inequality \eqref{eq:LevyGromov}

\begin{theorem}[L\'evy-Gromov in $\RCD^*(K,N)$-spaces, Theorem 1.2 of \cite{CM1}] \label{thm:LG}
Let  $(X,\sfd,\mm)$ be an $\RCD^*(K,N)$ space for some $N\in \N$ and $K>0$ and $\mm(X)=1$. Then for every Borel subset $E\subset X$ it holds
$$
\mm^+(E)\geq  \frac{|\partial B|}{|S|},
$$
where $B$ is a spherical cap in the model sphere $S$ (the $N$-dimensional sphere with constant Ricci curvature equal to $K$) chosen so that $|B|/|S|=\mm(E)$.
\end{theorem}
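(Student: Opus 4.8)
The plan is to run exactly the $L^1$-optimal transport localization described in the Introduction, with a \emph{cleverly chosen} $1$-Lipschitz guiding function, so that the $N$-dimensional isoperimetric inequality is reduced to a single one-dimensional isoperimetric inequality on intervals carrying $\CD(K,N)$ densities, where the sharp lower bound is precisely $|\partial B|/|S|$. First some harmless reductions: one may assume $v:=\mm(E)\in(0,1)$ and $\mm^{+}(E)<\infty$, since otherwise $B$ is empty or all of $S$ and there is nothing to prove; note also that, being $K>0$, the space $(X,\sfd,\mm)$ is compact by the generalized Bonnet--Myers theorem, so every $L^{1}$-transport problem below is well posed. Finally recall that an $\RCD^{*}(K,N)$ space is in particular essentially non-branching and verifies $\CD(K,N)$, which is all the localization machinery needs.

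\textbf{Choice of the guiding function and disintegration.} I would consider $f:=\ind_{E}-v$, so that $\int_{X}f\,d\mm=0$ and $f^{+}\mm$, $f^{-}\mm$ are, up to the common total mass $v(1-v)$, the restrictions of $\mm$ to $E$ and to $X\setminus E$. Let $\f$ be a Kantorovich potential for the $L^{1}$-optimal transport between $f^{+}\mm$ and $f^{-}\mm$, and let $\Gamma$, the transport set $\T$ and the partition $\{X_{q}\}_{q\in Q}$ be the objects associated to $\f$ as in the Introduction; by the Disintegration Theorem one writes $\mm\llcorner\T=\int_{Q}\mm_{q}\,\qq(dq)$ with $\mm_{q}(X_{q})=1$. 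The crucial input — the $\CD(K,N)$ \emph{localization theorem}, valid because the space is essentially non-branching — is that for $\qq$-a.e.\ $q$ the ray $(X_{q},\sfd,\mm_{q})$, once $X_{q}$ is identified isometrically with an interval $I_{q}\subset\R$, has the form $\mm_{q}=h_{q}\,\haus^{1}\llcorner I_{q}$ with $h_{q}$ a $\CD(K,N)$ density. Moreover, since the optimal plan moves mass only along the rays, mass balance holds ray by ray: for $\qq$-a.e.\ $q$ one gets $\int_{X_{q}}f\,d\mm_{q}=0$, i.e.\ $(1-v)\,\mm_{q}(E\cap X_{q})=v\,\mm_{q}(X_{q}\setminus E)$, hence $\mm_{q}(E\cap X_{q})=v$; together with a separate check that the non-transported part $Z=X\setminus\T$ does not affect the estimate, this shows that $E$ has \emph{relative measure exactly $v$ in $\mm$-a.e.\ needle}.

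\textbf{One-dimensional comparison and integrating back.} On a one-dimensional metric measure space $(I,|\cdot|,h\,\haus^{1})$ of total mass $1$, with $h$ a $\CD(K,N)$ density and $K>0$, $N\in\N$, any Borel set of relative measure $v$ has (one-dimensional) Minkowski content at least $\cI_{K,N}(v)$, where $\cI_{K,N}$ is the isoperimetric profile of the round model sphere $S$ and $|B|/|S|=v$; this is the sharp one-dimensional statement, obtained by an explicit optimization over the model densities (the length of $I$ being automatically bounded by the diameter of $S$, again by Bonnet--Myers). To pass back to $X$, observe that $(E\cap X_{q})^{\ve}\subset E^{\ve}\cap X_{q}$ for every $\ve>0$ and every $q$, so that
$$\mm(E^{\ve})-\mm(E)=\int_{Q}\big(\mm_{q}(E^{\ve}\cap X_{q})-\mm_{q}(E\cap X_{q})\big)\,\qq(dq)\ \geq\ \int_{Q}\big(\mm_{q}((E\cap X_{q})^{\ve})-\mm_{q}(E\cap X_{q})\big)\,\qq(dq);$$
dividing by $\ve$, taking the $\liminf$ as $\ve\downarrow0$ and invoking Fatou's lemma together with the ray-wise balance and the one-dimensional inequality yields
$$\mm^{+}(E)\ \geq\ \int_{Q}\mm_{q}^{+}(E\cap X_{q})\,\qq(dq)\ \geq\ \int_{Q}\cI_{K,N}(v)\,\qq(dq)\ =\ \cI_{K,N}(v)\ =\ \frac{|\partial B|}{|S|},$$
which is the assertion.

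\textbf{Where the difficulty lies.} The routine part is the one-dimensional optimization of Step~3. The hard part will be Step~2: proving that the conditional measures $\mm_{q}$ are genuine $\CD(K,N)$ densities on intervals, which requires the full localization theorem and hence a careful measurable selection of the ray map on $\T$ and an essential use of the essentially-non-branching hypothesis; and, still in Step~2, controlling the set $Z$ of non-transported points. A secondary technical point is the interchange of limit and integral in Step~3, i.e.\ the passage from the needle-wise Minkowski contents to the global one, which is why one only obtains a $\liminf$-inequality and must argue via Fatou. Once these are in place, the inequality is exactly the superposition of the sharp one-dimensional bounds.
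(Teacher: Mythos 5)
Your proposal is correct and follows essentially the same route as the paper: the guiding function $f=\ind_{E}-v$, the $\CD(K,N)$ localization along the rays of the $L^{1}$-transport between $f^{+}\mm$ and $f^{-}\mm$ with ray-wise mass balance $\mm_{q}(E\cap X_{q})=v$, the inclusion of the needle-wise $\ve$-neighborhoods into $E^{\ve}$, Fatou, and the sharp one-dimensional comparison with the model densities (which, for $K>0$ and $N\in\N$, is Milman's identification of the profile with that of the round sphere, combined with the smoothing step equating the synthetic and smooth one-dimensional profiles). The only point worth making explicit is your "separate check" on $Z$: since $f$ never vanishes, the localization theorem forces $\mm(Z)=0$, which is exactly how the paper disposes of it.
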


We refer to Theorem 1.2 of \cite{CM1} (or Theorem 6.6) for the more general statement.

\medskip 

The link between Theorem \ref{thm:LG} and the first part of the Introduction, where the Monge problem was discussed, stands in the techniques used to prove Theorem \ref{thm:LG}.

The main obstacle to L\'evy-Gromov type inequalities in the non-smooth metric measure spaces setting is that the 
previously known proofs rely on regularity properties of isoperimetric regions and on powerful results of geometric measure theory (see for instance \cite{Gro,Mor}) 
that are out of disposal in the framework of metric measure spaces.
The recent paper of B. Klartag \cite{klartag} permitted to obtain a proof of the L\'evy-Gromov isoperimetric inequality, still  in the framework of smooth Riemannian manifolds, 
avoiding regularity of optimal shapes and using instead an optimal transportation argument involving $L^1$-Optimal Transportation and ideas of convex geometry. 
This approach goes back to Payne-Weinberger \cite{PW} and was later developed by Gromov-Milman \cite{GrMi}, 
Lov\'asz-Simonovits \cite{LoSi} and Kannan-Lov\'asz-Simonovits \cite{KaLoSi}; 
it consists in reducing a multi-dimensional problem, to easier one-dimensional problems. B. Klartag's contribution was to observe that 
a suitable $L^{1}$-Optimal Transportation problem produces what he calls a \emph{needle decomposition} (in our terminology will be called disintegration) 
that localize (or reduce) the proof of the isoperimetric inequality to the proof of a family of one-dimensional isoperimetric inequalities; 
also the regularity of the space is localized.

The approach of \cite{klartag} does not rely on the regularity of the isoperimetric region, nevertheless it still heavily  
makes use of the smoothness of the ambient space to obtain the localization; in particular
it makes use of sharp properties of the geodesics in terms of Jacobi fields and estimates on the second fundamental forms of suitable level sets, 
all objects that are still not enough understood in general metric measure space in order to repeat the same arguments.

Hence to apply the localization technique to the L\'evy-Gromov isoperimetric inequality in singular spaces,
structural properties of geodesics and of $L^1$-optimal transportation 
have to be understood also in the general framework of metric measure spaces.
Such a program already started in the previous work of the author with S. Bianchini \cite{biacava:streconv} and of the 
author \cite{cava:MongeRCD, cava:decomposition}. 
Finally with A. Mondino in \cite{CM1} we obtained the general result permitting to obtained the L\'evy-Gromov isoperimetric inequality.

\medskip
\subsection{Outline}
The outline of the paper goes as follows:
Section \ref{S:preliminaries} contains all the basic material on Optimal Transportation and the theory of Lott-Sturm-Villani spaces, that is 
metric measure spaces verifying the Curvature Dimension condition, $\CD(K,N)$ for short. It also covers some basics on isoperimetric inequality, 
Disintegration Theorem and selection theorems we will use during the paper.
In Section \ref{S:transportset} we prove all the structure results on the building block 
of $L^{1}$-Optimal Transportation, the $\sfd$-cyclically monotone sets. Here no curvature assumption enters. 
In Section \ref{S:cyclically} we show that the aforementioned sets induce a partition of almost all transport, provided the space
enjoies a stronger form of the essentially non-branching condition; we also show that each element of the partition is a geodesic (and therefore a one-dimensional set).
Section \ref{S:ConditionalMeasures} contains all the regularity results of conditional 
measures of the disintegration induced by the $L^{1}$-Optimal Transportation problem. In particular we will present three assumptions, each one implying the previous one, 
yielding three increasing level of regularity of the conditional measures. Finally in Section \ref{S:application} we collect the 
consequences of the regularity results of Section \ref{S:ConditionalMeasures}; in particular we first show the existence of a 
solution of the Monge problem under very general regularity assumption (Theorem \ref{T:mongeff}) and finally we go back to the L\'evy-Gromov 
isoperimetric inequality (Theorem \ref{T:iso}). 

\bigskip


\section{Preliminaries}\label{S:preliminaries}

In what follows we say that a triple $(X,\sfd, \mm)$ is a metric measure space, m.m.s. for short, 
if $(X, \sfd)$ is a complete and separable metric space and $\mm$ is positive Radon measure over $X$. 
For this paper we will only be concerned with m.m.s. with $\mm$ probability measure, that is $\mm(X) =1$.
The space of all Borel probability measures over $X$ will be denoted by $\mathcal{P}(X)$.

A metric space is a geodesic space if and only if for each $x,y \in X$ 
there exists $\gamma \in \Geo(X)$ so that $\gamma_{0} =x, \gamma_{1} = y$, with
$$
\Geo(X) : = \{ \gamma \in C([0,1], X):  \sfd(\gamma_{s},\gamma_{t}) = |s-t| \sfd(\gamma_{0},\gamma_{1}), \text{ for every } s,t \in [0,1] \}.
$$
It follows from the metric version of the Hopf-Rinow Theorem (see Theorem 2.5.28 of \cite{BBI}) that for complete geodesic spaces,
local completeness is equivalent to properness (a metric space is proper if every closed ball is compact).

So we assume the ambient space $(X,\sfd)$ to be proper and geodesic, hence also complete and separable. 
Moreover we assume $\mm$ to be a proability measure, i.e. $\mm(X)=1$.

\medskip

We denote by $\mathcal{P}_{2}(X)$ the space of probability measures with finite second moment  endowed with the $L^{2}$-Wasserstein distance  $W_{2}$ defined as follows:  for $\mu_0,\mu_1 \in \mathcal{P}_{2}(X)$ we set
\begin{equation}\label{eq:Wdef}
  W_2^2(\mu_0,\mu_1) = \inf_{ \pi} \int_{X\times X} \sfd^2(x,y) \, \pi(dxdy),
\end{equation}
where the infimum is taken over all $\pi \in \mathcal{P}(X \times X)$ with $\mu_0$ and $\mu_1$ as the first and the second marginal, called the set of transference plans.
The set of transference plans realizing the minimum in \eqref{eq:Wdef} will be called the set of optimal transference plans.
Assuming the space $(X,\sfd)$ to be geodesic, also the space $(\mathcal{P}_2(X), W_2)$ is geodesic. 

Any geodesic $(\mu_t)_{t \in [0,1]}$ in $(\mathcal{P}_2(X), W_2)$  can be lifted to a measure $\nu \in {\mathcal {P}}(\Geo(X))$, 
so that $({\rm e}_t)_\sharp \, \nu = \mu_t$ for all $t \in [0,1]$. 
Here for any $t\in [0,1]$,  ${\rm e}_{t}$ denotes the evaluation map: 
$$
  {\rm e}_{t} : \Geo(X) \to X, \qquad {\rm e}_{t}(\gamma) : = \gamma_{t}.
$$

Given $\mu_{0},\mu_{1} \in \mathcal{P}_{2}(X)$, we denote by 
$\Opt(\mu_{0},\mu_{1})$ the space of all $\nu \in \mathcal{P}(\Geo(X))$ for which $({\rm e}_0,{\rm e}_1)_\sharp\, \nu$ 
realizes the minimum in \eqref{eq:Wdef}. If $(X,\sfd)$ is geodesic, then the set  $\Opt(\mu_{0},\mu_{1})$ is non-empty for any $\mu_0,\mu_1\in \mathcal{P}_2(X)$.
It is worth also introducing the subspace of $\mathcal{P}_{2}(X)$
formed by all those measures absolutely continuous with respect with $\mm$: it is denoted by $\mathcal{P}_{2}(X,\sfd,\mm)$.


\subsection{Geometry of metric measure spaces}\label{Ss:geom}
Here we briefly recall the synthetic notions of lower Ricci curvature bounds, for more detail we refer to  \cite{BS10,lottvillani:metric,sturm:I, sturm:II, Vil}.

In order to formulate the curvature properties for $(X,\sfd,\mm)$ we introduce the following distortion coefficients: given two numbers $K,N\in \erre$ with $N\geq0$, we set for $(t,\theta) \in[0,1] \times \erre_{+}$, 
\begin{equation}\label{E:sigma}
\sigma_{K,N}^{(t)}(\theta):= 
\begin{cases}
\infty, & \textrm{if}\ K\theta^{2} \geq N\pi^{2}, \crcr
\displaystyle  \frac{\sin(t\theta\sqrt{K/N})}{\sin(\theta\sqrt{K/N})} & \textrm{if}\ 0< K\theta^{2} <  N\pi^{2}, \crcr
t & \textrm{if}\ K \theta^{2}<0 \ \textrm{and}\ N=0, \ \textrm{or  if}\ K \theta^{2}=0,  \crcr
\displaystyle   \frac{\sinh(t\theta\sqrt{-K/N})}{\sinh(\theta\sqrt{-K/N})} & \textrm{if}\ K\theta^{2} \leq 0 \ \textrm{and}\ N>0.
\end{cases}
\end{equation}

We also set, for $N\geq 1, K \in \R$ and $(t,\theta) \in[0,1] \times \erre_{+}$
\begin{equation} \label{E:tau}
\tau_{K,N}^{(t)}(\theta): = t^{1/N} \sigma_{K,N-1}^{(t)}(\theta)^{(N-1)/N}.
\end{equation}

%
%
%
%
%
%
%
%
%

As we will consider only the case of essentially non-branching spaces, we recall the following definition. 
\begin{definition}\label{D:essnonbranch}
A metric measure space $(X,\sfd, \mm)$ is \emph{essentially non-branching} if and only if for any $\mu_{0},\mu_{1} \in \mathcal{P}_{2}(X)$,
with $\mu_{0}$ absolutely continuous with respect to $\mm$, any element of $\Opt(\mu_{0},\mu_{1})$ is concentrated on a set of non-branching geodesics.
\end{definition}

A set $F \subset \Geo(X)$ is a set of non-branching geodesics if and only if for any $\gamma^{1},\gamma^{2} \in F$, it holds:
$$
\exists \;  \bar t\in (0,1) \text{ such that } \ \forall t \in [0, \bar t\,] \quad  \gamma_{ t}^{1} = \gamma_{t}^{2}   
\quad 
\Longrightarrow 
\quad 
\gamma^{1}_{s} = \gamma^{2}_{s}, \quad \forall s \in [0,1].
$$

\begin{definition}[$\CD$ condition]\label{D:CD}
An essentially non-branching m.m.s. $(X,\sfd,\mm)$ verifies $\mathsf{CD}(K,N)$  if and only if for each pair 
$\mu_{0}, \mu_{1} \in \mathcal{P}_{2}(X,\sfd,\mm)$ there exists $\nu \in \Opt(\mu_{0},\mu_{1})$ such that
\begin{equation}\label{E:CD}
\r_{t}^{-1/N} (\gamma_{t}) \geq  \tau_{K,N}^{(1-t)}(\sfd( \gamma_{0}, \gamma_{1}))\r_{0}^{-1/N}(\gamma_{0}) 
 + \tau_{K,N}^{(t)}(\sfd(\gamma_{0},\gamma_{1}))\r_{1}^{-1/N}(\gamma_{1}), \qquad \nu\text{-a.e.} \, \gamma \in \Geo(X),
\end{equation}
for all $t \in [0,1]$, where $({\rm e}_{t})_\sharp \, \nu = \r_{t} \mm$.
\end{definition}

For the general definition of $\CD(K,N)$ see \cite{lottvillani:metric, sturm:I, sturm:II}.

\begin{remark}\label{R:CDN-1}

It is worth recalling that if $(M,g)$ is a Riemannian manifold of dimension $n$ and 
$h \in C^{2}(M)$ with $h > 0$, then the m.m.s. $(M,g,h \, vol)$ verifies $\CD(K,N)$ with $N\geq n$ if and only if  (see Theorem 1.7 of \cite{sturm:II})
$$
Ric_{g,h,N} \geq  K g, \qquad Ric_{g,h,N} : =  Ric_{g} - (N-n) \frac{\nabla_{g}^{2} h^{\frac{1}{N-n}}}{h^{\frac{1}{N-n}}}.  
$$
In particular if $N = n$ the generalized Ricci tensor $Ric_{g,h,N}= Ric_{g}$ makes sense only if $h$ is constant. 

Another important case is when $I \subset \R$ is any interval, $h \in C^{2}(I)$ 
and $\mathcal{L}^{1}$ is the one-dimensional Lebesgue measure; then the m.m.s. $(I ,|\cdot|, h \mathcal{L}^{1})$ verifies $\CD(K,N)$ if and only if  
\begin{equation}\label{E:CD-N-1}
\left(h^{\frac{1}{N-1}}\right)'' + \frac{K}{N-1}h^{\frac{1}{N-1}} \leq 0,
\end{equation}
and verifies $\CD(K,1)$ if and only if $h$ is constant. Inequality \eqref{E:CD-N-1} has also a non-smooth counterpart; if we drop the smoothness assumption on $h$ 
it can be proven that the m.m.s. $(I ,|\cdot|, h \mathcal{L}^{1})$ verifies $\CD(K,N)$ if and only if   
\begin{equation}\label{E:curvdensmmR}
h( (1-s)  t_{0}  + s t_{1} )^{1/(N-1)}  
 \geq \sigma^{(1-s)}_{K,N-1}(t_{1} - t_{0}) h (t_{0})^{1/(N-1)} + \sigma^{(s)}_{K,N-1}(t_{1} - t_{0}) h (t_{1})^{1/(N-1)},
\end{equation}
that is the formulation in the sense of distributions of the  differential inequality
$$
\left(h^{\frac{1}{N-1}}\right)'' + \frac{K}{N-1}h^{\frac{1}{N-1}} \leq 0.
$$
Recall indeed that $s \mapsto \sigma^{(s)}_{K,N-1}(\theta)$ solves in the classical sense $f'' + (t_{1}-t_{0})^{2} \frac{K}{N-1}f = 0$.
\end{remark}

%

We also mention the more recent Riemannian curvature dimension condition $\RCD^{*}(K,N)$. In the infinite dimensional case, i.e. $N = \infty$, it was introduced \cite{AGS11b}. 
The class $\RCD^{*}(K,N)$ with $N<\infty$ has been proposed in \cite{G15} and deeply investigated 
in \cite{AGS, EKS} and \cite{AMS}. We refer to these papers and references therein for a general account 
on the synthetic formulation of Ricci curvature lower bounds for metric measure spaces. 

Here we only mention that $\RCD^{*}(K,N)$ condition 
is an enforcement of the so called reduced curvature dimension condition, denoted by $\CD^{*}(K,N)$, that has been introduced in \cite{BS10}: 
in particular the additional condition is that the Sobolev space $W^{1,2}(X,\mm)$ is an Hilbert space, see \cite{G15, AGS11a, AGS11b}.

The reduced $\CD^{*}(K,N)$ condition asks for the same inequality \eqref{E:CD} of $\CD(K,N)$ but  the
coefficients $\tau_{K,N}^{(t)}(\sfd(\gamma_{0},\gamma_{1}))$ and $\tau_{K,N}^{(1-t)}(\sfd(\gamma_{0},\gamma_{1}))$ 
are replaced by $\sigma_{K,N}^{(t)}(\sfd(\gamma_{0},\gamma_{1}))$ and $\sigma_{K,N}^{(1-t)}(\sfd(\gamma_{0},\gamma_{1}))$, respectively.

Hence while the distortion coefficients of the $\CD(K,N)$ condition 
are formally obtained imposing one direction with linear distortion and $N-1$ directions affected by curvature, 
the $\CD^{*}(K,N)$ condition imposes the same volume distortion in all the $N$ directions.

For both definitions there is a local version that is of some relevance for our analysis. Here we state only the local formulation $\mathsf{CD}(K,N)$, 
being clear what would be the one for $\mathsf{CD}^{*}(K,N)$.

\begin{definition}[$\CD_{loc}$ condition]\label{D:loc}
An essentially non-branching m.m.s. $(X,\sfd,\mm)$ satisfies $\CD_{loc}(K,N)$ if for any point $x \in X$ there exists a neighborhood $X(x)$ of $x$ such that for each pair 
$\mu_{0}, \mu_{1} \in \mathcal{P}_{2}(X,\sfd,\mm)$ supported in $X(x)$
there exists $\nu \in \Opt(\mu_{0},\mu_{1})$ such that \eqref{E:CD} holds true for all $t \in [0,1]$.
The support of $({\rm e}_{t})_\sharp \, \nu$ is not necessarily contained in the neighborhood $X(x)$.
\end{definition}

One of the main properties of the reduced curvature dimension condition is the globalization one:  
under the essentially non-branching property,  $\mathsf{CD}^{*}_{loc}(K,N)$ and $\mathsf{CD}^{*}(K,N)$ are equivalent (see \cite[Corollary 5.4]{BS10}), i.e. the 
$\mathsf{CD}^{*}$-condition verifies the local-to-global property.

We also recall a few relations between $\CD$ and $\CD^{*}$.
It is known by \cite[Theorem 2.7]{GigliMap} that, if $(X,\sfd,\mm)$ is a non-branching metric measure space 
verifying $\CD(K,N)$ and $\mu_{0}, \mu_{1} \in \mathcal{P}(X)$ with $\mu_{0}$ absolutely continuous with respect to $\mm$, 
then there exists a unique optimal map $T : X \to X$ such $(id, T)_\sharp\, \mu_{0}$ realizes the minimum in \eqref{eq:Wdef} and the set 
$\Opt(\mu_{0},\mu_{1})$ contains only one element. The same proof holds if one replaces the non-branching assumption with the more general 
one of essentially non-branching, see for instance \cite{GRS2013}.

\bigskip

\subsection{Isoperimetric profile function}

Given a m.m.s. $(X,\sfd,\mm)$ as above and  a Borel subset $A\subset X$, let $A^{\ve}$ denote the $\ve$-tubular neighborhood 
$$
A^{\ve}:=\{x \in X \,:\, \exists y \in A \text{ such that } \sfd(x,y) < \ve \}. 
$$
The Minkowski (exterior) boundary measure $\mm^+(A)$  is defined by
\begin{equation}\label{eq:MinkCont}
\mm^+(A):=\liminf_{\ve\downarrow 0} \frac{\mm(A^\ve)-\mm(A)}{\ve}.
\end{equation}
The \emph{isoperimetric profile}, denoted by  ${\cI}_{(X,\sfd,\mm)}$, is defined as the point-wise maximal function so that $\mm^+(A)\geq \cI_{(X,\sfd,\mm)}(\mm(A))$  
for every Borel set $A \subset X$, that is
\begin{equation}\label{E:profile}
\cI_{(X,\sfd,\mm)}(v) : = \inf \big\{ \mm^{+}(A) \colon A \subset X \, \textrm{ Borel}, \, \mm(A) = v   \big\}.
\end{equation}

If $K>0$ and $N\in \N$, by the L\'evy-Gromov isoperimetric inequality \eqref{eq:LevyGromov} we know that, for $N$-dimensional smooth manifolds having Ricci $\geq K$, 
the isoperimetric profile function is bounded below by the one of the $N$-dimensional round sphere of the suitable radius. 
In other words  the \emph{model} isoperimetric profile function is the one of ${\mathbb S}^N$. 
For $N\geq 1, K\in \R$ arbitrary real numbers the situation is more complicated, and just recently E. Milman \cite{Mil} discovered what is the model isoperimetric profile. 
We refer to \cite{Mil} for all the details. Here we just recall the relevance of isoperimetric profile functions for m.m.s. over $(\R, |\cdot|)$:
given $K\in \R, N\in[1,+\infty)$ and $D\in (0,+\infty]$, consider the function
\begin{equation}\label{defcI}
\mathcal{I}_{K,N,D}(v) : = \inf \left\{ \mu^{+}(A) \colon A\subset \R, \,\mu(A) = v, \, \mu \in \mathcal{F}_{K,N,D}  \right\},
\end{equation}
where  $\mathcal{F}_{K,N,D}$ denotes the set of $\mu \in \mathcal{P}(\R)$ such that  $\supp(\mu) \subset [0,D]$   
and $\mu = h \cdot \mathcal{L}^{1}$ with $h \in C^{2}((0,D))$ satisfying
\begin{equation}\label{eq:DiffIne}
\left( h^{\frac{1}{N-1}} \right)'' + \frac{K}{N-1} h^{\frac{1}{N-1}} \leq 0 \quad \text{if }N \in (1,\infty), \quad h\equiv \textrm{const} \quad \text{if }N=1.
\end{equation}
Then from \cite[Theorem 1.2, Corollary 3.2]{Mil} it follows that for $N$-dimensional smooth manifolds having Ricci $\geq K$, with $K\in \R$ 
arbitrary real number, and diameter $D$,  the isoperimetric profile function is bounded below by $\mathcal{I}_{K,N,D}$ and the bound is sharp.
This also justifies the notation.

\medskip

Going back to non-smooth metric measure spaces (what follows is taken from \cite{CM1}), it is necessary to consider the following broader family of measures:
\begin{eqnarray}
\mathcal{F}^{s}_{K,N,D} : = \{ \mu \in \mathcal{P}(\R) : &\supp(\mu) \subset [0,D], \, \mu = h_{\mu} \mathcal{L}^{1},\,
h_{\mu}\, \textrm{verifies} \, \eqref{E:curvdensmmR} \ \textrm{and is continuous if } N\in (1,\infty), \nonumber \\ 
& \quad h_{\mu}\equiv \textrm{const} \text{ if }N=1   \},
\end{eqnarray}
and  the corresponding  comparison \emph{synthetic} isoperimetric profile:  
$$
\mathcal{I}^{s}_{K,N,D}(v) : = \inf \left\{ \mu^{+}(A) \colon A\subset \R, \,\mu(A) = v, \, \mu \in \mathcal{F}^{s}_{K,N,D}  \right\},
$$
where $\mu^{+}(A)$ denotes the Minkowski content defined  in  \eqref{eq:MinkCont}.
The term synthetic refers to $\mu \in \mathcal{F}^{s}_{K,N,D}$ meaning that the Ricci curvature bound is satisfied in its synthetic formulation:
if $\mu = h \cdot \mathcal{L}^{1}$, then $h$ verifies \eqref{E:curvdensmmR}.

\medskip

We have already seen that $\mathcal{F}_{K,N,D} \subset \mathcal{F}^{s}_{K,N,D}$; actually one can prove  that 
$\mathcal{I}^{s}_{K,N,D}$ coincides with  its smooth counterpart $\mathcal{I}_{K,N,D}$ for every volume $v \in [0,1]$ via a smoothing argument. 
We therefore need the following approximation result. In order to state it let us recall that a standard mollifier in $\R$ is a non negative $C^\infty(\R)$ 
function $\psi$ with compact support in $[0,1]$ such  that $\int_{\R} \psi = 1$. 
\begin{lemma}[Lemma 6.2, \cite{CM1}]\label{lem:approxh}
Let  $D \in (0,\infty)$ and let  $h:[0,D] \to [0,\infty)$ be a continuous function. Fix $N\in (1,\infty)$ and for $\ve>0$ define
\begin{equation}
h_{\ve}(t):=[h^{\frac{1}{N-1}}\ast \psi_{\ve} (t)]^{N-1}  := \left[ \int_{\R} h(t-s)^{\frac{1}{N-1}}  \psi_{\ve} (s) \, d s\right]^{N-1} 
										=  \left[ \int_{\R} h(s)^{\frac{1}{N-1}}  \psi_{\ve} (t-s) \, d s\right]^{N-1},
\end{equation}
where $\psi_\ve(x)=\frac{1}{\ve} \psi(x/\ve)$ and $\psi$ is a standard mollifier function. The following properties hold:
\begin{enumerate}
	\item $h_{\ve}$ is a non-negative $C^\infty$ function with support in $[-\ve, D+\ve]$; \medskip
	\item $h_{\ve}\to h$ uniformly  as $\ve \downarrow 0$, in particular $h_{\ve} \to h$ in $L^{1}$. \medskip
	\item If $h$ satisfies the convexity condition \eqref{E:curvdensmm} corresponding to the above fixed $N>1$ 
		and some $K \in \R$ then also $h_{\ve}$ does. In particular $h_{\ve}$ satisfies the differential inequality \eqref{eq:DiffIne}.
\end{enumerate}
\end{lemma}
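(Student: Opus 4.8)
The plan is to verify the three properties in order, since each is essentially a standard fact about convolution with a mollifier adapted to the nonlinear quantity $h^{1/(N-1)}$. Throughout, write $g := h^{1/(N-1)}$, so that $g$ is continuous on $[0,D]$ (extended by $0$ outside, or rather: we should first note that the two integral expressions for $h_\ve$ agree by the change of variables $s \mapsto t-s$ and the fact that $\psi_\ve$ is supported in $[0,\ve]$, hence $g_\ve := g \ast \psi_\ve = h^{1/(N-1)}\ast\psi_\ve$ makes sense and $h_\ve = g_\ve^{N-1}$).

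For property (1), I would argue that $g_\ve = g\ast\psi_\ve$ is $C^\infty$ because differentiation under the integral sign passes all derivatives onto the smooth compactly supported factor $\psi_\ve$; non-negativity of $g_\ve$ is immediate from $g,\psi_\ve \geq 0$, and then $h_\ve = g_\ve^{N-1} \geq 0$ is $C^\infty$ as a power of a smooth non-negative function (here one uses $N>1$; no issue at zeros since we raise to a positive power and the composition $x\mapsto x^{N-1}$ is smooth on $[0,\infty)$ away from $0$, and near points where $g_\ve=0$ one notes $h_\ve$ is still smooth because... actually the cleanest route is: $g_\ve$ is smooth, and wherever $g_\ve>0$ the power is smooth, while the set $\{g_\ve=0\}$ has to be handled — but in fact it is standard that the convolution of a non-negative function with a strictly positive mollifier on its support is either identically zero or strictly positive on the interior of the relevant interval, so on the open interval $(-\ve,D+\ve)$ we may assume $g_\ve>0$ unless $h\equiv 0$, in which case the statement is trivial). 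The support statement $\supp h_\ve \subset [-\ve, D+\ve]$ follows from $\supp g \subset [0,D]$ and $\supp \psi_\ve \subset [0,\ve]$ together with $\supp(f\ast\psi_\ve)\subset \supp f + \supp\psi_\ve$.

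For property (2), uniform convergence $g_\ve \to g$ is the classical statement that convolving a uniformly continuous function (which $g$ is, being continuous on a compact interval and extended suitably) with an approximate identity converges uniformly; then $h_\ve = g_\ve^{N-1} \to g^{N-1} = h$ uniformly because $x\mapsto x^{N-1}$ is uniformly continuous on the bounded set containing all the $g_\ve$ (they are uniformly bounded by $\|g\|_\infty$). Uniform convergence on a bounded interval gives $L^1$ convergence immediately.

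For property (3), this is the heart of the matter and the step I expect to require the most care. One must show that if $g = h^{1/(N-1)}$ satisfies the concavity-type inequality \eqref{E:curvdensmmR} — equivalently, in the sense of distributions, $g'' + \tfrac{K}{N-1} g \leq 0$ — then $g_\ve = g\ast\psi_\ve$ satisfies the same inequality. The mechanism: \eqref{E:curvdensmmR} says precisely that $g$ lies above the "chord" built from the functions $\sigma^{(\cdot)}_{K,N-1}$, and since $s\mapsto \sigma^{(s)}_{K,N-1}(\theta)$ solves the linear ODE $f'' + \theta^2\tfrac{K}{N-1} f = 0$, the inequality \eqref{E:curvdensmmR} is stable under averaging: convolving both sides against the non-negative kernel $\psi_\ve(s)\,ds$ and using translation invariance of the differential operator $\tfrac{d^2}{dt^2} + \tfrac{K}{N-1}$ preserves the inequality. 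Concretely, I would write \eqref{E:curvdensmmR} for the translated function $t\mapsto g(t - s)$, multiply by $\psi_\ve(s)\geq 0$, integrate in $s$, and use linearity to conclude that $g_\ve$ satisfies \eqref{E:curvdensmmR}; since $g_\ve$ is now smooth, this is equivalent to $g_\ve'' + \tfrac{K}{N-1} g_\ve \leq 0$ in the classical sense, i.e. $h_\ve$ satisfies \eqref{eq:DiffIne}. The only subtlety is bookkeeping with the parameter $\theta = t_1 - t_0$ in $\sigma^{(s)}_{K,N-1}(\theta)$ when translating, but this is routine since the inequality \eqref{E:curvdensmmR} holds for all pairs $t_0 \leq t_1$ in the relevant interval and translation merely shifts the pair. (Here I note the excerpt writes \eqref{E:curvdensmm} in the statement, presumably a typo for \eqref{E:curvdensmmR}, the displayed convexity condition from Remark~\ref{R:CDN-1}.)
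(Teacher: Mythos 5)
The survey itself does not reproduce the proof of Lemma~\ref{lem:approxh} (it is quoted from \cite{CM1}), so I am judging your argument against the intended one. Your overall strategy is exactly right: work with $g:=h^{1/(N-1)}$, observe that \eqref{E:curvdensmmR} is \emph{linear} in $g$ and that its coefficients $\sigma^{(s)}_{K,N-1}(t_{1}-t_{0})$ depend only on the increment $t_{1}-t_{0}$, hence the condition is preserved under positive superpositions of translates, i.e.\ under convolution with $\psi_{\ve}$. But there is one step that fails as written. In part (3) you dismiss the translation bookkeeping as routine ``since the inequality holds for all pairs $t_{0}\leq t_{1}$ in the relevant interval and translation merely shifts the pair''. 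It is not routine: the inequality for the translate $t\mapsto g(t-u)$ is only available when both $t_{0}-u$ and $t_{1}-u$ lie in $[0,D]$, and since you need this for \emph{every} $u\in\supp\psi_{\ve}\subset[0,\ve]$ simultaneously, the convolution argument yields \eqref{E:curvdensmm} for $g_{\ve}$ only for $t_{0},t_{1}\in[\ve,D]$. Outside that range the conclusion is genuinely false: for $K=0$ the condition is concavity of $g$, and for $h\equiv 1$ one gets $g_{\ve}(t)=\int_{0}^{t}\psi_{\ve}$ on $[0,\ve]$, the distribution function of $\psi_{\ve}$, which is convex near $0$ for any standard bump. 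So the statement must be used on the restricted interval $[\ve,D]$ (which is what the application to Theorem~\ref{thm:I=Is} actually requires, after restricting and renormalizing $h_{\ve}$); your proof needs to record this restriction rather than claim the condition on all of $\supp h_{\ve}$.

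Two smaller points. In part (1) the dichotomy ``the convolution is either identically zero or strictly positive on the interior'' is false: if $g$ vanishes on a subinterval of $[0,D]$ of length larger than $\ve$, then $g_{\ve}$ vanishes on a nontrivial interior set while being positive elsewhere, so the smoothness of $h_{\ve}=g_{\ve}^{N-1}$ at interior zeros of $g_{\ve}$ (problematic when $N-1\notin\N$) is not actually dispatched by your argument; this is harmless for the intended use, where only the smoothness of $h_{\ve}^{1/(N-1)}=g_{\ve}$ enters \eqref{eq:DiffIne}, but as a proof of item (1) it is incomplete. Similarly, in (2) the zero extension of $g$ is discontinuous at $0$ and $D$ unless $h$ vanishes there, so uniform convergence only holds away from the endpoints; the $L^{1}$ convergence, which is what is used, is of course fine.
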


Using this approximation one can prove the following 
\medskip

\begin{theorem}[Theorem 6.3, \cite{CM1}]\label{thm:I=Is}
For every $v\in [0,1]$, $K \in \R$, $N\in [1,\infty)$, $D\in (0,\infty]$ it holds $\mathcal{I}^{s}_{K,N,D}(v)=\mathcal{I}_{K,N,D}(v)$.
\end{theorem}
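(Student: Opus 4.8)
The easy half of the identity is the inequality $\mathcal{I}^{s}_{K,N,D}(v)\le \mathcal{I}_{K,N,D}(v)$, which is immediate from the inclusion $\mathcal{F}_{K,N,D}\subseteq\mathcal{F}^{s}_{K,N,D}$ recalled above: enlarging the class of admissible measures in the definition of the profile can only lower the infimum. Hence the whole content is the reverse inequality, and the plan is to show that \emph{every} competitor for $\mathcal{I}^{s}_{K,N,D}(v)$ is, up to an arbitrarily small error, a competitor for $\mathcal{I}_{K,N,D}(v)$. Concretely, I would fix $\mu=h\mathcal{L}^{1}\in\mathcal{F}^{s}_{K,N,D}$ and a Borel set $A$ with $\mu(A)=v$, and for every $\delta>0$ produce $\bar\mu\in\mathcal{F}_{K,N,D}$ and a set $\bar A$ with $\bar\mu(\bar A)=v$ and $\bar\mu^{+}(\bar A)\le\mu^{+}(A)+\delta$; this gives $\mathcal{I}_{K,N,D}(v)\le\mu^{+}(A)$, and taking the infimum over $(\mu,A)$ yields $\mathcal{I}_{K,N,D}(v)\le\mathcal{I}^{s}_{K,N,D}(v)$. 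The trivial cases $N=1$ (where $h$ is constant and already $\mathcal{F}_{K,1,D}=\mathcal{F}^{s}_{K,1,D}$ up to the obvious identifications) and $v\in\{0,1\}$ would be handled separately, so assume $N\in(1,\infty)$ and $v\in(0,1)$.

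\emph{Step 1: reduction to nice competitor sets.} Since the Minkowski content behaves badly under approximation of measures, I would first replace $A$ by a finite union of intervals, using the classical structure of the one-dimensional isoperimetric problem for a measure with continuous density on an interval (the infimum is approximately realised by a finite union of intervals — in fact by a half-line, an interval, or the complement of an interval). Thus one may assume $A$ is a finite union of intervals contained in $\supp\mu=[\alpha,\beta]$, and then $\mu^{+}(A)=\sum_{x}h(x)$, the sum being over the finitely many endpoints of these intervals that lie in the \emph{interior} of $[\alpha,\beta]$ (endpoints at, or outside, the support carry no Minkowski boundary). This is the step on which the whole argument hinges.

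\emph{Step 2: push the support strictly inside $[0,D]$ and mollify.} The two stability properties of condition \eqref{E:curvdensmmR} that I would exploit are that it is inherited by the restriction of $h$ to any subinterval and by multiplication of $h$ by a positive constant (the inequality being linear in $h^{1/(N-1)}$). So, for small $\eta>0$, the normalised restriction $\mu^{(\eta)}:=\mu([\eta,D-\eta])^{-1}\,(h\,\ind_{[\eta,D-\eta]})\,\mathcal{L}^{1}$ still lies in $\mathcal{F}^{s}_{K,N,D}$, has support compactly contained in $(0,D)$, and $\mu^{(\eta)}\to\mu$ in total variation as $\eta\downarrow0$ (because $\mu([\eta,D-\eta])\to 1$). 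Replacing $A$ by $A\cap[\eta,D-\eta]$ and sliding one endpoint by an $o(1)$ amount to restore the mass $v$, one obtains a finite union of intervals $A^{(\eta)}$ with $\mu^{(\eta)}(A^{(\eta)})=v$ and $(\mu^{(\eta)})^{+}(A^{(\eta)})\to\mu^{+}(A)$ — here it is essential that the newly created endpoints do not lie in the interior of $\supp\mu^{(\eta)}$, hence contribute nothing to the content. Next I would apply Lemma \ref{lem:approxh} to $h^{(\eta)}$ on $[\eta,D-\eta]$: for $\ve<\eta$ it yields $h^{(\eta)}_{\ve}\in C^{\infty}$, supported in $(0,D)$, converging uniformly to $h^{(\eta)}$ and still satisfying the differential inequality \eqref{eq:DiffIne}, so that after normalisation $\mu^{(\eta)}_{\ve}\in\mathcal{F}_{K,N,D}$. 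Adjusting once more one endpoint of $A^{(\eta)}$ to restore mass $v$ for $\mu^{(\eta)}_{\ve}$, the uniform convergence $h^{(\eta)}_{\ve}\to h^{(\eta)}$ forces the finitely many relevant point values of the density, hence $(\mu^{(\eta)}_{\ve})^{+}(A^{(\eta)}_{\ve})$, to converge to $(\mu^{(\eta)})^{+}(A^{(\eta)})$ as $\ve\downarrow0$.

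\emph{Conclusion and main obstacle.} Since $\mu^{(\eta)}_{\ve}\in\mathcal{F}_{K,N,D}$ with $\mu^{(\eta)}_{\ve}(A^{(\eta)}_{\ve})=v$, the definition of $\mathcal{I}_{K,N,D}$ gives $\mathcal{I}_{K,N,D}(v)\le(\mu^{(\eta)}_{\ve})^{+}(A^{(\eta)}_{\ve})$; letting $\ve\downarrow0$ and then $\eta\downarrow0$ yields $\mathcal{I}_{K,N,D}(v)\le\mu^{+}(A)$, and the infimum over $(\mu,A)$ finishes the proof (the case $D=\infty$ is the same but one only trims the support on the left, so no analogue of the parameter $D$ is perturbed). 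I expect the main obstacle to be precisely Step 1 together with the care required in Step 2: the Minkowski content is not continuous under weak or $L^{1}$ convergence of measures, so one must first cut the problem down to competitor sets for which the content is a finite sum of point values of the density, and one must perform the approximation \emph{inside} $[0,D]$ — which is exactly what the chopping in Step 2 achieves — so as not to leave the class $\mathcal{F}_{K,N,D}$ or alter its parameters.
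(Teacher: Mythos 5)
Your overall architecture is exactly the smoothing argument the paper intends: the inclusion $\mathcal{F}_{K,N,D}\subseteq\mathcal{F}^{s}_{K,N,D}$ gives the trivial inequality, and Lemma \ref{lem:approxh} supplies the smooth competitors for the converse. Your two structural moves --- reducing to competitor sets that are finite unions of intervals, so that the Minkowski content becomes a finite sum of point values of the (continuous) density, and trimming the support to $[\eta,D-\eta]$ before mollifying so as to remain inside $[0,D]$ without perturbing the parameter $D$ --- are the right way to tame the instability of the Minkowski content, and you correctly identify them as the crux. (Step 1 is asserted rather than proved: it requires the inequality between Minkowski content and weighted perimeter together with the structure theorem for one-dimensional sets of finite perimeter, and one must check that the chosen representative has the same $\mu$-measure; this is standard but should be said.)

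There is one step that fails as written. After mollification the support of the density \emph{grows}: $\supp h^{(\eta)}_{\ve}$ is an $\ve$-enlargement of $\supp h^{(\eta)}$. Consequently an endpoint of $A^{(\eta)}$ sitting \emph{on} the boundary of $\supp\mu^{(\eta)}$ --- for instance the point $\eta$ created by the truncation, or an original endpoint of $A$ lying at the edge of $\supp\mu$ --- contributes nothing to $(\mu^{(\eta)})^{+}(A^{(\eta)})$, but after mollification it becomes an \emph{interior} point of $\supp h^{(\eta)}_{\ve}$ and contributes approximately $h^{(\eta)}_{\ve}$ evaluated there, which converges to a value of $h$ at the edge of its support and need not vanish. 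So the claimed convergence $(\mu^{(\eta)}_{\ve})^{+}(A^{(\eta)}_{\ve})\to(\mu^{(\eta)})^{+}(A^{(\eta)})$ is false in general: the left-hand side can exceed the right by the (possibly positive) boundary values of $h$. The fix is short and should be stated: every interval of $A^{(\eta)}$ whose endpoint lies on $\partial\,\supp\mu^{(\eta)}$ must be extended all the way to the corresponding endpoint of $\supp h^{(\eta)}_{\ve}$; this costs $O(\ve)$ in mass, which is absorbed by the same endpoint-sliding you already use, and it keeps such endpoints on the boundary of the support of the mollified density, where they continue to contribute zero. With this correction the argument closes.
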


\medskip

\subsection{Disintegration of measures}
We include here a version of Disintegration Theorem that we will use. 
We will follow Appendix A of \cite{biacara:extreme} where a  self-contained approach (and a proof) of Disintegration Theorem in countably generated measure spaces can be found. 
An even more general version of Disintegration Theorem can be found in Section 452 of \cite{Fre:measuretheory4}.

Recall that a $\sigma$-algebra is \emph{countably generated} if there exists a countable family of sets so that 
the $\sigma$-algebra  coincide with the smallest $\sigma$-algebra containing them.

\medskip

Given a measurable space $(X, \mathscr{X})$, i.e.  $\mathscr{X}$ is  a $\sigma$-algebra of subsets of $X$, 
and a function $\QQ : X \to Q$, with $Q$ general set, we can endow $Q$ with the \emph{push forward $\sigma$-algebra} $\mathscr{Q}$ of $\mathscr{X}$:
$$
C \in \mathscr{Q} \quad \Longleftrightarrow \quad \QQ^{-1}(C) \in \mathscr{X},
$$
which could be also defined as the biggest $\sigma$-algebra on $Q$ such that $\QQ$ is measurable. 
Moreover given a probability measure  $\mm$  on $(X,\mathscr{X})$, define a probability  
measure $\qq$ on $(Q,\mathscr{Q})$  by push forward via $\QQ$, i.e. $\qq := \QQ_\sharp \, \mm$.

This general scheme fits with the following situation: given a measure space $(X,\mathscr{X},\mm)$, suppose 
a partition of $X$ is given in the form $\{ X_{q}\}_{q\in Q}$, $Q$ is the set of indices and $\QQ : X \to Q$ is the quotient map, i.e.  
$$
q = \QQ(x) \iff x \in X_{q}.
$$
Following the previous scheme, we can consider also the quotient $\sigma$-algebra $\mathscr{Q}$ and the quotient measure $\qq$ obtaining 
the quotient measure space $(Q, \mathscr{Q}, \qq)$. 

\medskip

\begin{definition}
\label{defi:dis}
A \emph{disintegration} of $\mm$ \emph{consistent with} $\QQ$ is a map 
$$
Q \ni q \longmapsto \mm_{q} \in \mathcal{P}(X,\mathscr{X})
$$
such that the following hold:
\begin{enumerate}
\item  for all $B \in \mathscr{X}$, the map $\mm_{\cdot}(B)$ is $\qq$-measurable;
\item for all $B \in \mathscr{X}, C \in \mathscr{Q}$ satisfies the consistency condition
$$
\mm \left(B \cap \QQ^{-1}(C) \right) = \int_{C} \mm_{q}(B)\, \qq(dq).
$$
\end{enumerate}
A disintegration is \emph{strongly consistent with respect to $\QQ$} if for $\qq$-a.e. $q \in Q$ we have $\mm_{q}(\QQ^{-1}(q))=1$.
The measures $\mm_q$ are called \emph{conditional probabilities}.
\end{definition}

When the map $\QQ$ is induced by a partition of $X$ as before, we will directly say that the disintegration is consistent with the partition, meaning that 
the disintegration is consistent with the quotient map $\QQ$ associated to the partition $X = \cup_{q\in Q} X_{q}$.

We now report Disintegration Theorem.

\medskip
\begin{theorem}[Theorem A.7, Proposition A.9 of \cite{biacara:extreme}]\label{T:disintegrationgeneral}
\label{T:disintr}
Assume that $(X,\mathscr{X},\rho)$ is a countably generated probability space and  $X = \cup_{q \in Q}X_{q}$ is a partition of $X$. 
\medskip

Then the quotient probability space $(Q, \mathscr{Q},\qq)$ is essentially countably generated and 
there exists a unique disintegration $q \mapsto \mm_{q}$ consistent with the partition $X = \cup_{q\in Q} X_{q}$.

\medskip

The disintegration is strongly consistent if and only if there exists a $\mm$-section $S \in \mathscr{X}$ such that the $\sigma$-algebra $\mathscr{S}$ contains $\mathcal{B}(S)$. 
\end{theorem}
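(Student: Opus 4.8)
The plan is to follow the self-contained treatment of Appendix A of \cite{biacara:extreme}, reducing everything to the combinatorics of a countable generating family together with the Polish/Radon structure of $(X,\sfd,\mm)$.

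\emph{Essential countable generation of the quotient.} First I would note that $\QQ^{-1}(\mathscr Q)\subset\mathscr X$ is exactly the $\sigma$-algebra of $\QQ$-saturated sets, and that $\QQ^{-1}$ is a $\sigma$-algebra isomorphism of $(\mathscr Q,\qq)$ onto $(\QQ^{-1}(\mathscr Q),\mm)$. Since $\mathscr X$ is countably generated, $L^{2}(X,\mathscr X,\mm)$ is separable, hence so is its closed subspace $L^{2}(X,\QQ^{-1}(\mathscr Q),\mm)$ (the range of the conditional expectation onto $\QQ^{-1}(\mathscr Q)$). A countable dense family $\{f_{n}\}$ there consists of $\QQ^{-1}(\mathscr Q)$-measurable functions, so $\sigma(\{f_{n}\})$ is a countably generated sub-$\sigma$-algebra of $\QQ^{-1}(\mathscr Q)$ which, by a Chebyshev estimate, is $\mm$-dense in it for the pseudometric $(A,B)\mapsto\mm(A\triangle B)$. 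Pushing this family back to $Q$ exhibits a countably generated $\mathscr Q_{0}\subseteq\mathscr Q$ that is $\qq$-dense in $\mathscr Q$, i.e. $(Q,\mathscr Q,\qq)$ is essentially countably generated.

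\emph{Existence and uniqueness of the disintegration.} Let $\{B_{n}\}$ generate $\mathscr X$ and let $\mathscr A$ be the countable algebra it generates. For $A\in\mathscr A$ the conditional expectation $\mathbb E_{\mm}[\ind_{A}\mid\QQ^{-1}(\mathscr Q)]$ is $\QQ^{-1}(\mathscr Q)$-measurable, hence of the form $\tilde g_{A}\circ\QQ$ with $\tilde g_{A}:Q\to[0,1]$ Borel; I set provisionally $\mm_{q}(A):=\tilde g_{A}(q)$, so that the defining identity of conditional expectation becomes precisely the consistency condition $\mm(A\cap\QQ^{-1}(C))=\int_{C}\mm_{q}(A)\,\qq(dq)$. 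Since only countably many relations (finite additivity, monotonicity, $\mm_{q}(X)=1$, and continuity from above on the $B_{n}$'s) are involved, there is a single $\qq$-conull set off which $A\mapsto\mm_{q}(A)$ is a finitely additive probability content on $\mathscr A$. To upgrade it to a genuine Borel probability I would use inner regularity of $\mm$ by compact sets, which is where properness of $(X,\sfd)$ enters: one checks, again modulo a $\qq$-null set, that $q\mapsto\mm_{q}$ is inner regular along a compact class, hence extends uniquely (Carath\'eodory) to $\mm_{q}\in\mathcal P(X,\mathscr X)$, and $\qq$-measurability of $q\mapsto\mm_{q}(B)$ for general $B\in\mathscr X$ follows by a monotone class argument based on $\mathscr A$. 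Uniqueness is the standard $\pi$-system argument: two disintegrations agree $\qq$-a.e. on each $B_{n}$, hence, closing up under finite intersections and monotone limits, $\qq$-a.e. as measures.

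\emph{Strong consistency and the main obstacle.} The disintegration produced above need not satisfy $\mm_{q}(\QQ^{-1}(q))=1$; this failure is exactly where a badly chosen partition can be pathological, and it is the delicate point of the theorem. For the ``if'' direction I would take the given $\mm$-section $S\in\mathscr X$ with $\mathscr S\supseteq\mathcal B(S)$ and use $\QQ|_{S}$ to identify $Q$, up to $\qq$-null sets, with $S$ as a Borel subset of $X$; the saturated $\sigma$-algebra then separates $\qq$-almost all fibres, and separation forces each $\mm_{q}$ to be carried by the single fibre $\QQ^{-1}(q)$. Conversely, if the disintegration is strongly consistent, a measurable selection theorem applied to the closed-valued multifunction $q\mapsto\QQ^{-1}(q)$ on the conull set where the $\mm_{q}$ behave well yields such a section $S$, and one checks $\mathcal B(S)\subseteq\mathscr S$. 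The hardest part throughout is precisely the passage from finitely additive conditional contents to countably additive conditional measures (and the matching characterization of strong consistency): here the bare ``countably generated'' hypothesis does not suffice and one must genuinely exploit the Polish/Radon structure of $(X,\sfd,\mm)$ --- inner regularity, compact classes, and measurable selection --- exactly as carried out in Appendix A of \cite{biacara:extreme}.
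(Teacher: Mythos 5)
The paper does not actually prove this statement: Theorem \ref{T:disintr} is imported as a black box from Appendix A of \cite{biacara:extreme}, so there is no in-paper argument to compare yours against; what follows measures your sketch against the cited proof. Your architecture for the first two assertions is the classical one and is sound: $L^{2}$-separability for essential countable generation of $\mathscr{Q}$, conditional expectations on a countable generating algebra producing finitely additive contents off a single $\qq$-null set, a compact-class regularization to upgrade to countable additivity, and a $\pi$-system argument for uniqueness. The cited appendix reaches the same end by a different route: it uses the countable generating family to map $(X,\mathscr{X})$ measurably into $\{0,1\}^{\N}\subset[0,1]$, invokes existence of regular conditional probabilities on the standard image space, and pulls back. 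That route is what makes the theorem true for a bare countably generated probability space, so your closing remark that the ``bare countably generated hypothesis does not suffice'' for the passage to countably additive conditional measures is not accurate: the point of stating the theorem abstractly is precisely that it does suffice, and the Polish/Radon structure of $(X,\sfd,\mm)$ is only a convenient shortcut in the present application.

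The genuine gap is in the ``only if'' half of the strong-consistency characterization. You propose to apply a measurable selection theorem to ``the closed-valued multifunction $q\mapsto\QQ^{-1}(q)$'', but the fibres $X_{q}$ of an arbitrary partition are not closed, $Q$ carries no topology a priori, and the measurability of the graph of this multifunction is exactly what must be established --- assuming it begs the question (indeed, in the body of the paper the section is built by hand in Proposition \ref{P:Qlevelset} precisely because it does not come for free). The correct argument, which is the delicate content of Proposition A.9 of \cite{biacara:extreme}, runs differently: from strong consistency and inner regularity one extracts a $\sigma$-compact set of full $\mm$-measure meeting $\qq$-a.e.\ fibre in a set of full $\mm_{q}$-measure, identifies the quotient modulo $\qq$-null sets with a subset of $\{0,1\}^{\N}$ via a countable family generating $\mathscr{Q}$ essentially, and only then applies von Neumann--Jankov selection to the resulting analytic graph to produce the $\mm$-section $S$ and verify $\mathcal{B}(S)\subset\mathscr{S}$. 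Your ``if'' direction (a countable separating family inside $\mathcal{B}(S)\subset\mathscr{S}$ forces $\mm_{q}(\QQ^{-1}(q))=1$ for $\qq$-a.e.\ $q$) is correct as sketched.
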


We expand the statement of Theorem \ref{T:disintegrationgeneral}.\\
In the measure space $(Q, \mathscr{Q},\qq)$, the $\sigma$-algebra $\mathscr{Q}$ is \emph{essentially countably generated} if, by definition, 
there exists a countable family of sets $Q_{n} \subset Q$ such that for any $C \in \mathscr{Q}$ there exists $\hat C \in \hat{\mathscr{Q}}$, 
where $\hat{\mathscr{Q}}$ is the $\sigma$-algebra generated by $\{ Q_{n} \}_{n \in \N}$, such that $\qq(C\, \Delta \, \hat C) = 0$.

Uniqueness is understood in the following sense: if $q\mapsto \mm^{1}_{q}$ and $q\mapsto \mm^{2}_{q}$ are two consistent disintegrations then 
$\mm^{1}_{q}=\mm^{2}_{q}$ for $\qq$-a.e. $q \in Q$.

Finally, a set $S$ is a section for the partition $X = \cup_{q}X_{q}$ if for any $q \in Q$ there exists a unique $x_{q} \in S \cap X_{q}$.
A set $S_{\mm}$ is an $\mm$-section if there exists $Y \subset X$ with $\mm(X \setminus Y) = 0$ such that the partition $Y = \cup_{q} (X_{q} \cap Y)$
has section $S_{\mm}$. Once a section (or an $\mm$-section) is given, one can obtain the measurable space $(S,\mathscr{S})$ by pushing forward the $\sigma$-algebra 
$\mathscr{X}$ on $S$ via the map that associates to any $X_{q} \ni x \mapsto x_{q} = S \cap X_{q}$.

\medskip

\bigskip
\bigskip

\section{Transport set}\label{S:transportset}
The following setting is fixed once for all: 
\medskip

\begin{center}
$(X,\sfd,\mm)$ is a fixed metric measure space with $\mm(X)=1$ such that \\
the ambient metric space $(X, \sfd)$ is geodesic and proper (hence complete and separable).
\end{center}

\medskip
Let $\f : X \to \R$ be any $1$-Lipschitz function. Here  we present some useful results (all of them already presented in \cite{biacava:streconv}) 
concerning the $\sfd$-cyclically monotone set associated with $\f$:
\begin{equation}\label{E:Gamma}
\Gamma : = \{ (x,y) \in X\times X : \f(x) - \f(y) = \sfd(x,y) \},
\end{equation}
that can be seen as the set of couples moved by $\f$ with maximal slope.
Recall that a set $\Lambda \subset X \times X$ is said to be $\sfd$-cyclically monotone if for any finite set of points $(x_{1},y_{1}),\dots,(x_{N},y_{N})$ it holds
$$
\sum_{i = 1}^{N} \sfd(x_{i},y_{i}) \leq \sum_{i = 1}^{N} \sfd(x_{i},y_{i+1}),
$$
with the convention that $y_{N+1} = y_{1}$. \medskip

The following lemma is a consequence of the $\sfd$-cyclically monotone structure of $\Gamma$.


\begin{lemma}\label{L:cicli}
Let $(x,y) \in X\times X$ be an element of $\Gamma$. Let $\gamma \in \Geo(X)$ be such that $\gamma_{0} = x$ 
and $\gamma_{1}=y$. Then
$$
(\gamma_{s},\gamma_{t}) \in \Gamma, 
$$
for all $0\leq s \leq t \leq 1$.
\end{lemma}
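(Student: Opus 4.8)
The plan is to exploit two facts: that $\f$ is $1$-Lipschitz, and that for $(x,y)\in\Gamma$ the geodesic $\gamma$ joining them is parametrized proportionally to arclength. First I would record the ``telescoping'' identity: for $0\le s\le t\le 1$,
$$
\f(\gamma_{0}) - \f(\gamma_{1}) = \big(\f(\gamma_{0}) - \f(\gamma_{s})\big) + \big(\f(\gamma_{s}) - \f(\gamma_{t})\big) + \big(\f(\gamma_{t}) - \f(\gamma_{1})\big).
$$
Since $(x,y)\in\Gamma$, the left-hand side equals $\sfd(\gamma_{0},\gamma_{1})$. On the other hand, because $\f$ is $1$-Lipschitz, each of the three summands on the right is bounded above by the corresponding distance: $\f(\gamma_{0})-\f(\gamma_{s})\le \sfd(\gamma_{0},\gamma_{s})$, and similarly for the other two. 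Hence
$$
\sfd(\gamma_{0},\gamma_{1}) \le \sfd(\gamma_{0},\gamma_{s}) + \sfd(\gamma_{s},\gamma_{t}) + \sfd(\gamma_{t},\gamma_{1}).
$$

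The key step is that, since $\gamma\in\Geo(X)$, we have $\sfd(\gamma_{a},\gamma_{b}) = |a-b|\,\sfd(\gamma_{0},\gamma_{1})$ for all $a,b\in[0,1]$, so the right-hand side equals $\big(s + (t-s) + (1-t)\big)\sfd(\gamma_{0},\gamma_{1}) = \sfd(\gamma_{0},\gamma_{1})$. Therefore the inequality above is in fact an equality, which forces each of the three intermediate inequalities to be an equality as well; in particular
$$
\f(\gamma_{s}) - \f(\gamma_{t}) = \sfd(\gamma_{s},\gamma_{t}),
$$
which is precisely the statement that $(\gamma_{s},\gamma_{t})\in\Gamma$.

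This argument is essentially self-contained and uses only the definition \eqref{E:Gamma} of $\Gamma$ together with the $1$-Lipschitz property of $\f$ and the definition of $\Geo(X)$; no $\sfd$-cyclical monotonicity of $\Gamma$ beyond what is built into membership of a single pair is actually needed. There is no serious obstacle: the only thing to be careful about is the bookkeeping of which pieces of the geodesic are being compared, and the observation that an equality in a sum of ``$\le$'' relations propagates to equality in each summand. One could also phrase the same computation slightly more economically by comparing just $\f(\gamma_{s})-\f(\gamma_{t})$ against $\sfd(\gamma_{s},\gamma_{t})$ directly via $\f(\gamma_{s})-\f(\gamma_{t}) = \big(\f(x)-\f(y)\big) - \big(\f(x)-\f(\gamma_{s})\big) - \big(\f(\gamma_{t})-\f(y)\big) \ge \sfd(x,y) - \sfd(x,\gamma_{s}) - \sfd(\gamma_{t},y) = \sfd(\gamma_{s},\gamma_{t})$, the reverse inequality being the Lipschitz bound; but the telescoping presentation is the cleanest.
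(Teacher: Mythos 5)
Your proof is correct and is essentially the argument in the paper: the paper writes $\f(\gamma_{s})-\f(\gamma_{t})$ as the telescoping combination you describe, applies the $1$-Lipschitz bound to the two outer pieces, and uses the geodesic parametrization to conclude $\f(\gamma_{s})-\f(\gamma_{t})\geq \sfd(\gamma_{s},\gamma_{t})$ directly — precisely the ``more economical'' variant you mention at the end. The equality-propagation presentation you lead with is an equivalent repackaging of the same computation.
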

\begin{proof}
Take $0\leq s \leq t \leq 1$ and note that
\begin{align*}
 \f(\gamma_{s})& - \f(\gamma_{t})\crcr
 = &~ \f(\gamma_{s}) - \f(\gamma_{t}) + \f(\gamma_{0}) - \f(\gamma_{0}) + \f(\gamma_{1}) - \f(\gamma_{1})\crcr
\geq &~ \sfd(\gamma_{0},\gamma_{1}) - \sfd(\gamma_{0},\gamma_{s}) - \sfd(\gamma_{t},\gamma_{1}) \crcr
= &~ \sfd(\gamma_{s},\gamma_{t}).
\end{align*}
The claim follows.
\end{proof}

It is natural then to consider the set of geodesics $G \subset \Geo(X)$ such that 
$$
\gamma \in G \iff \{ (\gamma_{s},\gamma_{t}) : 0\leq s \leq t \leq 1  \} \subset \Gamma,
$$
that is $G : = \{ \gamma \in \Geo(X) : (\gamma_{0},\gamma_{1}) \in \Gamma \}$.  
We now recall some basic definitions of the $L^{1}$-optimal transportation theory that will be needed to describe the structure of $\Gamma$.

\begin{definition}\label{D:transport}
We define the set of \emph{transport rays} by 
$$
R := \Gamma \cup \Gamma^{-1},
$$
where $\Gamma^{-1}:= \{ (x,y) \in X \times X : (y,x) \in \Gamma \}$. 
The set of \emph{initial points} and \emph{final points} are defined   respectively by  
\begin{align*}
{\mathfrak a} :=& \{ z \in X: \nexists \, x \in X, (x,z) \in \Gamma, \sfd(x,z) > 0  \}, \crcr
{\mathfrak b} :=& \{ z \in X: \nexists \, x \in X, (z,x) \in \Gamma, \sfd(x,z) > 0 \}.
\end{align*}
The set of \emph{end points} is ${\mathfrak a} \cup {\mathfrak b}$. 
We define the subset of $X$, \emph{transport set with end points}: 
$$
\mathcal{T}_{e} = P_{1}(\Gamma \setminus \{ x = y \}) \cup 
P_{1}(\Gamma^{-1}\setminus \{ x=y \}).
$$
where $\{ x = y\}$ stands for $\{ (x,y) \in X^{2} : \sfd(x,y) = 0 \}$.
\end{definition}

Few comments are in order. Notice that $R$ coincide with $\{(x,y) \in X \times X \colon |\f(x) -\f(y)| = \sfd(x,y) \}$; 
the name transport set with end points for $\T_{e}$ is motivated by the fact that later on we will consider a more regular subset of $\T_{e}$ that will be called transport set;
moreover if $x \in X$ for instance is moved forward but not backward by $\f$, this is translated in $x \in \Gamma$ and $x \notin \Gamma^{-1}$; anyway it belongs to $\T_{e}$. 

We also introduce the following notation that will be used throughout the paper; 
we set $\Gamma(x):=P_2(\Gamma\cap(\{x\}\times X))$ and  $\Gamma^{-1}(x):=P_2(\Gamma^{-1}\cap(\{x\} \times X))$. 
More in general if $F \subset X \times X$, we set  $F(x) = P_2(F \cap (\{x\}\times X))$.

\begin{remark}\label{R:regularity}
Here we discuss the measurability of the sets introduced in Definition \ref{D:transport}.
Since $\f$ is $1$-Lipschitz, $\Gamma$ is closed and therefore $\Gamma^{-1}$ and $R$ are closed as well.
Moreover by assumption the space is proper, hence the sets $\Gamma, \Gamma^{-1}, R$ are $\sigma$-compact (countable union of compact sets).

Then we look at the set of initial and final points:
$$
{\mathfrak a} = P_{2} \left( \Gamma \cap \{ (x,z) \in X\times X : \sfd(x,z) > 0 \} \right)^{c}, \qquad 
{\mathfrak b} = P_{1} \left( \Gamma \cap \{ (x,z) \in X\times X : \sfd(x,z) > 0 \} \right)^{c}.
$$
Since $\{ (x,z) \in X\times X : \sfd(x,z) > 0 \} = \cup_{n} \{ (x,z) \in X\times X : \sfd(x,z) \geq 1/n \}$, it follows that  
it follows that both ${\mathfrak a}$ and ${\mathfrak b}$ are the complement of $\sigma$-compact sets. 
Hence ${\mathfrak a}$ and ${\mathfrak b}$ are Borel sets. Reasoning as before, it follows that $\T_{e}$ is a $\sigma$-compact set.
\end{remark}

\begin{lemma} \label{L:mapoutside}
Let $\pi \in \Pi(\mu_{0},\mu_{1})$ with $\pi(\Gamma) = 1$, then
$$
\pi(\mathcal{T}_e \times \mathcal{T}_e \cup \{x = y\}) = 1.
$$
\end{lemma}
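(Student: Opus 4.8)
The plan is to show that $\pi$-almost every pair $(x,y)$ with $x\neq y$ lies in $\mathcal T_e\times\mathcal T_e$, since the trivial part $\{x=y\}$ is already accounted for. Because $\pi(\Gamma)=1$, for $\pi$-a.e. $(x,y)$ we have $(x,y)\in\Gamma$, so immediately $x\in P_1(\Gamma\setminus\{x=y\})\subset\mathcal T_e$ whenever $\sfd(x,y)>0$. Thus the only thing to check is that the \emph{second} coordinate $y$ also belongs to $\mathcal T_e$ for $\pi$-a.e. $(x,y)$ with $x\neq y$. Now $(x,y)\in\Gamma$ with $x\neq y$ means $(y,x)\in\Gamma^{-1}\setminus\{x=y\}$, hence $y\in P_1(\Gamma^{-1}\setminus\{x=y\})\subset\mathcal T_e$. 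So in fact every pair $(x,y)\in\Gamma$ with $\sfd(x,y)>0$ satisfies $(x,y)\in\mathcal T_e\times\mathcal T_e$, and every pair with $\sfd(x,y)=0$ lies in $\{x=y\}$.

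First I would make this precise: decompose $\Gamma = \big(\Gamma\setminus\{x=y\}\big)\cup\big(\Gamma\cap\{x=y\}\big)$, a disjoint (up to the diagonal) Borel decomposition, using that $\{x=y\}$ is closed and $\Gamma$ is $\sigma$-compact by Remark \ref{R:regularity}. On the first piece, the projection identities $P_1(\Gamma\setminus\{x=y\})\subset\mathcal T_e$ and $P_1(\Gamma^{-1}\setminus\{x=y\})\subset\mathcal T_e$ hold by the very definition of $\mathcal T_e$; and if $(x,y)\in\Gamma\setminus\{x=y\}$ then symmetrically $(y,x)\in\Gamma^{-1}\setminus\{x=y\}$, so $x\in\mathcal T_e$ and $y\in\mathcal T_e$, giving $(x,y)\in\mathcal T_e\times\mathcal T_e$. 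On the second piece $(x,y)\in\{x=y\}$ by construction. Therefore $\Gamma\subset(\mathcal T_e\times\mathcal T_e)\cup\{x=y\}$ as sets.

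Finally I would conclude by monotonicity of $\pi$: since $\pi(\Gamma)=1$ and $\Gamma\subset(\mathcal T_e\times\mathcal T_e)\cup\{x=y\}$, we get $\pi\big((\mathcal T_e\times\mathcal T_e)\cup\{x=y\}\big)=1$. The only mild subtlety worth a line is measurability — that $(\mathcal T_e\times\mathcal T_e)\cup\{x=y\}$ is a Borel (indeed $\sigma$-compact up to the Borel diagonal) subset of $X\times X$, so that assigning it $\pi$-measure makes sense; this follows from Remark \ref{R:regularity}, where $\mathcal T_e$ is shown to be $\sigma$-compact and $\{x=y\}$ is closed. There is essentially no hard step here: the statement is a direct unwinding of the definitions of $\Gamma$, $\Gamma^{-1}$ and $\mathcal T_e$ together with the hypothesis $\pi(\Gamma)=1$; the only thing one must be careful about is keeping track of the diagonal $\{x=y\}$ so as not to claim more than $x\in\mathcal T_e$ exactly when $\sfd(x,y)>0$.
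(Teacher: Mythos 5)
Your proof is correct and follows essentially the same route as the paper: observe that any $(x,y)\in\Gamma$ with $x\neq y$ has $x\in P_1(\Gamma\setminus\{x=y\})$ and $y\in P_1(\Gamma^{-1}\setminus\{x=y\})$, hence $\Gamma\setminus\{x=y\}\subset\mathcal T_e\times\mathcal T_e$, and conclude from $\pi(\Gamma)=1$. The measurability remark is a harmless addition; nothing further is needed.
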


\begin{proof}
It is enough to observe that if $(z,w) \in \Gamma$ with $z \neq w$, then $w \in \Gamma(z)$ and $z \in \Gamma^{-1}(w)$ 
and therefore
$$
(z,w) \in \mathcal{T}_{e}\times \mathcal{T}_{e}.
$$
Hence $\Gamma \setminus \{x = y\}  \subset   \mathcal{T}_e \times \mathcal{T}_e$. Since $\pi(\Gamma) =1$, 
the claim follows.
\end{proof}

As a consequence, $\mu_{0}(\mathcal{T}_e) = \mu_{1}(\mathcal{T}_e)$ and any optimal map $T$ such that 
$T_\sharp \mu_{0} \llcorner_{\mathcal{T}_e}= \mu_{1} \llcorner_{\mathcal{T}_e}$ 
can be extended to an optimal map $T'$ with $ T^{'}_\sharp \mu_{0} = \mu_{1}$ with the same cost by setting
\begin{equation}
\label{E:extere}
T'(x) =
\begin{cases}
T(x), & \textrm{if } x \in \mathcal{T}_e \crcr
x, & \textrm{if } x \notin \mathcal{T}_e.
\end{cases}
\end{equation}

\medskip

It can be proved that the set of transport rays $R$ induces an equivalence relation on a subset of $\mathcal{T}_{e}$. 
It is sufficient to remove from $\mathcal{T}_{e}$
the branching points of geodesics. Then using curvature properties of the space, one can prove that such branching points all have $\mm$-measure zero.

\bigskip


\subsection{Branching structures in the Transport set}
What follows was first presented in \cite{cava:MongeRCD}.
Consider the sets of respectively forward and backward branching points
\begin{align}\label{E:branchingpoints}
	A_{+}: = 	&~\{ x \in \mathcal{T}_{e} : \exists z,w \in \Gamma(x), (z,w) \notin R \}, \nonumber \\ 
	A_{-}	: = 	&~\{ x \in \mathcal{T}_{e} : \exists z,w \in \Gamma(x)^{-1}, (z,w) \notin R \}.
\end{align}
The sets $A_{\pm}$ are $\sigma$-compact sets. Indeed since $(X,\sfd)$ is proper, any open set is $\sigma$-compact.
The main motivation for the definition of $A_{+}$ and $A_{-}$ is contained in the next 

\medskip
\begin{theorem}\label{T:equivalence}
The set of transport rays $R\subset X \times X$ is an equivalence relation on the set
$$
\mathcal{T}_{e} \setminus \left( A_{+} \cup A_{-} \right).
$$
\end{theorem}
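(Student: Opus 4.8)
The statement asserts that $R$, restricted to $\mathcal{T}_e \setminus (A_+ \cup A_-)$, is an equivalence relation. Reflexivity is immediate since $\varphi(x) - \varphi(x) = 0 = \sfd(x,x)$, so $(x,x) \in \Gamma \subset R$; and symmetry is built into the definition $R = \Gamma \cup \Gamma^{-1}$. So the entire content is transitivity: if $x,y,z \in \mathcal{T}_e \setminus (A_+ \cup A_-)$ with $(x,y) \in R$ and $(y,z) \in R$, then $(x,z) \in R$. The plan is to reduce to cases according to which of $\Gamma$, $\Gamma^{-1}$ each pair lies in, and to exploit that along any transport geodesic the $1$-Lipschitz potential $\varphi$ decreases at unit speed, so that $\varphi$ acts as a coordinate along rays; the branching exclusion is exactly what prevents two rays through a common point from ``diverging''.

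\textbf{Key steps.} First I would record the orientation trick: for $(a,b) \in \Gamma$ with $a \neq b$, Lemma~\ref{L:cicli} says every geodesic from $a$ to $b$ stays in $\Gamma$, and $\varphi(a) - \varphi(b) = \sfd(a,b) > 0$; more generally for any two points $u,v$ on a common transport ray one has $|\varphi(u) - \varphi(v)| = \sfd(u,v)$ and the point with larger $\varphi$-value is ``behind''. Second, consider $(x,y),(y,z) \in R$ and split into the four sign cases. The genuinely new case is when $y$ is, say, a forward point of one relation and a backward point of the other — i.e. $(x,y) \in \Gamma^{-1}$ and $(y,z)\in\Gamma$ (so $y$ has $x,z \in \Gamma(y)$, reading $\Gamma(y)=P_2(\Gamma\cap(\{y\}\times X))$ after orienting), or dually with $\Gamma(y)^{-1}$. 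If $x = y$ or $y = z$ the conclusion is trivial, so assume all three distinct. In the case $x, z \in \Gamma(y)$: by definition of $A_+$, since $y \notin A_+$, any two elements of $\Gamma(y)$ are $R$-related, hence $(x,z) \in R$, done. The dual case $x,z \in \Gamma(y)^{-1}$ uses $y \notin A_-$ identically. Third, the ``aligned'' cases, $(x,y),(y,z)$ both in $\Gamma$ (or both in $\Gamma^{-1}$): here $\varphi(x) > \varphi(y) > \varphi(z)$, and one must show $(x,z) \in \Gamma$, i.e. $\varphi(x) - \varphi(z) = \sfd(x,z)$. The inequality $\geq \sfd(x,z)$ is automatic by the triangle inequality and $1$-Lipschitzness gives $\leq$; but wait — that only gives $\varphi(x)-\varphi(z) \le \sfd(x,z)$ and $\varphi(x)-\varphi(z) = (\varphi(x)-\varphi(y))+(\varphi(y)-\varphi(z)) = \sfd(x,y)+\sfd(y,z) \ge \sfd(x,z)$, so equality holds and in fact $y$ lies on a geodesic between $x$ and $z$; thus $(x,z)\in\Gamma$ with no branching needed. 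So the branching hypothesis is only invoked in the ``V-shaped'' configuration where $y$ is a common source or common sink.

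\textbf{Main obstacle.} The subtle point is handling the configuration where $y$ is a common endpoint but the two outgoing (or incoming) rays are the ones that could branch: one must make sure that excluding $y$ from $A_+ \cup A_-$ genuinely covers it, and in particular that the relation $(z,w) \in R$ between two elements of $\Gamma(y)$ — which is the negation of the defining condition of $A_+$ — really yields $(x,z) \in R$ for the original pair, keeping careful track of which point is $P_1$ and which is $P_2$ in each of $\Gamma$ and $\Gamma^{-1}$, and that nothing goes wrong when one of the three points coincides with another or when some $\sfd$ vanishes (all such degenerate subcases collapse to reflexivity or to a hypothesis directly). A secondary bookkeeping nuisance is that $R$ is not a priori antisymmetric, so ``$\varphi$ is larger'' statements have to be phrased as absolute values until an orientation along each ray is fixed; once that is done the argument is a short case check, and I expect the write-up to be essentially the four-case split above with the two branching cases doing the only real work.
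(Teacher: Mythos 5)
Your proof is correct and follows essentially the same route as the paper's: reflexivity and symmetry are immediate, and transitivity is handled by the same four-case split, with the triangle inequality plus $1$-Lipschitzness of $\f$ disposing of the two aligned cases and the exclusion of $A_+$ (resp. $A_-$) disposing of the two V-shaped cases where the middle point is a common source (resp. sink). The extra care you take with degenerate coincidences and with which projection each point sits in matches what the paper implicitly assumes.
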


\begin{proof}
First, for all $x \in P_{1}(\Gamma)$, $(x,x) \in R$. If $x,y \in \mathcal{T}_{e}$ with $(x,y) \in R$, then by definition of $R$, it follows straightforwardly that $(y,x) \in R$.

So the only property needing a proof is transitivity. Let $x,z,w \in \mathcal{T}_{e} \setminus \left( A_{+} \cup A_{-} \right)$
be such that $(x,z), (z,w) \in R$ with $x,z$ and $w$ distinct points. The claim is $(x,w) \in R$.
So we have 4 different possibilities: the first one is 
\[
z\in \Gamma(x), \quad w \in \Gamma(z).
\]
This immediately implies $w \in \Gamma(x)$ and therefore $(x,w) \in R$.
The second possibility is 
\[
z\in \Gamma(x), \quad z \in \Gamma(w),
\]
that can be rewritten as $(z,x), (z,w) \in \Gamma^{-1}$. Since $z \notin A_{-}$, necessarily $(x,w) \in R$. 
Third possibility: 
\[
x\in \Gamma(z), \quad w \in \Gamma(z),
\]
and since $z \notin A_{+}$ it follows that $(x,w) \in R$.
The last case is 
\[
x\in \Gamma(z), \quad z \in \Gamma(w),
\]
and therefore $x \in \Gamma(w)$, hence $(x,w) \in R$ and the claim follows.
\end{proof}

\bigskip

Next, we show that each equivalence class of $R$ is formed by a single geodesic.

\begin{lemma}\label{L:singlegeo}
For any $x \in \mathcal{T}$ and $z,w \in R(x)$  there exists $\gamma \in G \subset \Geo(X)$ such that 
$$
\{ x, z,w \} \subset \{ \gamma_{s} : s\in [0,1] \}.
$$
If $\hat \gamma \in G$ enjoys the same property, then 
\[
\big( \{ \hat \gamma_{s} : s \in [0,1] \} \cup \{ \gamma_{s} : s \in [0,1] \} \big) \subset  \{ \tilde \gamma_{s} : s \in [0,1] \}
\]
for some $\tilde \gamma \in G$.
\end{lemma}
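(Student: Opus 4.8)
The plan is to produce the geodesic $\gamma$ through $\{x,z,w\}$ by first putting the three points in a linear order coming from the values of $\f$, and then splicing together the $\Gamma$-segments the order provides. Since $z,w\in R(x)$, each of $z$ and $w$ is $R$-related to $x$, so after relabelling (replacing $\f$ by $-\f$ and swapping roles if necessary, or simply going case by case) we may assume that along the transport ray through $x$ the points are ordered so that $\f$ is nonincreasing, say $\f(z)\ge \f(x)\ge\f(w)$ after relabelling, with $(z,x),(x,w)\in\Gamma$; the transitivity argument of Theorem \ref{T:equivalence} (using that $x\notin A_+\cup A_-$, which holds since $x\in\mathcal T$) guarantees that in all four configurations of ``$(x,z)\in R$ and $(x,w)\in R$'' one can arrange $(z,x),(x,w)\in\Gamma$, hence $(z,w)\in\Gamma$. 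Then $\sfd(z,w)=\f(z)-\f(w)=(\f(z)-\f(x))+(\f(x)-\f(w))=\sfd(z,x)+\sfd(x,w)$, so $x$ lies on a geodesic from $z$ to $w$: concatenating a geodesic from $z$ to $x$ with one from $x$ to $w$ gives $\gamma\in\Geo(X)$ with $\gamma_0=z$, $\gamma_1=w$, passing through $x$. Lemma \ref{L:cicli} applied to $(z,w)\in\Gamma$ and this $\gamma$ shows $(\gamma_s,\gamma_t)\in\Gamma$ for all $s\le t$, i.e. $\gamma\in G$, and $\{x,z,w\}\subset\{\gamma_s:s\in[0,1]\}$.

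For the second assertion, suppose $\hat\gamma\in G$ also contains $\{x,z,w\}$ in its image. Both $\gamma$ and $\hat\gamma$ are geodesics whose images are contained in $R(x)$ (by Lemma \ref{L:cicli} and the definition of $R$ and of the equivalence class), so their union is a subset of the single equivalence class $R(x)$. The idea is to take the two ``extreme'' points among the four endpoints $\gamma_0,\gamma_1,\hat\gamma_0,\hat\gamma_1$ with respect to $\f$: let $p$ be the one with largest $\f$-value and $q$ the one with smallest. Every point appearing on either geodesic lies in $R(x)=R(p)=R(q)$ and has $\f$-value between $\f(q)$ and $\f(p)$; arguing exactly as above (again via transitivity on $\mathcal T\setminus(A_+\cup A_-)$) one gets $(p,q)\in\Gamma$, hence a geodesic $\tilde\gamma$ from $p$ to $q$. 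It remains to check that every point $y$ on $\gamma$ or $\hat\gamma$ lies on $\tilde\gamma$: for such a $y$ one has $(p,y),(y,q)\in\Gamma$ with $\sfd(p,y)+\sfd(y,q)=\f(p)-\f(q)=\sfd(p,q)$, so $y$ is on some geodesic from $p$ to $q$; since the whole configuration sits inside one equivalence class and (by the essentially non-branching / non-branching structure of $R$ away from $A_\pm$, which is exactly where we are working) a transport ray does not branch, this geodesic from $p$ to $q$ through $y$ coincides with $\tilde\gamma$. Hence $\{\hat\gamma_s\}\cup\{\gamma_s\}\subset\{\tilde\gamma_s:s\in[0,1]\}$.

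The routine parts are the finitely many case distinctions for the $R$-relations, which are handled uniformly by the transitivity proof of Theorem \ref{T:equivalence} together with the additivity of $\f$-differences along $\Gamma$. The main obstacle, and the point to state carefully, is the non-branching step in the second part: one must make sure that ``$y$ lies on \emph{some} geodesic from $p$ to $q$'' upgrades to ``$y$ lies on \emph{the} geodesic $\tilde\gamma$''. This is where the removal of $A_+\cup A_-$ and the fact that $x\in\mathcal T$ (the regular transport set, on which $R$ is an honest equivalence relation with single-geodesic classes) are essential: two distinct geodesics through $y$ sharing the endpoints $p,q$ would, by Lemma \ref{L:cicli}, produce a forward or backward branching of $\Gamma$ at an interior point of the ray, contradicting that the ray avoids $A_+\cup A_-$. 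Once this is in place the inclusion of images follows, completing the proof.
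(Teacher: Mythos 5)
Your overall strategy for the first assertion (total order the points by $\f$, get all pairwise relations in $\Gamma$ via the transitivity of Theorem \ref{T:equivalence}, and conclude collinearity from additivity of distances) is sound and in fact close to the paper's, but the execution contains a concrete error: it is \emph{not} true that in all four configurations one can arrange $(z,x),(x,w)\in\Gamma$. If both $z,w\in\Gamma(x)$ (or both in $\Gamma^{-1}(x)$), then $\f(x)\geq\max\{\f(z),\f(w)\}$, so $x$ is an \emph{extreme} point of the triple and cannot be placed in the middle by any relabelling of $z,w$ or sign change of $\f$; correspondingly the identity $\sfd(z,w)=\sfd(z,x)+\sfd(x,w)$ fails (on $\R$ with $\f(t)=-t$, $x=0$, $z=1$, $w=2$ one gets $1\neq 3$). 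The correct statement is that whichever of the three points has the intermediate $\f$-value splits the geodesic; transitivity through $x\notin A_+\cup A_-$ still gives $(z,w)\in R$, and the $\f$-ordering then orients all three pairs in $\Gamma$, so the conclusion survives once the middle point is chosen correctly. (The paper handles this configuration differently, by an intermediate-value argument on a geodesic from $x$ to $z$ and a contradiction with $x\notin A_+$; your route is equally viable after the repair.)

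The second assertion has a more serious gap. You fix an \emph{arbitrary} geodesic $\tilde\gamma$ from $p$ to $q$ and argue that every $y$ on $\gamma\cup\hat\gamma$ must lie on it because ``the ray avoids $A_+\cup A_-$''. But the hypothesis is only that $x\in\mathcal{T}$; the other points of $R(x)$ — in particular the extreme endpoints $p$ and $q$ — may perfectly well belong to $A_+\cup A_-$ (this lemma is proved \emph{before} one knows $\mm(A_\pm)=0$, and even afterwards individual rays can terminate at branching points). If $p\in A_+$ there may be several distinct geodesics in $G$ from $p$ to $q$, and an arbitrary one need not even pass through $x$, so ``$y$ lies on some geodesic from $p$ to $q$'' does not upgrade to ``$y$ lies on $\tilde\gamma$''. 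The only non-branching information available is at $x$ itself: transitivity through $x$ shows that two points of $R(x)$ with the same $\f$-value coincide. Using this, the two geodesics $\gamma$ and $\hat\gamma$ agree on the overlap of their $\f$-ranges, and one should \emph{construct} $\tilde\gamma$ as the concatenation of the appropriate sub-arcs of $\gamma$ and $\hat\gamma$; its length is $\f(p)-\f(q)=\sfd(p,q)$ (since $(p,q)\in\Gamma$ by transitivity through $x$), so it is a geodesic, it lies in $G$ by Lemma \ref{L:cicli}, and it contains both images by construction. With these two repairs the proof is complete.
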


Since $G = \{ \gamma \in \Geo(X) : (\gamma_{0},\gamma_{1}) \in \Gamma \}$, Lemma \ref{L:singlegeo} states that 
as soon as we fix an element $x$ in $\mathcal{T}_{e} \setminus (A_{+} \cup A_{-})$ and we pick two elements $z,w$ in the same equivalence class of $x$, then these three points are aligned on 
a geodesic $\gamma$ whose image is again all contained in the same equivalence class $R(x)$.

\begin{proof}
Assume that $x, z$ and $w$ are all distinct points otherwise the claim follows trivially. 
We consider different cases.

\medskip
\noindent
\emph{First case:} $z \in \Gamma(x)$ and $w \in \Gamma^{-1}(x)$. \\
By $\sfd$-cyclical monotonicity
$$
\sfd(z,w) \leq \sfd(z,x) + \sfd(x,w) = \f(w) - \f(z) \leq \sfd(z,w).
$$
Hence $z,x$ and $w$ lie on a geodesic. 
\medskip

\noindent
\emph{Second case:} $z,w \in \Gamma(x)$. \\
Without loss of generality $\f(x) \geq \f(w) \geq \f(z)$. Since  in the proof of 
Lemma \ref{L:geoingamma} we have already excluded the case $\f(w) = \f(z)$, we assume $\f(x) > \f(w) > \f(z)$. 
Then if there would not exist any geodesics $\gamma \in G$ with $\gamma_{0} = x$ and $\gamma_{1} = z$ and $\gamma_{s} = w$,
there will be $\gamma \in G$ with $(\gamma_{0},\gamma_{1}) = (x,z)$ and $s \in (0,1)$ such that 
$$
\f(\gamma_{s}) = \f(w), \qquad \gamma_{s} \in \Gamma(x),  \qquad \gamma_{s} \neq w.
$$
As observed in the proof of Lemma \ref{L:geoingamma}, this would imply that  $(\gamma_{s},w) \notin R$ and 
since $x \notin A_{+}$ this would be a contradiction. Hence the second case follows.

The remaining two cases follow with the same reasoning, exchanging the role of $\Gamma(x)$ with the one of $\Gamma^{-1}(x)$.
The second part of the statement follows now easily.
\end{proof}

\bigskip

\bigskip
\bigskip


\section{Cyclically monotone sets}\label{S:cyclically}

Following Theorem \ref{T:equivalence} and Lemma \ref{L:singlegeo}, the next step is to prove that both $A_{+}$ and $A_{-}$ have $\mm$-measure zero, 
that is branching happens on rays with zero $\mm$-measure. 
Already from the statement of this property, it is clear that some regularity assumption on $(X,\sfd,\mm)$ should play a role. We will indeed assume the space to 
enojoy a stronger form of essentially non-branching. 
Recall that the latter is formulated in terms of geodesics of $(\mathcal{P}_{2}(X),W_{2})$ hence of $\sfd^{2}$-cyclically monotone set, while we need 
regularity for the $\sfd$-cyclically monotone set $\Gamma$.
Hence it is necessary to include $\sfd^{2}$-cyclically monotone sets as subset of $\sfd$-cyclically monotone sets.

We present here a strategy introduced by the author in \cite{cava:decomposition, cava:MongeRCD} from where all the material presented in this section is taken.
Section \ref{Ss:structure} contains results from \cite{biacava:streconv} while Section \ref{Ss:balanced} is taken from \cite{CM1}.

\begin{lemma}[Lemma 4.6 of \cite{cava:decomposition}]\label{L:12monotone}
Let $\Delta \subset \Gamma$ be any set so that: 
$$
(x_{0},y_{0}), (x_{1},y_{1}) \in \Delta \quad \Rightarrow \quad (\f(y_{1}) - \f(y_{0}) )\cdot (\f(x_{1}) - \f(x_{0}) ) \geq 0.
$$
Then $\Delta$ is $\sfd^{2}$-cyclically monotone.
\end{lemma}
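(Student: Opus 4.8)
The plan is to verify the defining cyclical inequality for $\sfd^2$ directly, by comparing it term-by-term with the $\sfd$-cyclical monotonicity of $\Gamma$ together with the sign hypothesis on $\Delta$. Fix a finite collection $(x_1,y_1),\dots,(x_N,y_N)\in\Delta$ with the convention $y_{N+1}:=y_1$; I must show $\sum_i \sfd(x_i,y_i)^2 \le \sum_i \sfd(x_i,y_{i+1})^2$. Since each $(x_i,y_i)\in\Gamma$ we have $\sfd(x_i,y_i)=\f(x_i)-\f(y_i)\ge 0$, and since $\f$ is $1$-Lipschitz we have $\sfd(x_i,y_{i+1})\ge |\f(x_i)-\f(y_{i+1})|\ge \f(x_i)-\f(y_{i+1})$. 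The first reduction is therefore to pass from the squared-distance inequality to the corresponding inequality for the numbers $a_i:=\f(x_i)-\f(y_i)\ge 0$ and $b_i:=\f(x_i)-\f(y_{i+1})$: if I can show that $\sum_i a_i^2 \le \sum_i b_i^2$ whenever the $b_i$'s may be negative, I would still need care because squaring is not monotone on $\R$; the resolution is that replacing $b_i$ by $\max(b_i,0)$ only decreases $\sum b_i^2$ unless... — actually the clean route is to not square prematurely but to use the one-dimensional structure of $\f$-values.

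The key idea is to reduce to a one-dimensional cyclical monotonicity statement on $\R$. Set $s_i:=\f(x_i)$ and $r_i:=\f(y_i)$. The sign hypothesis $(\f(y_1)-\f(y_0))(\f(x_1)-\f(x_0))\ge 0$ for all pairs in $\Delta$ says exactly that the finite set $\{(s_i,r_i)\}_{i=1}^N\subset\R^2$ is monotone in the usual sense (it lies on the graph of a nondecreasing relation). A standard fact is that any monotone subset of $\R\times\R$ is $c$-cyclically monotone for every convex cost $c$, in particular for $c(s,r)=|s-r|^2$; hence $\sum_i |s_i-r_i|^2 \le \sum_i |s_i - r_{i+1}|^2$. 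Combining with $\sfd(x_i,y_i)=s_i-r_i=a_i\ge 0$ so that $a_i^2=|s_i-r_i|^2$, and with $|s_i-r_{i+1}|\le \sfd(x_i,y_{i+1})$ from $1$-Lipschitzness of $\f$ (valid even when $s_i-r_{i+1}<0$, since we take absolute values before squaring), I get
$$
\sum_{i=1}^N \sfd(x_i,y_i)^2 = \sum_{i=1}^N |s_i-r_i|^2 \le \sum_{i=1}^N |s_i-r_{i+1}|^2 \le \sum_{i=1}^N \sfd(x_i,y_{i+1})^2,
$$
which is precisely $\sfd^2$-cyclical monotonicity of $\Delta$.

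The main obstacle is establishing the intermediate claim that a monotone finite subset of $\R^2$ is cyclically monotone for the quadratic cost; I expect to prove it by the classical telescoping/rearrangement argument (reorder the indices so that $s_1\le\cdots\le s_N$, note that monotonicity forces a compatible ordering of the $r_i$, and then $\sum_i |s_i-r_{i+1}|^2 - \sum_i|s_i-r_i|^2 = -2\sum_i s_i(r_{i+1}-r_i) = 2\sum_i r_i(s_i - s_{i-1})\cdot(\text{sign bookkeeping})$, which is nonnegative by the matched ordering), or alternatively by invoking the general principle that $c$-cyclical monotonicity for the one-dimensional quadratic cost is equivalent to monotonicity of the support. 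A minor subtlety to handle is the case where several $s_i$ or $r_i$ coincide, where one must be slightly careful that ``$\ge 0$'' in the hypothesis still pins down a consistent nondecreasing arrangement; this is routine. Everything else is bookkeeping with the two inequalities coming from $(x_i,y_i)\in\Gamma$ and the $1$-Lipschitz property of $\f$.
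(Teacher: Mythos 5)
Your proposal is correct and follows essentially the same route as the paper: project to the set of $\f$-values in $\R^{2}$, observe it is monotone, deduce quadratic cyclical monotonicity there, and transfer back via $\sfd(x_i,y_i)=\f(x_i)-\f(y_i)$ on $\Gamma$ and the $1$-Lipschitz bound $|\f(x_i)-\f(y_{i+1})|\le\sfd(x_i,y_{i+1})$. The only cosmetic difference is in the intermediate step (the paper builds a convex potential for a maximal monotone extension, while you sketch a rearrangement/telescoping argument), but both are standard and equivalent.
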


\begin{proof} 
It follows directly from the hypothesis of the lemma that the set
$$
\Lambda: = \{ (\f(x), \f(y) ) :   (x,y) \in \Delta \} \subset \erre^{2}, 
$$
is monotone in the Euclidean sense. Since $\Lambda \subset \R^{2}$, it is then a standard fact that $\Lambda$ is 
also $|\cdot|^{2}$-cyclically monotone, where $|\cdot|$ denotes the modulus. 
We anyway  include a short proof:
there exists a maximal monotone multivalued function $F$ such that $\Lambda \subset \gr (F)$ and its domain is an interval, say $(a,b)$ with $a$ and $b$ possibly infinite; 
moreover, apart from countably many $x \in \R$, the set $F(x)$ is a singleton. 
Then the following function is well defined:  
$$
\Psi(x) : = \int_{c}^{x} F(s) ds, 
$$
where $c$ is any fixed element of $(a,b)$. Then observe that %
$$
\Psi(z) - \Psi(x) \geq y(z-x), \qquad \forall \ z,x \in (a,b),
$$
where $y$ is any element of $F(x)$. In particular this implies that $\Psi$ is convex and $F(x)$ is a subset of its sub-differential. 
In particular $\Lambda$ is $|\cdot |^{2}$-cyclically monotone. \\
Then for $\{(x_{i},y_{i})\}_{ i \leq N} \subset \Delta$, since $\Delta \subset \Gamma$,
it holds
\begin{align*} 
\sum_{i=1}^{N} \sfd^{2}(x_{i},y_{i}) = &~ \sum_{i =1}^{N}|\f(x_{i}) - \f(y_{i})|^{2} \crcr
\leq&~ \sum_{i =1}^{N}|\f(x_{i}) - \f(y_{i+1})|^{2} \crcr
\leq &~ \sum_{i=1}^{N} \sfd^{2}(x_{i},y_{i+1}),
\end{align*}
where the last inequality is given by the 1-Lipschitz regularity of $\f$. The claim follows.
\end{proof}

To study the set of branching points is necessary to relate point of branching to geodesics.
In the next Lemma, using Lemma \ref{L:cicli}, we observe that once a branching happens 
there exist two distinct geodesics, both contained in $\Gamma(x)$, 
that are not in relation in the sense of $R$.

\begin{lemma}\label{L:geoingamma}
Let $x \in A_{+}$. 
Then there exist two distinct geodesics $\gamma^{1},\gamma^{2} \in G$ 
such that 
\begin{itemize}
\item[-] $(x,\gamma_{s}^{1}), (x,\gamma_{s}^{2}) \in \Gamma$ for all $s \in [0,1]$;
\item[-] $(\gamma_{s}^{1},\gamma^{2}_{s}) \notin R$ for all $s \in [0,1]$; 
\item[-] $\f(\gamma^{1}_{s}) = \f(\gamma^{2}_{s})$ for all $s \in [0,1]$.
\end{itemize}
Moreover both geodesics are non-constant.
\end{lemma}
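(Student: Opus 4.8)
The starting point is the definition of $A_{+}$: if $x \in A_{+}$, then there exist $z, w \in \Gamma(x)$ with $(z,w) \notin R$. My plan is first to produce, from $z$ and $w$, two geodesics emanating from $x$. Since $(x,z) \in \Gamma$ and $(x,w) \in \Gamma$, we may pick $\sigma, \eta \in \Geo(X)$ with $\sigma_{0} = x = \eta_{0}$, $\sigma_{1} = z$, $\eta_{1} = w$; by Lemma \ref{L:cicli} applied to $(x,z)$ and $(x,w)$ we get $(x, \sigma_{s}) \in \Gamma$ and $(x, \eta_{s}) \in \Gamma$ for all $s \in [0,1]$, which already gives the first bullet for these auxiliary geodesics. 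Note also that $z \neq x$ and $w \neq x$ (otherwise, say $z = x$, then $(z,w) = (x,w) \in \Gamma \subset R$, contradicting $(z,w) \notin R$), so both $\sigma$ and $\eta$ are non-constant.

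The next, and main, point is to reparametrize so that the two geodesics run ``at the same $\f$-height.'' Since $\f$ is $1$-Lipschitz and $(x, \sigma_{s}) \in \Gamma$, we have $\f(x) - \f(\sigma_{s}) = \sfd(x, \sigma_{s}) = s\,\sfd(x,z)$, so $s \mapsto \f(\sigma_{s})$ is an affine decreasing bijection of $[0,1]$ onto $[\f(z), \f(x)]$; similarly $s \mapsto \f(\eta_{s})$ is affine decreasing onto $[\f(w), \f(x)]$. Without loss of generality assume $\f(z) \geq \f(w)$ (i.e. $\sfd(x,z) \leq \sfd(x,w)$). I would then restrict $\eta$ to the sub-geodesic where $\f(\eta_{s}) \in [\f(z), \f(x)]$ and reparametrize both on $[0,1]$, obtaining $\gamma^{1}, \gamma^{2} \in G$ with $\gamma^{1}_{0} = \gamma^{2}_{0} = x$, $\f(\gamma^{1}_{s}) = \f(\gamma^{2}_{s})$ for all $s$, and $(x, \gamma^{i}_{s}) \in \Gamma$ for all $s$ (the latter is preserved: a sub-segment of a $\Gamma$-geodesic through $x$ is still in $\Gamma$ with $x$, again by Lemma \ref{L:cicli}). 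The subtle case is $\f(z) = \f(w)$; but then the endpoints $\gamma^{1}_{1}$ and $\gamma^{2}_{1}$ have equal $\f$-value and both lie in $\Gamma(x)$, so if they were distinct we would directly have two points of $\Gamma(x)$ not in relation (which is exactly what we want, with $s=1$), while if they coincide one pushes the construction back to the original $z, w$; in any event one reduces to $\f(z) \ne \f(w)$ or the statement is immediate.

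It remains to verify the second bullet, $(\gamma^{1}_{s}, \gamma^{2}_{s}) \notin R$ for all $s \in [0,1]$, and to know $\gamma^{1} \neq \gamma^{2}$. For $s = 1$ this is the hypothesis $(z,w)\notin R$ (in the reduced parametrization the endpoint pair is an $R$-translate of $(z,w)$ along $\Gamma$, and membership in $R$ is transitive-compatible via Theorem \ref{T:equivalence} away from branching — care is needed here, so I would instead argue directly). The key observation is: if for some $s$ we had $(\gamma^{1}_{s}, \gamma^{2}_{s}) \in R$, then since $\f(\gamma^{1}_{s}) = \f(\gamma^{2}_{s})$ and $R = \{(a,b) : |\f(a) - \f(b)| = \sfd(a,b)\}$, this forces $\sfd(\gamma^{1}_{s}, \gamma^{2}_{s}) = 0$, i.e. $\gamma^{1}_{s} = \gamma^{2}_{s}$; combined with $(x, \gamma^{i}_{s}) \in \Gamma$, $\f(x) - \f(\gamma^{i}_{s}) = \sfd(x, \gamma^{i}_{s})$, so $x$, $\gamma^{1}_{s} = \gamma^{2}_{s}$ lie on both $\gamma^{1}$ and $\gamma^{2}$; one then extends past this common point and, using $\sfd$-cyclical monotonicity of $\Gamma$ together with the fact that the two geodesics agree at $x$ and at $\gamma^{i}_{s}$, concludes the two endpoints would also be in relation — contradicting $(z,w)\notin R$. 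In particular $\gamma^{1} \neq \gamma^{2}$, since equal geodesics would give $(z,w) = (\gamma^{1}_{1}, \gamma^{2}_{1})$ with $\gamma^{1}_{1} = \gamma^{2}_{1} \in R$. The main obstacle I anticipate is making the ``extend past the common point'' step airtight: one must check that a point where the two $\Gamma$-geodesics from $x$ coincide is a genuine obstruction, i.e. that $\sfd$-monotonicity then propagates the relation $R$ all the way to the endpoints $z, w$; this is essentially a repeated application of Lemma \ref{L:cicli} and the triangle inequality along the aligned points, but it requires bookkeeping of which of $z, w$ sits ``higher'' in $\f$.
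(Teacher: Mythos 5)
Your starting point and your key mechanism are the same as the paper's: take $z,w\in\Gamma(x)$ with $(z,w)\notin R$, join them to $x$ by geodesics in $G$, align the two curves so that they run at the same $\f$-height, and then use the observation that if $\f(a)=\f(b)$ and $(a,b)\in R$ then $\sfd(a,b)=|\f(a)-\f(b)|=0$, i.e.\ $a=b$. That observation is exactly the engine of the paper's proof. However, your construction has a genuine gap: you keep $\gamma^{1}_{0}=\gamma^{2}_{0}=x$, and then the second bullet fails at $s=0$, since $(x,x)\in\Gamma\subset R$. The conclusion of the lemma simply cannot hold for two geodesics that share their initial point, so some restriction away from $x$ (and away from any other common point) is unavoidable, not an optional refinement.

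Your proposed repair --- deriving a contradiction from a common point $p=\gamma^{1}_{s}=\gamma^{2}_{s}$ by ``extending past it'' --- does not go through. From such a $p$ you only get $z\in\Gamma(p)$ and (the reparametrized) $w\in\Gamma(p)$, i.e.\ two points of $\Gamma(p)$; concluding $(z,w)\in R$ from this is exactly the non-branching statement one is in the middle of analyzing, and it is false in general (it would say $p\notin A_{+}$). The paper avoids this entirely: after reparametrizing $\gamma^{2}$ so that $\f(\gamma^{2}_{s_{2}})=\f(z)$, it first checks $z\neq\gamma^{2}_{s_{2}}$ (if they coincided, $w$ would lie on the geodesic through $z$, giving $(z,w)\in\Gamma\subset R$, a contradiction), deduces $(z,\gamma^{2}_{s_{2}})\notin R$ from the equal-$\f$-level observation, and then by continuity finds $\delta>0$ such that for $s\in[0,\delta]$ the points $\gamma^{1}_{1-s}$ and $\gamma^{2}_{s_{2}(1-s)}$ have equal $\f$-value and positive mutual distance; the final $\gamma^{1},\gamma^{2}$ are these short sub-geodesics near the endpoints, not curves emanating from $x$. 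You should restructure your argument along these lines: prove separation at the endpoint level first, then propagate it by continuity to a short parameter interval, rather than trying to rule out interior coincidences on curves that are forced to coincide at $s=0$.
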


\begin{proof}
From the definition of $A_{+}$ there exists $z,w \in \mathcal{T}_{e}$ such that $z,w \in \Gamma(x)$ and $(z,w) \notin R$. 
Since $z,w \in \Gamma(x)$, from Lemma \ref{L:cicli} there exist two geodesics $\gamma^{1},\gamma^{2} \in G$ such that 
$$
\gamma^{1}_{0} = \gamma^{2}_{0} = x, \quad \gamma^{1}_{1} = z, \quad \gamma^{2}_{1} = w.
$$
Since $(z,w) \notin R$, necessarily both $z$ and $w$ are different from $x$ and $x$ is not a final point, that is $x \notin {\mathfrak b}$. 
So the previous geodesics are not constant.
Since $z$ and $w$ can be exchanged, we can also assume that $\f(z) \geq \f(w)$.
Since $z \in \Gamma(x)$, $\f(x) \geq \f(z)$ and by continuity there exists 
$s_{2} \in (0,1]$ such that 
$$
\f(z) = \f(\gamma^{2}_{s_{2}}).
$$
Note that $z \neq \gamma^{2}_{s_{2}}$, otherwise $w \in \Gamma(z)$ and therefore $(z,w) \in R$.
Moreover still $(z,\gamma^{2}_{s_{2}}) \notin R$. Indeed if the contrary was true, then 
$$
0= |\f(z) - \f(\gamma^{2}_{s_{2}}) | = \sfd(z,\gamma^{2}_{s_{2}}),
$$
that is a contradiction with $z \neq \gamma^{2}_{s_{2}}$.

So by continuity there exists $\delta > 0$ such that 
$$
\f (\gamma^{1}_{1-s} ) = \f (\gamma^{2}_{s_{2}(1-s)} ), \qquad \sfd(\gamma^{1}_{1-s}, \gamma^{2}_{s_{2}-s}) > 0,
$$
for all $0 \leq s \leq \delta$. 

Hence reapplying the previous argument $(\gamma^{1}_{1-s}, \gamma^{2}_{s_{2}(1-s)}) \notin R$.
The curve $\gamma^{1}$ and $\gamma^{2}$ of the claim are then obtained 
properly restricting and rescaling the geodesic $\gamma^{1}$ and $\gamma^{2}$ considered so far.
\end{proof}

The previous correspondence between branching points and couples of branching geodesics can be proved to be measurable.
We will make use of the following selection result, Theorem 5.5.2 of \cite{Sri:courseborel}. We again refer to \cite{Sri:courseborel} for some preliminaries on analytic sets.

\begin{theorem}
\label{T:vanneuma}
Let $X$ and $Y$ be Polish spaces, $F \subset X \times Y$ analytic, and $\mathcal{A}$ be the $\sigma$-algebra generated by the analytic subsets of X. 
Then there is an $\mathcal{A}$-measurable section $u : P_1(F) \to Y$ of $F$.
\end{theorem}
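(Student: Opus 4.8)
The plan is to deduce this from the classical representation of analytic sets as continuous images of the Baire space $\N^{\N}$, combined with a leftmost–branch selection; this is the Jankov--von Neumann uniformization argument. If $F=\emptyset$ the statement is vacuous, so assume $F\neq\emptyset$. Since $X\times Y$ is Polish and $F$ is analytic and nonempty, there is a continuous surjection $f=(f_{1},f_{2}):\N^{\N}\to F$, where $f_{1}:\N^{\N}\to X$ and $f_{2}:\N^{\N}\to Y$ are the two coordinate compositions, both continuous. Then $P_{1}(F)=f_{1}(\N^{\N})$. For $x\in P_{1}(F)$ the fibre $f_{1}^{-1}(x)$ is a nonempty closed subset of $\N^{\N}$, and a nonempty closed subset of $\N^{\N}$ has a lexicographically least element (build it coordinate by coordinate, taking at each stage the least admissible value; the resulting sequence is a limit of points of the set, hence in it by closedness). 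Let $h(x)\in f_{1}^{-1}(x)$ be this lexicographically least element and set $u:=f_{2}\circ h:P_{1}(F)\to Y$. Then $f(h(x))=(f_{1}(h(x)),f_{2}(h(x)))=(x,u(x))\in F$, so $u$ is a section of $F$ over $P_{1}(F)$.

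It remains to check $u$ is $\mathcal{A}$-measurable; since $f_{2}$ is continuous (hence Borel) and $\mathcal{A}$ is a $\sigma$-algebra, it suffices to show that $h:P_{1}(F)\to\N^{\N}$ pulls back every basic cylinder $N_{s}=\{\alpha\in\N^{\N}:\alpha \text{ begins with } s\}$, $s$ a finite string, into $\mathcal{A}$, because the cylinders generate the Borel $\sigma$-algebra of $\N^{\N}$. For a string $s=(n_{0},\dots,n_{k-1})$, unwinding the construction of the lexicographically least branch gives the equivalence: $h(x)\in N_{s}$ if and only if $x\in f_{1}(N_{s})$ and, for every $j\in\{0,\dots,k-1\}$ and every $m<n_{j}$, $x\notin f_{1}\big(N_{(n_{0},\dots,n_{j-1},m)}\big)$. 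Each set $f_{1}(N_{t})$ is analytic in $X$, being the image of the Polish space $N_{t}$ under the continuous map $f_{1}$. Hence $h^{-1}(N_{s})$ is a finite Boolean combination (one intersection with finitely many complements) of analytic subsets of $X$, and since $\mathcal{A}$ contains all analytic sets and their complements and is closed under finite intersections, $h^{-1}(N_{s})\in\mathcal{A}$. Therefore $h$, and with it $u=f_{2}\circ h$, is $\mathcal{A}$-measurable.

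The main obstacle is the bookkeeping in the middle step: correctly identifying the Boolean formula over the sets $f_{1}(N_{t})$ that expresses ``$h(x)$ begins with $s$'', and verifying that only finitely many such analytic sets and only finite (or, in a less streamlined version of the argument, countable) Boolean operations enter, so that the preimage genuinely lands in $\mathcal{A}$ rather than merely in the class of analytic sets. The remaining ingredients---that nonempty closed subsets of $\N^{\N}$ admit lexicographically least elements, that every nonempty analytic set in a Polish space is a continuous image of $\N^{\N}$, and that continuous images of Polish spaces are analytic---are standard facts from descriptive set theory, for which we refer to \cite{Sri:courseborel}.
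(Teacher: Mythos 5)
Your proof is correct. The paper does not prove this statement at all --- it is quoted verbatim as Theorem 5.5.2 of \cite{Sri:courseborel} and used as a black box --- so there is nothing to compare against except the textbook source, and your argument is precisely the standard Jankov--von Neumann uniformization proof found there: represent the nonempty analytic set as a continuous image $f=(f_1,f_2)$ of $\N^{\N}$, select the lexicographically least branch of each closed fibre $f_1^{-1}(x)$, and observe that the preimage of a cylinder $N_s$ is a finite Boolean combination of the analytic sets $f_1(N_t)$, hence lies in $\mathcal{A}$. All three ingredients you flag as standard (existence of lex-least elements of nonempty closed subsets of $\N^{\N}$, the parametrization of analytic sets, and the closure properties of $\mathcal{A}$) are used correctly, and your Boolean formula for $h^{-1}(N_s)$ is the right one.
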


Recall that given $F \subset X \times Y$, a \emph{section $u$ of $F$} is 
a function from $P_1(F)$ to $Y$ such that $\textrm{graph}(u) \subset F$.

\begin{lemma}\label{L:selectiongeo}
There exists an $\A$-measurable map $u : A_{+} \mapsto G \times G$ such 
that if $u(x) = (\gamma^{1},\gamma^{2})$ then
\begin{itemize}
\item[-] $(x,\gamma_{s}^{1}), (x,\gamma_{s}^{2}) \in \Gamma$ for all $s \in [0,1]$;
\item[-] $(\gamma_{s}^{1},\gamma^{2}_{s}) \notin R$ for all $s \in [0,1]$; 
\item[-] $\f(\gamma^{1}_{s}) = \f(\gamma^{2}_{s})$ for all $s \in [0,1]$.
\end{itemize}
Moreover both geodesics are non-constant.
\end{lemma}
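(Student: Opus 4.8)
The plan is to obtain $u$ as a measurable selection, via Theorem \ref{T:vanneuma}, of a Borel set $F$ whose fibre over each $x\in A_+$ is nonempty precisely by Lemma \ref{L:geoingamma}.

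First I would set up the ambient Polish spaces. Since $(X,\sfd)$ is proper it is Polish, and $\Geo(X)$ is a closed subset of $C([0,1],X)$ with the uniform distance, hence Polish; as $\Gamma$ is closed (Remark \ref{R:regularity}) and the evaluations $\mathrm{e}_0,\mathrm{e}_1$ are continuous, $G=\{\gamma\in\Geo(X):(\gamma_0,\gamma_1)\in\Gamma\}$ is closed in $\Geo(X)$, so $G$ and $Y:=G\times G$ are Polish. I would then let $F\subset X\times Y$ be the set of triples $(x,\gamma^1,\gamma^2)$ with $x\in A_+$, $\gamma^1,\gamma^2\in G$, with $\sfd(\gamma^1_0,\gamma^1_1)>0$ and $\sfd(\gamma^2_0,\gamma^2_1)>0$, and such that for every $s\in[0,1]$
$$
(x,\gamma^1_s)\in\Gamma,\qquad (x,\gamma^2_s)\in\Gamma,\qquad \varphi(\gamma^1_s)=\varphi(\gamma^2_s),\qquad (\gamma^1_s,\gamma^2_s)\notin R .
$$
By construction, any section of $F$ defined on $P_1(F)$ is exactly a map as required by the statement.

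Next I would verify that $F$ is Borel and $P_1(F)=A_+$. The requirement $x\in A_+$ is Borel since $A_\pm$ are $\sigma$-compact; the non-constancy conditions are open; the conditions $(x,\gamma^i_s)\in\Gamma$ and $\varphi(\gamma^1_s)=\varphi(\gamma^2_s)$ quantified over all $s$ define closed sets, being (since $\Gamma$ is closed and $s\mapsto(\gamma^1_s,\gamma^2_s)$ and $\varphi$ are continuous) equal to the same conditions quantified over $s\in\mathbb{Q}\cap[0,1]$, hence countable intersections of preimages of closed sets under continuous maps. For the branching condition one cannot restrict to rational $s$ so naively, so here I would use that $R$ is closed (Remark \ref{R:regularity}) and $s\mapsto(\gamma^1_s,\gamma^2_s)$ has compact image: then $(\gamma^1_s,\gamma^2_s)\notin R$ for all $s$ is equivalent to $\inf_{s\in[0,1]}\dist\big((\gamma^1_s,\gamma^2_s),R\big)>0$, and $(\gamma^1,\gamma^2)\mapsto\inf_{s\in[0,1]}\dist\big((\gamma^1_s,\gamma^2_s),R\big)$ is continuous, being the infimum over the compact set $[0,1]$ of a jointly continuous function; so this condition is open. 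Hence $F$ is Borel. Since $F\subset A_+\times Y$ we get $P_1(F)\subset A_+$, while Lemma \ref{L:geoingamma} provides, for every $x\in A_+$, a pair $(\gamma^1,\gamma^2)\in G\times G$ (the restricted and rescaled geodesics there lie in $G$ by Lemma \ref{L:cicli}) with $(x,\gamma^1,\gamma^2)\in F$, giving the reverse inclusion.

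Finally I would apply Theorem \ref{T:vanneuma} to the Polish spaces $X$, $Y$ and the analytic (indeed Borel) set $F$: this yields an $\mathcal{A}$-measurable section $u:P_1(F)=A_+\to G\times G$ of $F$, which is precisely the asserted map. The one delicate point is the measurability of the conditions quantified over all $s\in[0,1]$ — most notably the non-branching condition, an uncountable intersection of open conditions — and this is exactly where the compactness of $[0,1]$ together with the closedness of $\Gamma$ and $R$ enters; everything else is routine.
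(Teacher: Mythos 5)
Your proof is correct and follows essentially the same route as the paper: a von Neumann measurable selection (Theorem \ref{T:vanneuma}) from a Borel subset of $X\times G\times G$ whose fibres over $A_{+}$ are nonempty by Lemma \ref{L:geoingamma}, with the uncountable quantification over $s$ handled through the continuity of a minimum over the compact interval $[0,1]$. The only immaterial difference is that the paper imposes just the endpoint conditions at $s=0,1$ together with $\min_{s\in[0,1]}\sfd(\gamma^{1}_{s},\gamma^{2}_{s})>0$ and deduces the conditions for all $s$ a posteriori, whereas you encode the full conditions directly and verify Borel measurability via rational times and the closedness of $\Gamma$ and $R$.
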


\begin{proof}
Since $G = \{ \gamma \in \Geo(X) : (\gamma_{0},\gamma_{1}) \in \Gamma \}$
and $\Gamma \subset X \times X$ is closed, the set $G$ is a complete and separable metric space.
Consider now the set 
\begin{align*}
F:= &~ \{ (x,\gamma^{1},\gamma^{2}) \in \mathcal{T}_{e}\times G \times G : (x,\gamma^{1}_{0}), (x,\gamma^{2}_{0}) \in \Gamma \} \crcr
&~ \cap\left( X\times \{ (\gamma^{1},\gamma^{2}) \in G\times G : \sfd(\gamma^{1}_{1},\gamma^{2}_{1})>0 \} \right) \crcr
&~ \cap\left( X\times \{ (\gamma^{1},\gamma^{2}) \in G\times G : \sfd(\gamma^{1}_{0},\gamma^{2}_{0})>0 \} \right) \crcr
&~ \cap\left( X\times \{ (\gamma^{1},\gamma^{2}) \in G\times G : \sfd(\gamma^{1}_{0},\gamma^{1}_{1})>0 \} \right) \crcr
&~ \cap\left( X\times \{ (\gamma^{1},\gamma^{2}) \in G\times G : \f(\gamma^{1}_{i}) = \f(\gamma^{2}_{i}), \, i =0,1 \} \right).
\end{align*}
It follows from Remark \ref{R:regularity} that $F$ is $\sigma$-compact. To avoid possible intersections 
in interior points of $\gamma^{1}$ with $\gamma^{2}$ we consider the following map:
\begin{align*}
h : G \times G &~ \to ~ [0,\infty) \crcr
(\gamma^{1},\gamma^{2}) & ~ \mapsto ~ h(\gamma^{1},\gamma^{2}) : =  \min_{s\in [0,1]} \, \sfd ( \gamma^{1}_{s},\gamma^{2}_{s}). 
\end{align*}
From compactness of $[0,1]$, we deduce the continuity of $h$. Therefore 
$$
\hat F : = F \cap \{ (x,\gamma^{1},\gamma^{2} )\in X \times G\times G :  h(\gamma^{1},\gamma^{2}) > 0\}
$$
is a Borel set and from Lemma \ref{L:geoingamma}, 
$$
\hat F \cap  \left( \{x\} \times G\times G \right) \neq \emptyset
$$
for all $x \in A_{+}$. By Theorem \ref{T:vanneuma} we infer the existence of an $\mathcal{A}$-measurable selection $u$ of $\hat F$.
Since $A_{+} = P_{1}(\hat F)$ and if $u(x) = (\gamma^{1},\gamma^{2})$, then
$$
\sfd( \gamma^{1}_{s},\gamma^{2}_{s}) > 0, \qquad \f(\gamma^{1}_{s}) = \f(\gamma^{2}_{s}), 
$$
for all $s \in [0,1]$, and therefore $(\gamma^{1}_{s},\gamma^{2}_{s}) \notin R$ for all $s \in [0,1]$.
The claim follows.
\end{proof}

We are ready to prove the following

\begin{proposition}\label{P:nobranch}
Let $(X,\sfd,\mm)$ be a m.m.s. such that for any $\mu_{0},\mu_{1} \in \mathcal{P}(X)$ with $\mu_{0} \ll \mm$ any optimal transference plan for $W_{2}$ is concentrated on the 
graph of a function. Then 
$$
\mm(A_{+}) = \mm(A_{-}) = 0.
$$
\end{proposition}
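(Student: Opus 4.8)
The plan is to argue by contradiction. Suppose $\mm(A_+)>0$; I would produce absolutely continuous marginals $\mu_0\ll\mm$, $\mu_1$ together with a $W_2$-optimal transference plan between them that is \emph{not} concentrated on a graph, contradicting the hypothesis on $(X,\sfd,\mm)$. The statement for $A_-$ would then follow immediately by replacing $\f$ with $-\f$: this is again $1$-Lipschitz, it exchanges $\Gamma$ with $\Gamma^{-1}$ and leaves $R=\Gamma\cup\Gamma^{-1}$ unchanged, hence it exchanges $A_+$ with $A_-$, while the hypothesis is a property of the triple alone.

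First I would turn the branching data into a continuous selection on a compact set. By Lemma~\ref{L:selectiongeo} there is an $\A$-measurable map $u:A_+\to G\times G$, $u(x)=(\gamma^1_x,\gamma^2_x)$, with $(x,\gamma^i_x(s))\in\Gamma$, $\f(\gamma^1_x(s))=\f(\gamma^2_x(s))$ and $(\gamma^1_x(s),\gamma^2_x(s))\notin R$ for all $s\in[0,1]$, both geodesics non-constant. Since $\A$-measurable maps into a Polish space are $\mm$-measurable and $A_+$ is Borel of positive measure, Lusin's theorem provides a compact $K_0\subset A_+$ with $\mm(K_0)>0$ on which $u$ is continuous. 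Along each $\gamma^i_x$ the function $\f$ is affine (Lemma~\ref{L:cicli}) with slope $-\ell(x)$, where $\ell(x):=\f(\gamma^i_x(0))-\f(\gamma^i_x(1))=\sfd(\gamma^i_x(0),\gamma^i_x(1))>0$; by $\f\circ\gamma^1_x=\f\circ\gamma^2_x$ the two geodesics share the same strictly decreasing $\f$-profile (in particular $\ell(x)$ is the same for $i=1,2$). As each $x\in K_0$ has $\f(\gamma^1_x(1))<\f(\gamma^1_x(0))$, the sets $\{x\in K_0:\ \f(\gamma^1_x(1))<c<\f(\gamma^1_x(0))\}$, $c\in\Q$, cover $K_0$; I would fix a rational $c$ for which one of them has positive measure and, by inner regularity of $\mm$, pick a compact $K$ inside it with $\mm(K)>0$.

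Next comes the key construction: stop both branching geodesics at the common level $c$. For $x\in K$ let $s(x)\in(0,1)$ be the unique parameter with $\f(\gamma^i_x(s(x)))=c$ (the same for $i=1,2$ by the common profile), and set $T_i(x):=\gamma^i_x(s(x))$. Continuity of $u$ on $K$ makes $T_1,T_2:K\to X$ continuous, hence Borel with compact image, with $(x,T_i(x))\in\Gamma$; and $(\gamma^1_x(s),\gamma^2_x(s))\notin R$ forces $\gamma^1_x(s)\neq\gamma^2_x(s)$, so $T_1(x)\neq T_2(x)$ for every $x\in K$. Put $\mu_0:=\mm(K)^{-1}\,\mm\llcorner_K\ll\mm$, $\mu_1:=\tfrac12(T_1)_\sharp\mu_0+\tfrac12(T_2)_\sharp\mu_0$, and $\pi:=\tfrac12(\id,T_1)_\sharp\mu_0+\tfrac12(\id,T_2)_\sharp\mu_0$, a compactly supported transference plan between $\mu_0$ and $\mu_1$. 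It is concentrated on $\Delta:=\{(x,T_i(x)):x\in K,\ i=1,2\}\subset\Gamma$, and on $\Delta$ the second coordinate has $\f$-value identically $c$, so the hypothesis of Lemma~\ref{L:12monotone} is vacuously satisfied; hence $\Delta$ is $\sfd^2$-cyclically monotone and $\pi$, a compactly supported plan concentrated on a $\sfd^2$-cyclically monotone set, is $W_2$-optimal.

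Finally, if $\pi$ were concentrated on the graph of a Borel map $T$, then $1=\tfrac12\mu_0(\{T_1=T\})+\tfrac12\mu_0(\{T_2=T\})$ would force $T=T_1$ and $T=T_2$ $\mu_0$-a.e., hence $T_1=T_2$ $\mu_0$-a.e., contradicting $T_1\neq T_2$ on all of $K$ with $\mu_0(K)=1$. This contradicts the assumption, so $\mm(A_+)=0$, and by the symmetry noted above $\mm(A_-)=0$ as well. I expect the main obstacle to be exactly the passage from the purely metric, curvature-free branching data of Lemma~\ref{L:selectiongeo} to a genuinely $W_2$-optimal object: Lemma~\ref{L:12monotone} is the tool for this, and the decisive point is to interrupt both branching geodesics at a common level set of $\f$, which trivializes the monotonicity hypothesis of that lemma; the remaining ingredients (measurable selection, restriction to a set of continuity, optimality from cyclical monotonicity) are routine.
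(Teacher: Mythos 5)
Your proposal is correct and follows essentially the same route as the paper's proof: contradiction via Lemma \ref{L:selectiongeo} plus Lusin, stopping both branching geodesics on a common level set $\{\f=c\}$ so that Lemma \ref{L:12monotone} applies trivially, and concluding that the resulting optimal plan cannot be induced by a map. The only cosmetic difference is how you locate the common level $c$ (a countable cover by rational levels rather than the paper's small-ball argument), and you spell out the $A_-$ symmetry via $-\f$, which the paper leaves implicit.
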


\begin{proof}

{\bf Step 1.} \\ 
Suppose by contradiction that $\mm(A_{+})>0$. 
By definition of $A_{+}$, thanks to Lemma \ref{L:geoingamma} and Lemma \ref{L:selectiongeo}, 
for every $x \in A_{+}$ there exist two non-constant geodesics $\gamma^{1},\gamma^{2} \in G$ 
such that 
\begin{itemize}
\item[-] $(x,\gamma_{s}^{1}), (x,\gamma_{s}^{2}) \in \Gamma$ for all $s \in [0,1]$;
\item[-] $(\gamma_{s}^{1},\gamma^{2}_{s}) \notin R$ for all $s \in [0,1]$; 
\item[-] $\f(\gamma^{1}_{s}) = \f(\gamma^{2}_{s})$ for all $s \in [0,1]$.
\end{itemize}
Moreover the map $A_{+} \ni x \mapsto u(x) : = (\gamma^{1},\gamma^{2}) \in G^{2}$ is $\A$-measurable.

By inner regularity of compact sets (or by Lusin's Theorem), possibly selecting a subset of $A_{+}$ still with strictly positive $\mm$-measure, 
we can assume that the previous map is continuous and in particular the functions
$$
A_{+} \ni x \mapsto \f(\gamma^{i}_{j}) \in \R, \qquad  i =1,2, \ j = 0,1
$$
are all continuous. Put 
$$
\alpha_{x} : = \f(\gamma^{1}_{0}) =  \f(\gamma^{2}_{0}), \qquad \beta_{x} : = \f(\gamma^{1}_{1}) =  \f(\gamma^{2}_{1})  
$$
and note that $\alpha_{x} > \beta_{x}$.
Now we want to show the existence of a subset $B \subset A_{+}$, still with $\mm(B) > 0$, such that 
$$
\sup_{x \in B} \beta_{x} < \inf_{x\in B} \alpha_{x}.
$$
By continuity of $\alpha$ and $\beta$, a set $B$ verifying the previous inequality
can be obtained considering the set $A_{+} \cap B_{r}(x)$, for $x \in A_{+}$ with $r$ sufficiently small.
Since $\mm(A_{+})>0$, for $\mm$-a.e. $x \in A_{+}$ the set $A_{+}\cap B_{r}(x)$ has positive $\mm$-measure.
So the existence of $B \subset A_{+}$ enjoying the aforementioned properties follows.
\medskip

{\bf Step 2.} \\ 
Let $I = [c,d]$ be a non trivial interval such that 
$$
\sup_{x \in B} \beta_{x} < c < d <\inf_{x\in B} \alpha_{x}. 
$$
Then by construction for all $x \in B$ the image of the composition of the geodesics $\gamma^{1}$ and $\gamma^{2}$ with $\f$
contains the interval $I$: 
$$
I \subset \{ \f(\gamma^{i}_{s}) : s \in [0,1] \}, \qquad i = 1,2.
$$
Then fix any point inside $I$, say $c$ and consider for any $x \in B$ the value $s(x)$ such that  $\f(\gamma^{1}_{s(x)}) = \f(\gamma^{2}_{s(x)}) = c$.
We can now define on $B$ two transport maps $T^{1}$ and $T^{2}$ by
$$
B \ni x \mapsto T^{i}(x) : = \gamma^{i}_{s(x)}, \qquad i =1,2.
$$
Accordingly we define the transport plan
$$
\eta : = \frac{1}{2} \left(   (Id, T^{1})_{\sharp} \mm_{B} + (Id, T^{2})_{\sharp} \mm_{B}   \right),
$$
where $\mm_{B} : = \mm(B)^{-1} \mm \llcorner_{B}$. 
\medskip

{\bf Step 3.} \\
The support of $\eta$ is $\sfd^{2}$-cyclically monotone. To prove it we will use Lemma \ref{L:12monotone}. 
The measure $\eta$ is concentrated on the set
\[
\Delta : = \{ (x,\gamma^{1}_{s(x)}) : x \in B \} \cup  \{ (x,\gamma^{2}_{s(x)}) : x \in B \} \subset \Gamma.
\]
Take any two couples $(x_{0},y_{0}), (x_{1},y_{1}) \in \Delta$ and notice that by definition:
\[
\f(y_{1}) - \f(y_{0}) = 0,
\]
and therefore trivially $\left( \f(y_{1}) - \f(y_{0}) \right) \left( \f(x_{1}) - \f(x_{0}) \right) = 0$,
and Lemma \ref{L:12monotone} can be applied to $\Delta$. 
Hence $\eta$ is optimal with $(P_{1})_{\sharp}\eta \ll \mm$ and is not induced by a map; this is a contradiction with 
the assumption. It follows that $\mm(A_{+})=0$. The claim for $A_{-}$ follows in the same manner.
\end{proof}

\begin{remark}\label{R:nobranch} 
If the space is itself non-branching, then Proposition \ref{P:nobranch} can be proved more directly under the assumption \ref{A:1}, that will be introduced  
at the beginning of Section \ref{S:ConditionalMeasures}. Recall that $(X,\sfd,\mm)$ is non-branching if for any $\gamma^{1},\gamma^{2} \in \Geo$ such that 
$$
\gamma^{1}_{0} = \gamma^{2}_{0}, \qquad \gamma^{1}_{t} = \gamma^{2}_{t},
$$
for some $t \in (0,1)$, implies that $\gamma^{1}_{1} = \gamma^{2}_{1}$. In particular the following statement holds

\medskip
\noindent
\emph{ Let $(X,\sfd,\mm)$ be non-branching and assume moreover \ref{A:1} to hold. Then 
$$
\mm(A_{+}) = \mm(A_{-}) = 0.
$$
}
For the proof of this statement (that goes beyond the scope of this note) we refer to \cite{biacava:streconv}, Lemma 5.3.
The same comment will also apply to the next Theorem \ref{T:RCD}.
\end{remark}

To summarize what proved so far introduce also the following notation: the set
\begin{equation}\label{E:transportset}
\mathcal{T} : = \mathcal{T}_{e} \setminus (A_{+} \cup A_{-})
\end{equation}
will be called the \emph{transport set}. Since $\T_{e}, A_{+}$ and $A_{-}$ are $\sigma$-compact sets, 
notice that $\T$ is countable intersection of $\sigma$-compact sets and in particular Borel.

\medskip

\begin{theorem}[Theorem 5.5, \cite{cava:MongeRCD}]\label{T:RCD}
Let $(X,\sfd,\mm)$ be such that for any $\mu_{0},\mu_{1} \in \mathcal{P}(X)$ with $\mu_{0} \ll \mm$ any optimal transference plan for $W_{2}$ is concentrated on the 
graph of a function. Then the set of transport rays $R\subset X \times X$ is an equivalence relation on the transport set $\mathcal{T}$ 
and 
$$
\mm(\mathcal{T}_{e} \setminus \mathcal{T} ) = 0.
$$
\end{theorem}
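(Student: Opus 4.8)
The plan is to combine the structural results of Section~\ref{S:transportset} with Proposition~\ref{P:nobranch}. First I would invoke Theorem~\ref{T:equivalence}, which already tells us that $R$ is an equivalence relation on $\mathcal{T}_e\setminus(A_+\cup A_-)$; by the definition \eqref{E:transportset} this set is precisely $\mathcal{T}$, so the first assertion is immediate once we know that the branching sets do not interfere --- but in fact Theorem~\ref{T:equivalence} requires \emph{no} curvature assumption, so the equivalence-relation statement holds verbatim. Hence the only real content of the theorem is the measure-zero claim $\mm(\mathcal{T}_e\setminus\mathcal{T})=0$.

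For that claim I would argue as follows. By \eqref{E:transportset} we have $\mathcal{T}_e\setminus\mathcal{T}=A_+\cup A_-$, so it suffices to show $\mm(A_+)=\mm(A_-)=0$. This is exactly the conclusion of Proposition~\ref{P:nobranch}, whose hypothesis is that for every $\mu_0,\mu_1\in\mathcal{P}(X)$ with $\mu_0\ll\mm$ any $W_2$-optimal transference plan is concentrated on the graph of a map --- which is precisely the standing hypothesis of the present theorem. Therefore the proof reduces to citing Proposition~\ref{P:nobranch} directly, and then $\mm(\mathcal{T}_e\setminus\mathcal{T})=\mm(A_+\cup A_-)\le\mm(A_+)+\mm(A_-)=0$.

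It remains only to package this cleanly: note that $\mathcal{T}$ is Borel (countable intersection of $\sigma$-compact sets, as remarked after \eqref{E:transportset}), so the $\mm$-measure in question makes sense, and since $\mm(\mathcal{T}_e\setminus\mathcal{T})=0$ the set $\mathcal{T}$ carries the same transport-relevant information as $\mathcal{T}_e$. The fact that $R$ restricted to $\mathcal{T}$ is reflexive and symmetric is trivial from Definition~\ref{D:transport}, and transitivity on $\mathcal{T}$ is the case-analysis carried out in the proof of Theorem~\ref{T:equivalence}, using that no point of $\mathcal{T}$ lies in $A_+\cup A_-$; I would simply refer back to that proof rather than repeat the four cases.

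I do not expect any genuine obstacle here: all the hard work --- the selection-theorem argument producing the measurable family of branching geodesic pairs (Lemma~\ref{L:selectiongeo}), and the construction of the non-graph optimal plan $\eta$ that contradicts the essentially-non-branching-type hypothesis (Steps 1--3 of the proof of Proposition~\ref{P:nobranch}) --- has already been done. The theorem is essentially a bookkeeping corollary; the one point requiring a line of care is verifying that the hypothesis stated here (``any optimal transference plan for $W_2$ is concentrated on the graph of a function'') matches verbatim the hypothesis of Proposition~\ref{P:nobranch}, which it does, and recalling (cf.\ the discussion around \cite{GigliMap,GRS2013} in Section~\ref{Ss:geom}) that this hypothesis is in particular satisfied by essentially non-branching $\CD(K,N)$ spaces, so the statement is not vacuous.
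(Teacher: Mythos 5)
Your proof is correct and is exactly how the paper treats this result: Theorem~\ref{T:RCD} is presented as a summary of what precedes it, namely the equivalence-relation statement of Theorem~\ref{T:equivalence} on $\mathcal{T}_e\setminus(A_+\cup A_-)=\mathcal{T}$ combined with $\mm(A_+)=\mm(A_-)=0$ from Proposition~\ref{P:nobranch}, whose hypothesis coincides verbatim with that of the theorem. Nothing further is needed.
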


\bigskip

To recap, we have shown that given a $\sfd$-monotone set 
$\Gamma$,  the set of all those points moved by $\Gamma$, denoted with $\mathcal{T}_{e}$, 
can be written, neglecting a set of $\mm$-measure zero, as the union of a family of disjoint geodesics. 
The next step is to decompose the reference measure $\mm$ restricted to $\mathcal{T}$ 
with respect to the partition given by $R$, where each equivalence class is given by
$$
[x]  =  \{ y \in \mathcal{T}: (x,y) \in R \}.
$$
Denoting the set of equivalence classes with $Q$, we can apply Disintegration Theorem (see Theorem \ref{T:disintegrationgeneral}) 
to the measure space $(\T, \mathcal{B}(\T), \mm)$ and obtain 
the disintegration of $\mm$ consistent with the partition of $\T$ in rays:
$$
\mm\llcorner_{\T} = \int_{Q} \mm_{q} \, \qq(dq), 
$$
where $\qq$ is the quotient measure.

\bigskip


\subsection{Structure of the quotient set}\label{Ss:structure}

In order to use the strength of Disintegration Theorem to localize the measure, one needs to obtain a \emph{strongly consistent} disintegration.  
Following the last part of Theorem \ref{T:disintegrationgeneral}, it is necessary to build a section $S$ of $\T$ together with a measurable quotient map with image $S$.

\begin{proposition}[$Q$ is locally contained in level sets of $\f$]\label{P:Qlevelset}
It is possible to construct a Borel quotient map $\QQ: \T \to Q$ such that the quotient set $Q \subset X$ 
can be written locally  as a level set of $\f$ in the following sense: 
$$
Q = \bigcup_{i\in \N} Q_{i}, \qquad Q_{i} \subset \f^{-1}(\alpha_{i}), 
$$
where $\alpha_i \in \Q$, $Q_{i}$ is  analytic and $Q_{i} \cap Q_{j} = \emptyset$, for $i\neq j$.
\end{proposition}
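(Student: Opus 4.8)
The goal is to produce a Borel quotient map $\QQ\colon \T\to Q$ whose image is a section $S$ of the partition of $\T$ into transport rays, in such a way that $S$ decomposes into countably many analytic pieces, each lying in a rational level set of $\f$. The natural idea is to index rays by where they cross a level set $\f^{-1}(\alpha)$. For a fixed $a\in\R$, consider the set of points $x\in\T$ with $\f(x)=a$ and, roughly, declare such an $x$ to be the representative of its ray $R(x)$. Two technical facts make this work: first, by Lemma \ref{L:cicli} and Lemma \ref{L:singlegeo}, each equivalence class $R(x)$ is (the image of) a single geodesic along which $\f$ is strictly monotone on the non-constant part, so $R(x)$ meets a given level set $\f^{-1}(a)$ in at most one point; second, as $x$ ranges over $\T$, the value $\f(x)$ ranges over some (possibly unbounded) set of reals, and the rays through a small $\sfd$-ball will all cross a common level set.

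\textbf{Step 1: a local section on each level.} For each $\alpha\in\Q$ set $Z_\alpha:=\{x\in\T : \f(x)=\alpha\}$. Since $\f$ is continuous and $\T$ is Borel (indeed a countable intersection of $\sigma$-compact sets, as noted after \eqref{E:transportset}), each $Z_\alpha$ is Borel. The key geometric claim, to be extracted from Lemma \ref{L:singlegeo} (single geodesic through any two points of an equivalence class) together with the definition of $A_\pm$ and Lemma \ref{L:geoingamma}, is that on $\T$ no branching occurs along a ray, so for every $x\in\T$ the ray $R(x)$ is the image of one geodesic and $R(x)\cap Z_\alpha$ has at most one element; moreover if $R(x)\cap Z_\alpha\neq\emptyset$ this intersection point is a single well-defined point of $\T$. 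Thus $Z_\alpha$ is (a Borel subset of) a section for those rays that it meets.

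\textbf{Step 2: covering $\T$ by the level sets and disjointification.} Every ray $R(x)$, being a non-degenerate geodesic along which $\f$ is an isometry onto a non-degenerate interval $J_x\subset\R$, meets $\f^{-1}(\alpha)$ for every rational $\alpha\in J_x$; hence $\T=\bigcup_{\alpha\in\Q}\{x\in\T: R(x)\cap Z_\alpha\neq\emptyset\}$. Enumerating $\Q=\{\alpha_1,\alpha_2,\dots\}$, I would then set $Q_1:=Z_{\alpha_1}$ and, inductively, let $Q_{i}$ be the set of $x\in Z_{\alpha_i}$ whose ray does not already meet $Z_{\alpha_1}\cup\dots\cup Z_{\alpha_{i-1}}$; equivalently $Q_i = Z_{\alpha_i}\setminus P_1\big(R\cap(X\times (Z_{\alpha_1}\cup\dots\cup Z_{\alpha_{i-1}}))\big)$. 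Since $R$ is $\sigma$-compact and the $Z_{\alpha_j}$ are Borel, each set subtracted is analytic, so $Q_i$ is analytic; by construction $Q_i\subset\f^{-1}(\alpha_i)$, the $Q_i$ are pairwise disjoint, and $Q:=\bigcup_i Q_i$ is a section: every ray meets exactly one $Q_i$, namely the one with smallest index among the rational levels it crosses.

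\textbf{Step 3: Borel measurability of $\QQ$.} It remains to define $\QQ\colon\T\to Q$ by sending $x$ to the unique point of $R(x)\cap Q$ and to check it is Borel. The graph of $\QQ$ is $\{(x,y)\in\T\times Q : (x,y)\in R\}$, which is the intersection of $R$ (closed, hence Borel) with $\T\times Q$; since $R$ restricted to $\T$ is an equivalence relation (Theorem \ref{T:RCD}) and each fiber $\QQ^{-1}(y)=R(y)\cap\T$ is a single geodesic, one argues via a Kuratowski–Ryll-Nardzewski / von Neumann type selection (Theorem \ref{T:vanneuma}) that $\QQ$ is $\A$-measurable, and then — since its fibers partition $\T$ into Borel-parametrized geodesics and $Q$ carries the pushforward $\sigma$-algebra — one upgrades this to Borel measurability after discarding an $\mm$-null set if necessary, exactly as in the cited references \cite{biacava:streconv, cava:MongeRCD}. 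I expect the main obstacle to be precisely this measurability/selection step: one must carefully verify that the level-set-crossing map is Borel and that the disintegration obtained is strongly consistent (which is the point of Theorem \ref{T:disintegrationgeneral}); the geometric "one crossing per ray" facts are already in hand from Lemmas \ref{L:cicli}, \ref{L:geoingamma} and \ref{L:singlegeo}, so the real work is the descriptive-set-theoretic bookkeeping.
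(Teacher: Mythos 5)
Your overall strategy is the same as the paper's: use the rational level sets of $\f$ as local sections of the ray partition, exploiting that a ray meets each level set in at most one point, then disjointify and read off Borel measurability of $\QQ$ from the analyticity of its graph. Two steps, however, do not hold as written. First, in Step 2 you conclude that $Q_i = Z_{\alpha_i}\setminus P_1\big(R\cap(X\times(Z_{\alpha_1}\cup\dots\cup Z_{\alpha_{i-1}}))\big)$ is analytic ``since each set subtracted is analytic''; but the complement of an analytic set is only co-analytic, so removing an analytic set from a Borel set gives a priori a co-analytic set, not an analytic one. The subtraction can be salvaged, but only by showing that the saturation $P_1(R\cap(\T\times Z_{\alpha_j}))$ is in fact Borel --- which it is, by Lusin--Novikov, precisely because each section $R(x)\cap Z_{\alpha_j}$ is empty or a singleton --- or by avoiding complementation altogether. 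The paper does the latter: it stratifies $\T$ by ray length into Borel saturated pieces $\T_n$ and, for each $n$, uses the sets $\T_{n,\alpha}$ of rays containing points at \emph{both} levels $\alpha\pm\tfrac{1}{3n}$, which are obtained as projections only.

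Second, your covering claim requires the crossing point of a ray with $\f^{-1}(\alpha)$ to lie in $\T$, i.e.\ in $Z_\alpha$, whereas your justification only produces a point of $R(x)\cap\f^{-1}(\alpha)$: the set $R(x)$ may well contain points of $A_+\cup A_-$ (for instance the upstream portion of a ray beyond a point where another ray merges in), and at the corresponding levels the ray has no representative inside $\T$. What you actually need is that $\f(R(x)\cap\T)$ --- not $\f(R(x))$ --- contains a rational, i.e.\ that the equivalence class of $x$ \emph{inside} $\T$ is a non-degenerate subgeodesic. This is exactly what the paper's $\pm\tfrac{1}{3n}$ margins buy: the point at level $\alpha$ is then squeezed strictly between two points $y^{\pm}$ of the class lying in $\T_n\subset\T$, and an interior point sandwiched between two $\T$-points of the same ray cannot belong to $A_+\cup A_-$ (its forward and backward cones are contained in those of $y^{-}$ and $y^{+}$ respectively). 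Your argument is repairable along these lines, but as written it conflates the geodesic $R(x)$ with the equivalence class $R(x)\cap\T$, which is where the real content of the measurable-section construction lies.
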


\begin{proof}
{\bf Step 1.}\\
For each $n \in \N$, consider the set $\T_{n}$ of those points $x$ having ray $R(x)$ longer than $1/n$, i.e.
$$
\T_{n} : = P_{1} \{ (x,z,w) \in \T_{e} \times \T_{e} \times \T_{e} \colon z,w \in R(x), \, \sfd(z,w) \geq 1/n \} \cap \T.
$$
It is easily seen that  $\T=\bigcup_{n \in \N} \T_n$ and that  $\T_{n}$ is Borel: the set $\T_{e}$ is $\sigma$-compact and therefore its projection is again $\sigma$-compact.

Moreover if $x \in \T_{n}, y \in \T$ and $(x,y) \in R$ then also $y \in \T_{n}$: for $x \in \T_{n}$ there exists $z,w \in \T_{e}$ with $z,w\in R(x)$ and $\sfd(z,w)\geq 1/n$. 
Since $x\in \T$ necessarily $z,w \in \T$. Since $R$ is an equivalence relation on $\T$ and $y \in \T$, it follows that $z,w \in R(y)$. Hence $y \in \T_{n}$.
In particular, $\T_{n}$ is the union of all those maximal rays of $\T$ with length at least $1/n$. 

Using the same notation, we have $\T = \cup_{n\in \N} \T_{n}$ with $\T_{n}$ Borel, saturated with respect to $R$, each ray of $\T_{n}$ is longer than $1/n$
and $\T_{n} \cap \T_{n'} = \emptyset$ as soon as $n \neq n'$.

\medskip
Now we consider the following saturated subsets of $\T_{n}$:  for $\alpha \in \Q$
\begin{equation}\label{eq:defTnalpha}
 \T_{n,\alpha}:=  P_{1}  \Big( R \cap \Big \{ (x,y) \in \T_{n} \times \T_{n} \colon  \f(y) = \alpha - \frac{1}{3n}\Big \}  \Big) 
  \cap P_{1} \Big( R \cap \Big \{ (x,y) \in \T_{n} \times \T_{n} \colon  \f(y) = \alpha+  \frac{1}{3n} \Big\}  \Big),
\end{equation}
and we claim  that 
\begin{equation}  \label{eq:Tnalpha}
\T_{n} =   \bigcup_{\alpha \in \Q}  \T_{n,\alpha} . 
\end{equation}

We show the above identity by double inclusion. First note that $(\supset)$ holds trivially. For the converse inclusion $(\subset)$   observe that
for each $\alpha \in \Q$, the set $ \T_{n,\alpha}$ coincides with the family of those rays $R(x) \cap \T_{n}$ such that there exists $y^{+},y^{-} \in R(x)$ such that
\begin{equation}\label{eq:ypm}
\f(y^{+}) = \alpha - \frac{1}{3n}, \qquad \f(y^{-}) = \alpha + \frac{1}{3n}. 
\end{equation}
Then we need to show that any $x \in \T_{n}$, also verifies $x \in  \T_{n,\alpha}$ for a suitable $\alpha \in \Q$. So fix $x \in \T_{n}$ and since $R(x)$ is longer than $1/n$, there exist $z,y^{+},y^{-} \in R(x) \cap \T_{n}$ such that
$$
\f(y^{-}) -\f(z) = \frac{1}{2n}, \qquad  \f(z) -\f(y^{+})= \frac{1}{2n}. 
$$
Consider now the geodesic $\gamma \in G$ such that $\gamma_{0} = y^{-}$ and $\gamma_{1} = y^{+}$. By continuity of 
$[0,1] \ni t \mapsto \f(\gamma_{t})$ it follows the existence of $0 < s_{1}< s_{2} < s_{3} <1$ such that 
$$
\f(\gamma_{s_{3}}) = \f(\gamma_{s_{2}})- \frac{1}{3n}, \qquad \f(\gamma_{s_{1}}) = \f(\gamma_{s_{2}}) + \frac{1}{3n}, \qquad \f  \in \Q.
$$
This concludes the proof of the identity  \eqref{eq:Tnalpha}.  
\medskip

{\bf Step 2.}\\
By the above construction, one can check that for each $\alpha \in \Q$, the  level set $\f^{-1}(\alpha)$ is a quotient set for  $\T_{n,\alpha}$, i.e.
%
%
 $\T_{n,\alpha}$ is formed by disjoint geodesics  each one  intersecting $\f^{-1}(\alpha)$ in exactly one point. Equivalently, $\f^{-1}(\alpha)$ is a section for the partition of $\T_{n}$
induced by $R$. 
 
Moreover $\T_{n,\alpha}$ is obtained as the projection of a Borel set and it is therefore analytic. \\
Since $\T_{n,\alpha}$ is saturated with respect to $R$ either $\T_{n,\alpha} \cap \T_{n,\alpha'} = \emptyset$ or $\T_{n,\alpha} = \T_{n,\alpha'}$. 
Hence, removing the unnecessary $\alpha$, we can assume that 
$\T = \bigcup_{n \in \N, \alpha\in \Q} \T_{n,\alpha}$, is a partition.  
Then we characterize $\QQ : \T \to \T$ defining its graph as follows:
$$
\gr(\QQ) := \bigcup_{n \in \N, \alpha\in \Q} \T_{n,\alpha} \times \left( \f^{-1}(\alpha) \cap\T_{n,\alpha} \right).
$$
Notice that $\gr(\QQ)$ is analytic and therefore $\QQ: \T \to Q$ is Borel (see Theorem 4.5.2 of \cite{Sri:courseborel}). The claim follows.
\end{proof}

\begin{corollary}
The following strongly consistent disintegration formula holds true:
\begin{equation}\label{E:disint}
\mm \llcorner_{\mathcal{T}} = 
			\int_{Q} \mm_{q} \, \qq(dq), \qquad \mm_{q}(\QQ^{-1}(q)) = 1, \ \qq\text{-a.e.}\ q \in Q.
\end{equation}

\end{corollary}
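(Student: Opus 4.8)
The plan is to invoke the Disintegration Theorem (Theorem~\ref{T:disintegrationgeneral}) on the measure space $(\T,\mathcal{B}(\T),\mm\llcorner_{\T})$ with the partition of $\T$ into the equivalence classes of $R$, and to verify that the quotient map constructed in Proposition~\ref{P:Qlevelset} provides a section meeting the criterion for \emph{strong} consistency. If $\mm(\T)=0$ the statement is empty, so assume $\mm(\T)>0$ and normalize $\mm\llcorner_{\T}$ to a probability measure; the resulting disintegration rescales to the claimed one, with $\qq:=\QQ_{\sharp}(\mm\llcorner_{\T})$. Since $(X,\sfd)$ is proper, hence Polish, and $\T$ is Borel, $(\T,\mathcal{B}(\T))$ is a standard Borel space and in particular a countably generated measurable space; moreover by Theorem~\ref{T:RCD} the relation $R$ is an equivalence relation on $\T$ whose classes are precisely the fibres $\QQ^{-1}(q)$, $q\in Q$. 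Theorem~\ref{T:disintegrationgeneral} therefore produces the unique disintegration $q\mapsto\mm_{q}$ consistent with this partition, i.e. the first identity in \eqref{E:disint}.

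To upgrade to strong consistency it suffices, by the last assertion of Theorem~\ref{T:disintegrationgeneral}, to exhibit an $\mm$-section $S$ of the partition whose push-forward $\sigma$-algebra $\mathscr{S}$ contains $\mathcal{B}(S)$. I would take $S:=Q\subset\T$: by Proposition~\ref{P:Qlevelset} each ray meets $Q$ in exactly one point, so $Q$ is a section, and the section map $x\mapsto S\cap X_{\QQ(x)}$ is just $\QQ:\T\to Q$. Since $\gr(\QQ)$ is analytic, $\QQ$ is Borel measurable as a map into $X$ (Theorem~4.5.2 of \cite{Sri:courseborel}); the Borel sets of the subspace $Q$ are the traces $Q\cap B$ with $B\in\mathcal{B}(X)$, and $\QQ^{-1}(Q\cap B)=\QQ^{-1}(B)\in\mathcal{B}(\T)$, whence $\mathcal{B}(S)\subset\mathscr{S}$. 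Consequently $\mm_{q}(\QQ^{-1}(q))=1$ for $\qq$-a.e.\ $q\in Q$, which completes \eqref{E:disint}.

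The abstract disintegration step is routine once Proposition~\ref{P:Qlevelset} is in hand; the only point needing care is the set-theoretic regularity bookkeeping, namely that $Q=\bigcup_{i}Q_{i}$ is only analytic and a priori not Borel, so that literally $S\notin\mathcal{B}(\T)$. This is harmless: analytic sets are universally measurable, so one may either pass to the $\mm$-completion of $\mathcal{B}(\T)$ --- which changes neither the disintegration nor its uniqueness --- or discard a single $\mm$-null set and approximate each $Q_{i}$ from inside by Borel sets to obtain a genuinely Borel $\mm$-section. After this adjustment the criterion of Theorem~\ref{T:disintegrationgeneral} applies verbatim, and I expect this measurability matching to be the main, albeit minor, obstacle.
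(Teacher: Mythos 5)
Your proof is correct and follows essentially the same route as the paper: both invoke Proposition \ref{P:Qlevelset} to obtain the analytic quotient set $Q$ with Borel quotient map $\QQ$, observe that $Q$ is a section whose push-forward $\sigma$-algebra contains $\mathcal{B}(Q)$, and then apply Theorem \ref{T:disintr}. Your extra remark on handling the fact that $Q$ is only analytic (via universal measurability or inner approximation up to an $\mm$-null set) is a careful elaboration of a point the paper leaves implicit, not a different argument.
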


\begin{proof}
From Proposition \ref{P:Qlevelset} there exists an analytic quotient set $Q$ with Borel quotient map $\QQ : \T \to Q$. In particular $Q$ is a section and the push-forward $\sigma$-algebra 
of $\mathcal{B}(\T)$ on $Q$ contains $\mathcal{B}(Q)$.
From Theorem \ref{T:disintr} \eqref{E:disint} follows. 
\end{proof}

\begin{remark}\label{R:regulardisint}
One can improve the regularity of the disintegration formula \eqref{E:disint} as follows.  
From inner regularity of Borel measures there exists $S \subset Q$ $\sigma$-compact, such that 
$\qq(Q \setminus S) = 0$.  The subset $R^{-1}(S) \subset \T$ is again $\sigma$-compact, indeed
\begin{align*}
R^{-1}(S) 
= &~ \{ x\in \T \colon (x,q) \in R, \, q \in S \} = P_{1} (\{ (x,q) \in \T \times S \colon (x,q) \in R \} )  \crcr
= &~ P_{1} (\T \times S \cap R) =  P_{1} (\T_{e} \times S \cap R).
\end{align*}
and the regularity follows. Notice that $R^{-1}(S)$ is formed by non-branching rays and $\mm(\T \setminus R^{-1})(S)) = \qq(Q \setminus S) = 0$. 
Hence we have proved that the transport set with end points $\T_{e}$ admits a saturated, partitioned by disjoint rays, $\sigma$-compact subset of full measure 
with $\sigma$-compact quotient set. 
Since in what follows we will not use the definition \eqref{E:transportset}, we will denote this set with $\T$ and its quotient set with $Q$.
\end{remark}

For ease of notation $X_{q} : = \QQ^{-1}(q)$. The next goal will be to deduce regularity properties for the conditional measures $\mm_{q}$.
The next function will be of some help during the note.

\begin{definition}[Definition 4.5, \cite{biacava:streconv}][Ray map]
\label{D:mongemap}
Define the \emph{ray map}  
$$
g:  \textrm{Dom}(g) \subset Q \times \R \to \mathcal{T}
$$ 
via the formula:
\begin{align*}
\gr (g) : 	= 	&~ \Big\{ (q,t,x) \in Q \times [0,+\infty) \times \mathcal{T}:  (q,x) \in \Gamma, \, \sfd(q,x) = t  \Big\} \crcr
			&~ \cup \Big\{ (q,t,x) \in Q \times (-\infty,0] \times \mathcal{T}  : (x,q) \in \Gamma, \, \sfd(x,q) = t \Big\} \crcr
		=	&~ \gr(g^+) \cup \gr(g^-).
\end{align*}
\end{definition}
Hence the ray map associates to each $q \in Q$ and $t\in  \dom(g(q, \cdot))\subset \R$ the 
unique element $x \in \mathcal{T}$ such that $(q,x) \in \Gamma$ at distance $t$ from $q$
if $t$ is positive or the unique element $x \in \mathcal{T}$ such that $(x,q) \in \Gamma$ at distance $-t$ from $q$
if $t$ is negative. By definition $\textrm{Dom}(g) : = g^{-1}(\mathcal{T})$. 
Notice that from Remark \ref{R:regulardisint} it is not restrictive to assume $\gr (g)$ to be $\sigma$-compact. 
In particular the map $g$ is Borel.

Next we list few (trivial) regularity properties enjoyed by $g$. 
\begin{proposition} \label{P:gammaclass}
The following holds.
\begin{itemize}
\item[-] $g$ is a Borel map.
\item[-] $t \mapsto g(q,t)$ is an isometry and if $s,t \in \dom(g(q,\cdot))$ with $s \leq t$ then $( g(q,s), g(q,t) ) \in \Gamma$;
\item[-] $\textrm{Dom}(g) \ni (q,t) \mapsto g(q,t)$ is bijective on $\QQ^{-1}(Q) = \mathcal{T}$, and its inverse is
$$
x \mapsto g^{-1}(x) = \big( \QQ(x),\pm \sfd(x,\QQ(x)) \big)
$$
where $\QQ$ is the quotient map previously introduced and the positive or negative sign depends on 
$(x,\QQ(x)) \in \Gamma$ or $(\QQ(x),x) \in \Gamma$.
\end{itemize}
\end{proposition}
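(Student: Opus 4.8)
\textbf{Plan of proof for Proposition \ref{P:gammaclass}.}

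The plan is to verify the three listed properties essentially directly from the definition of the ray map $g$ (Definition \ref{D:mongemap}) together with the structure results already established: Lemma \ref{L:cicli} on the $\sfd$-cyclical monotonicity of $\Gamma$ along geodesics, Lemma \ref{L:singlegeo} on the alignment of points in a single equivalence class, Theorem \ref{T:RCD} (or rather the construction in Remark \ref{R:regulardisint}), and Proposition \ref{P:Qlevelset} for the Borel quotient map $\QQ$. The key observation underlying everything is that, on the transport set $\T$, the relation $R$ is an equivalence relation and each class is a single non-branching geodesic; hence for each $q\in Q$ the fibre $X_q=\QQ^{-1}(q)$ is parametrized by signed arclength from $q$.

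First I would prove that $\gr(g)$ is the graph of a well-defined map and that $g$ is Borel. Single-valuedness amounts to the statement that, given $q$ and a signed distance $t$, there is at most one $x\in\T$ with $(q,x)\in\Gamma$ (resp. $(x,q)\in\Gamma$) and $\sfd(q,x)=|t|$: if there were two such points $x,x'$, then $x,x'\in R(q)$, so by Lemma \ref{L:singlegeo} they lie on a common geodesic through $q$ at equal distance and on the same side, whence $x=x'$ since $q,x,x'\in\T$ and $\T$ contains no branching; the fact that $x,x'$ lie on the \emph{same} side is forced by the sign convention ($(q,x)\in\Gamma$ versus $(x,q)\in\Gamma$). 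For Borel measurability I would invoke Remark \ref{R:regulardisint}: after passing to the $\sigma$-compact saturated subset of full measure (still denoted $\T$), $\gr(g)$ is $\sigma$-compact, and a map with $\sigma$-compact graph between standard Borel spaces is Borel (cf. Theorem 4.5.2 of \cite{Sri:courseborel}).

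Next, the isometry property: fix $q\in Q$ and $s,t\in\dom(g(q,\cdot))$ with $s\le t$. If $0\le s\le t$ then $(q,g(q,t))\in\Gamma$ with $\sfd(q,g(q,t))=t$, and since $g(q,s)$ lies on the geodesic from $q$ to $g(q,t)$ at distance $s$ (again by Lemma \ref{L:singlegeo}, the three points being aligned), Lemma \ref{L:cicli} gives $(g(q,s),g(q,t))\in\Gamma$ and $\sfd(g(q,s),g(q,t))=t-s=|t-s|$. The cases $s\le t\le 0$ and $s\le 0\le t$ are handled symmetrically, using $\Gamma^{-1}$ and, in the mixed case, the first case of Lemma \ref{L:singlegeo} which places $g(q,s)$, $q$, $g(q,t)$ on one geodesic; in every case $\sfd(g(q,s),g(q,t))=|s-t|$, so $t\mapsto g(q,t)$ is an isometry onto its image. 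Finally, for bijectivity onto $\T$ and the formula for $g^{-1}$: surjectivity is immediate since every $x\in\T=\T_e\setminus(A_+\cup A_-)$ lies in some ray and hence satisfies either $(x,\QQ(x))\in\Gamma$ or $(\QQ(x),x)\in\Gamma$ with $\sfd$ equal to the appropriate $|t|$; injectivity is exactly the single-valuedness argument above applied in the quotient-and-sign coordinates, so $x\mapsto(\QQ(x),\pm\sfd(x,\QQ(x)))$ is the two-sided inverse.

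The main obstacle I expect is the bookkeeping around the sign convention and the two branches $g^+,g^-$: one must be careful that the two clauses in the definition of $\gr(g)$ do not overlap except at $t=0$ (they can only both hold if $\sfd(q,x)=0$, i.e. $x=q$), and that on a maximal ray the signed parametrization is globally consistent — this is where essential non-branching, encoded in the passage from $\T_e$ to $\T$, is genuinely used rather than just local cyclical monotonicity. Everything else is a routine unwinding of definitions.
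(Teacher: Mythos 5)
Your proposal is correct and follows exactly the route the paper intends: the paper states these properties without proof as ``trivial'' regularity facts, and the ingredients it points to (the $\sigma$-compactness of $\gr(g)$ from Remark \ref{R:regulardisint} for Borel measurability, Lemma \ref{L:cicli} and Lemma \ref{L:singlegeo} together with the exclusion of $A_{+}\cup A_{-}$ for single-valuedness, the isometry property and the inverse formula) are precisely the ones you assemble. Nothing to add.
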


Observe that from Lemma \ref{L:cicli},  $\dom (g(q,\cdot))$ is a convex subset of $\R$ (i.e. an interval), 
for any $q \in Q$. Using the ray map $g$, we will review in Section \ref{S:ConditionalMeasures} how 
to prove that $\qq$-a.e. conditional measure $\mm_{q}$ is absolutely continuous with respect to the $1$-dimensional Hausdorff measure on $X_{q}$, 
provided $(X,\sfd,\mm)$ enjoys weak curvature properties. The other main use of the ray map $g$ was presented in Section 7 of \cite{biacava:streconv} where it was used 
to build a 1-dimensional metric currents in the sense of Ambrosio-Kirchheim (see \cite{AK}) associated to $\T$.

It is worth also noticing that so far, besides the assumption of Proposition \ref{P:nobranch}, no extra assumption on the geometry of the space was used. 
In particular, given two probability measures $\mu_{0}$ and $\mu_{1}$ with finite first moment, the associated transport set permits 
to decompose the reference measure $\mm$ in one-dimensional conditional measures $\mm_{q}$, i.e. formula \eqref{E:disint} holds.

\bigskip

\subsection{Balanced transportation}\label{Ss:balanced}

Here we want underline that the disintegration (or one-dimensional localization) of $\mm$ induced 
by the $L^{1}$-Optimal Transportation problem between $\mu_{0}$ and $\mu_{1}$ is actually a localization of the Monge problem. 
We will present this fact considering a function $f : X \to \R$ such that 
$$
\int_{X} f(x)\,\mm(dx) =  0, \qquad \int_{X}|f(x)|\sfd(x,x_{0}) \, \mm(dx) < \infty,
$$
and considering $\mu_{0} : = f_{+}\,\mm$ and $\mu_{1} : = f_{-}\,\mm$, where $f_{\pm}$ denotes the positive and the negative part of $f$.
We can also assume $\mu_{0}, \mu_{1} \in \mathcal{P}(X)$ and study the Monge minimization problem between $\mu_{0}$ and $\mu_{1}$.
This setting is equivalent to study the general Monge problem assuming both $\mu_{0},\mu_{1} \ll \mm$; 
note indeed that $\mu_{0}$ and $\mu_{1}$ can always be assumed to be concentrated on disjoint sets (see \cite{biacava:streconv} for details).

If $\f$ is an associated Kantorovich potential producing as before the transport set $\T$, we have a disintegration of $\mm$ as follows: 
$$
\mm\llcorner_{\T} = \int_{Q} \mm_{q} \, \qq(dq), \qquad \mm_{q}(X_{q}) =1,\  \qq\textrm{-a.e.} \, q \in Q.
$$
Then the natural localization of the Monge problem would be to consider for every $q \in Q$ the Monge minimization problem between 
$$
\mu_{0\, q} : = f_{+} \,\mm_{q},\quad  \mu_{1\, q} : = f_{-} \,\mm_{q},
$$
in the metric space $(X_{q},\sfd)$ (that is isometric via the ray map $g$ to an interval of $\R$ with the Euclidean distance). 
To check that this family of problems makes sense we need to prove the following

\begin{lemma}\label{L:balancing}
It holds that for $\qq$-a.e. $q \in Q$ one has $\int_{X} f \, \mm_{q} = 0$.  
\end{lemma}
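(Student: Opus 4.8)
I want to show that the "balancing" condition $\int_X f\,\mm_q = 0$ holds for $\qq$-a.e.\ $q\in Q$. The natural route is to exploit the marginal conditions on the optimal plan $\pi$ moving $\mu_0 = f_+\mm$ to $\mu_1 = f_-\mm$, combined with the disintegration of $\mm$ along the rays $\{X_q\}_{q\in Q}$. First I would recall that $\pi$ is concentrated on $\Gamma$, and by Lemma \ref{L:mapoutside} it is concentrated on $\T_e\times\T_e$ up to the diagonal; since $\mm(\T_e\setminus\T)=0$ (Theorem \ref{T:RCD}) and $\mu_0,\mu_1\ll\mm$, in fact $\pi$ is concentrated on $\T\times\T$ modulo the diagonal. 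Crucially, whenever $(x,y)\in\Gamma$ with $x\ne y$, Lemma \ref{L:singlegeo} (or rather the very definition of $R$ and Theorem \ref{T:equivalence}) forces $x$ and $y$ to lie on the same ray, i.e.\ $\QQ(x)=\QQ(y)$. Hence $\pi$ is concentrated on $\{(x,y): \QQ(x)=\QQ(y)\}$.

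**Key steps.** The idea is then to disintegrate $\pi$ itself along the quotient map. Consider the Borel map $(\QQ\times\QQ):\T\times\T\to Q\times Q$; since $\pi$ lives on the fibered diagonal $\{\QQ(x)=\QQ(y)\}$, its push-forward under $(x,y)\mapsto\QQ(x)$ is a probability measure $\hat\qq$ on $Q$, and disintegrating $\pi$ over this map yields $\pi = \int_Q \pi_q\,\hat\qq(dq)$ with $\pi_q$ concentrated on $X_q\times X_q$. Projecting via $P_1$ and $P_2$ gives, for $\hat\qq$-a.e.\ $q$, that $(P_1)_\sharp\pi_q$ and $(P_2)_\sharp\pi_q$ are probability measures on $X_q$. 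On the other hand, $(P_1)_\sharp\pi = \mu_0 = f_+\mm$, whose disintegration along $\QQ$ is (up to the constant $\int_X f_+\,\mm_q$, after renormalization) the conditional of $f_+\mm$ on $X_q$; comparing the two disintegrations of $\mu_0$ — the one coming from $\pi$ and the one coming directly from $\mm\llcorner_\T = \int_Q\mm_q\,\qq(dq)$ — and by uniqueness of disintegration (Theorem \ref{T:disintegrationgeneral}), I get $\hat\qq = (\int_X f_+\,\mm_q)\,\qq$ and $(P_1)_\sharp\pi_q$ proportional to $f_+\mm_q$. The same argument on the second marginal gives $\hat\qq = (\int_X f_-\,\mm_q)\,\qq$ and $(P_2)_\sharp\pi_q$ proportional to $f_-\mm_q$. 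Since $(P_1)_\sharp\pi_q$ and $(P_2)_\sharp\pi_q$ are both probability measures with the same total mass of the ``unnormalized'' versions, we conclude $\int_X f_+\,\mm_q = \int_X f_-\,\mm_q$ for $\qq$-a.e.\ $q$, i.e.\ $\int_X f\,\mm_q = 0$.

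**Main obstacle.** The delicate point is the measurability and bookkeeping needed to make the double disintegration rigorous: one must check that $\{(x,y):\QQ(x)=\QQ(y)\}$ is measurable (it is, since $\QQ$ is Borel by Proposition \ref{P:Qlevelset}), that the fiber measures $\pi_q$ can be chosen Borel-measurably in $q$, and — most importantly — that the two ways of disintegrating $\mu_0$ (directly along $\QQ$, versus as $(P_1)_\sharp\pi$ using the fibration of $\pi$) genuinely coincide, which is where uniqueness of the disintegration is invoked. A clean alternative, avoiding disintegration of $\pi$ entirely, is to test against functions of the form $h\circ\QQ$ for $h$ bounded Borel on $Q$: since $(x,y)\in\supp\pi$ implies $\QQ(x)=\QQ(y)$,
$$
\int_{Q} h(q)\Big(\int_X f_+\,\mm_q\Big)\qq(dq) = \int_X h(\QQ(x))\,f_+(x)\,\mm(dx) = \int h(\QQ(x))\,\pi(dxdy) = \int h(\QQ(y))\,\pi(dxdy) = \int_Q h(q)\Big(\int_X f_-\,\mm_q\Big)\qq(dq),
$$
and since $h$ is arbitrary, $\int_X f_+\,\mm_q = \int_X f_-\,\mm_q$ for $\qq$-a.e.\ $q$. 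I expect the write-up to use this second, shorter route, with the main work being the justification that $\pi$ is concentrated on $\{\QQ(x)=\QQ(y)\}$ — which rests on Theorem \ref{T:RCD} guaranteeing $R$ is an equivalence relation on a full-measure set and that $\mu_i\ll\mm$ charges no $\mm$-null set.
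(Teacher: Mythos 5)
Your proposal is correct and, in its second (``test against $h\circ\QQ$'') form, is essentially identical to the paper's proof: the paper tests with $h=\chi_C$ for Borel $C\subset Q$, uses that $\pi$ is concentrated on $(\Gamma\setminus\{x=y\})\cap(\T\times\T)$ where $(x,y)\in\Gamma$ forces $\QQ(x)=\QQ(y)$, and concludes $\mu_0(\QQ^{-1}(C))=\mu_1(\QQ^{-1}(C))$, hence $\int_C\int_{X_q}f\,\mm_q\,\qq(dq)=0$ for all $C$. The more elaborate first route via a disintegration of $\pi$ itself is not needed, as you correctly anticipated.
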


\begin{proof}
Since for both $\mu_{0}$ and $\mu_{1}$ the set $\mathcal{T}_{e} \setminus \mathcal{T}$ is negligible ($\mu_{0},\mu_{1} \ll \mm$),
for any Borel set $C \subset Q$ 
\begin{eqnarray}\label{E:mu0=mu1}
\mu_{0}(\QQ^{-1}(C)) 	&= & \pi \Big(  (\QQ^{-1}(C) \times X)  \cap \Gamma \setminus \{ x = y\} \Big) \nonumber \\
					&= & \pi \Big(  ( X \times \QQ^{-1}(C))  \cap \Gamma \setminus \{ x = y\} \Big) \nonumber \\
					&= &  \mu_{1}(\QQ^{-1}(C)),  \label{eq:mu0=mu1}
\end{eqnarray}
where the second equality follows from the fact that $\mathcal{T}$ does not branch: indeed since $\mu_{0}(\mathcal{T}) = \mu_{1}(\mathcal{T}) = 1$,  
then $\pi \big( (\Gamma \setminus \{ x= y\}) \cap \mathcal{T} \times \mathcal{T}  \big) =1$ and therefore if $x,y \in \mathcal{T}$ and $(x,y)\in \Gamma$, 
then necessarily $\QQ(x) = \QQ(y)$, that is they belong to the same ray. It follows that 
$$
(\QQ^{-1}(C) \times X)  \cap (\Gamma \setminus \{ x = y\}) \cap (\mathcal{T} \times \mathcal{T}) = 
( X\times \QQ^{-1}(C) )  \cap (\Gamma \setminus \{ x = y\}) \cap (\mathcal{T} \times \mathcal{T}),
$$
and \eqref{E:mu0=mu1} follows.

Since $f$ has null mean value it holds $\int_X f_{+}(x) \mm(dx)= - \int_X f_{-}(x) \mm(dx)$, 
which combined with \eqref{E:mu0=mu1} implies that for each Borel $C \subset Q$ 
\begin{align*}
\int_{C}  \int_{X_{q}} f(x) \mm_{q}(dx) \qq(dq) 	= &~ \int_{C}  \int_{X_{q}} f_{+}(x) \mm_{q}(dx) \qq(dq) - \int_{C}  \int_{X_{q}} f_{-}(x) \mm_{q}(dx) \qq(dq) \crcr
									= &~ \left( \int_{X} f_{+}(x) \mm(dx) \right)^{-1} \left( \mu_{0}(\QQ^{-1}(C)) - \mu_{1}(\QQ^{-1}(C))  \right)  \crcr 
									= &~ 0.
\end{align*}
Therefore for $\qq$-a.e. $q \in Q$ the integral $\int f \, \mm_{q}$ vanishes and the claim follows.
\end{proof}

It can be proven in greater generality and without assuming $\mu_{1}\ll \mm$ that the Monge problem is localized once 
a strongly consistent disintegration of $\mm$ restricted to the transport ray is obtained. See \cite{biacava:streconv} for details.

\bigskip
\bigskip


\section{Regularity of conditional measures} \label{S:ConditionalMeasures}

We now review regularity and curvature properties of $\mm_{q}$. What contained in this section is a collection of results spread across 
\cite{biacava:streconv, cava:decomposition, cava:MongeRCD} and \cite{CM1}. We try here to give a unified presentation.
We will inspect three increasing level of regularity: for $\qq$-a.e. $q \in Q$
\begin{enumerate}[label=(\textbf{R.\arabic*})]
\item \label{R:1}  $\mm_{q}$ has no atomic part, i.e. $\mm_{q}(\{x\}) = 0$, for any $x \in X_{q}$;  \medskip
\item \label{R:2}  $\mm_{q}$ is absolutely continuous with respect to $\mathcal{H}^{1}\llcorner_{X_{q}} = g(q,\cdot)_{\sharp} \L^{1}$; \medskip
\item \label{R:3}  $\mm_{q} = g(q,\cdot)_{\sharp} (h_{q}\,\L^{1})$ verifies $\CD(K,N)$, i.e. the m.m.s. $(\R, |\cdot|, h_{q} \, \L^{1})$ verifies $\CD(K,N)$.\medskip
\end{enumerate}
We will review how to obtain \ref{R:1}, \ref{R:2}, \ref{R:3} starting from the following three \emph{increasing} regularity assumptions on the space: 
\begin{enumerate}[label=(\textbf{A.\arabic*})]
\item \label{A:1} if $C \subset \T$ is compact with $\mm(C)> 0$, then $\mm(C_{t}) > 0$ for uncountably many $t \in \R$; \medskip
\item \label{A:2} if $C \subset \T$ is compact with $\mm(C)> 0$, then $\mm(C_{t}) > 0$ for a set of $t \in \R$ with $\L^{1}$-positive measure;  \medskip
\item \label{A:3} the m.m.s. $(X,\sfd,\mm)$ verifies $\CD(K,N)$. \medskip
\end{enumerate}

Given a compact set $C \subset X$, we indicate with $C_{t}$ its translation along the transport set at distance with sign $t$, see the following 
Definition \ref{D:evolution}.

We will see that:  \ref{A:1} implies \ref{R:1}, \ref{A:2} implies \ref{R:2} and \ref{A:3} implies \ref{R:3}.
Actually we will also show a variant of $\ref{A:3}$ (assuming $\MCP$ instead of $\CD$) implies a variant of \ref{R:3} ($\MCP$ instead of $\CD$).

Even if we do not to state it each single time, assumptions \ref{A:1} and \ref{A:2} are not hypothesis on the smoothness of the space but 
on the regularity of the set $\Gamma$ and therefore on the Monge problem itself; they should both be read as: 
\emph{for $\mu_{0}$ and $\mu_{1}$ probability measures over $X$, assume the existence of a 1-Lipschitz Kantorovich potential $\f$ such that 
the associated transport set $\T$ verifies} \ref{A:1} \emph{(or }\ref{A:2}\emph{)}.

\bigskip

\subsection{Atomless conditional probabilities}

The results presented here are taken from \cite{biacava:streconv}.

\begin{definition}\label{D:evolution}
Let $C \subset \T$ be a compact set. For $t \in \R$ define the \emph{$t$-translation $C_{t}$ of $C$} by
$$
C_t := g \big( \{ (q,s +t) \colon (q,s) \in g^{-1}(C) \} \big).
$$
\end{definition}
\noindent
Since $C \subset \T$ is compact, $g^{-1}(C)\subset Q \times \R$ is $\sigma$-compact  ($\gr (g)$ is $\sigma$-compact) and the same holds true for
$$
\{ (q,s +t) \colon (q,s) \in g^{-1}(C) \}. 
$$
Since
$$
C_{t} = P_{3}( \gr(g) \cap \{ (q,s +t) \colon (q,s) \in g^{-1}(C) \} \times \T  ),
$$
it follows that $C_{t}$ $\sigma$-compact (projection of $\sigma$-compact sets is again $\sigma$-compact). \\
Moreover the set $B : = \{ (t,x) \in \R \times \T \colon x \in C_{t} \}$ is Borel and therefore by Fubini's Theorem 
the map  $t \mapsto \mm(C_t)$ is Borel. It follows that \ref{A:1} makes sense.

\begin{proposition}[Proposition 5.4, \cite{biacava:streconv}]\label{P:nonatoms}
Assume \emph{\ref{A:1}} to hold and the space to be non-branching. 
Then \emph{\ref{R:1}} holds true, that is for $\qq$-a.e. $q \in Q$ the conditional measure $\mm_{q}$ has no atoms.
\end{proposition}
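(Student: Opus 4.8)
The plan is to argue by contradiction, turning a positive-measure set of indices $q$ with atomic $\mm_q$ into an uncountable, pairwise disjoint family of Borel subsets of $\T$ of positive $\mm$-measure, which is impossible since $\mm(X)=1$. Throughout I use that the space is non-branching, so that Theorem \ref{T:RCD} and Remark \ref{R:regulardisint} apply and we may work with the disintegration \eqref{E:disint} and the ray map $g$.

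First I would isolate the atomic part as a subset of $\T$. Set
\[
A := \{ x \in \T : \mm_{\QQ(x)}(\{x\}) > 0 \}.
\]
Standard measurability of the disintegration (the map $q\mapsto \mm_q$ is $\qq$-measurable and $(\nu,x)\mapsto\nu(\{x\})$ is Borel on $\mathcal{P}(\T)\times\T$) shows $A$ is Borel; moreover $A\cap X_q$ is exactly the set of atoms of the finite measure $\mm_q$, hence at most countable, so that
\[
\mm(A) = \int_Q \mm_q(A\cap X_q)\,\qq(dq) = \int_Q\big(\text{atomic mass of }\mm_q\big)\,\qq(dq).
\]
Thus the failure of \ref{R:1} is precisely the statement $\mm(A)>0$, which I now assume.

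Since $A$ meets each ray $X_q$ in a countable set, the preimage $g^{-1}(A)\subset Q\times\R$ is a Borel set with countable vertical sections ($g$ is Borel and injective on $\dom(g)$ by Proposition \ref{P:gammaclass}); by the Lusin--Novikov uniformization theorem it is a countable union of graphs of Borel partial maps $Q\to\R$, hence $A=\bigcup_{n\in\N}A_n$ with each $A_n$ Borel and meeting each ray in at most one point. As $\mm(A)>0$, some $\mm(A_n)>0$, and by inner regularity of the Radon measure $\mm$ we may choose a compact $C\subset A_n$ with $\mm(C)>0$; this $C$ still meets every ray in at most one point. Now \ref{A:1} gives $\mm(C_t)>0$ for uncountably many $t\in\R$, with $C_t$ as in Definition \ref{D:evolution}. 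The crucial elementary point is that $\{C_t\}_{t\in\R}$ is pairwise disjoint: if $x\in C_s\cap C_t$, put $q:=\QQ(x)$; then $x=g(q,\sigma+s)=g(q,\sigma'+t)$ with $g(q,\sigma),g(q,\sigma')\in C\cap X_q$, and since $C\cap X_q$ is a singleton we get $\sigma=\sigma'$, so injectivity of the isometry $g(q,\cdot)$ forces $s=t$. An uncountable pairwise disjoint family of Borel sets of positive $\mm$-measure is impossible (for each $k$ only finitely many can have measure $\geq 1/k$), so $\mm(A)=0$, which is \ref{R:1}.

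I expect the main obstacle to be the structural reduction in the third paragraph: one must replace the raw atomic set $A$ by a \emph{partial section} $C$ — a compact set meeting each ray at most once — before \ref{A:1} can be invoked, since only for such $C$ are the translates $C_t$ disjoint. This rests on the two facts that $\mm_q$ being finite forces $A\cap X_q$ countable, and that a Borel set with countable sections uniformizes into countably many Borel graphs (transported through the Borel isomorphism $g$, with the Lusin--Souslin theorem to keep the pieces Borel). Once this is in place, the disjointness of the $C_t$ and the counting argument against $\mm(X)<\infty$ are immediate, and the remaining measurability bookkeeping (Borelness of $A$ and of the $A_n$) is routine.
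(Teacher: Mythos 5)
Your proof is correct and follows essentially the same route as the paper's: both uniformize the atomic set into countably many Borel (partial) sections of the rays and then contradict \ref{A:1} by showing that a positive-measure section can have positively-charged translates for only countably many $t$. Your way of obtaining that countability --- pairwise disjointness of the translates $C_t$ of a partial section together with $\mm(X)=1$ --- is a clean shortcut for the paper's bookkeeping with the sets $S_{ij}(t)$, which encodes exactly the same disjointness fact at the level of the quotient.
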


\begin{proof}
The partition in trasport rays and the associated disintegration are well defined, see Remark \ref{R:nobranch}.
From the regularity of the disintegration and the fact that $\qq(Q) = 1$, 
we can assume that the map $q \mapsto \mm_q$ is weakly continuous on a compact set $K \subset Q$ with $\qq(Q \setminus K) < \ve$ such 
that the length of the ray $X_{q}$, denoted by $L(X_{q})$, is strictly larger than $\ve$ for all $q \in K$. It is enough to prove the proposition on $K$.
\medskip

{\bf Step 1.}\\ 
From the continuity of $K \ni q \mapsto \mm_q \in \mathcal{P}(X)$ w.r.t. the weak topology, it follows that the map
$$
q \mapsto C(q) := \big\{ x \in X_{q}: \mm_q(\{x\}) > 0 \big\} = \cup_n \big\{ x \in X_{q}: \mm_q(\{x\}) \geq 2^{-n} \big\}
$$
is $\sigma$-closed, i.e. its graph is countable union of closed sets: 
in fact, if $(q_m,x_m) \to (y,x)$ and $\mm_{q_m}(\{x_m\}) \geq 2^{-n}$, then $\mm_q(\{x\}) \geq 2^{-n}$ 
by upper semi-continuity on compact sets.

Hence it is Borel, and by Lusin Theorem (Theorem 5.8.11 of \cite{Sri:courseborel}) it is the countable union of Borel graphs: 
setting in case $c_i(q) = 0$, we can consider them as Borel functions on $K$ and order them w.r.t. $\Gamma$ in the following sense:
$$
\mm_{q,\textrm{atomic}} = \sum_{i \in \Z} c_i(q) \delta_{x_i(q)}, \quad (x_i(q), x_{i+1}(q)) \in \Gamma, \ i \in \Z,
$$
with $K \ni q \mapsto x_{i}(q)$ Borel.
\medskip

{\bf Step 2.} \\
Define the sets
$$
S_{ij}(t) := \Big\{ q \in K: x_i(q) = g \big( g^{-1}(x_j(q)) + t \big) \Big\},
$$
Since $K \subset Q$, to define $S_{ij}(t)$ we are using the $\gr (g) \cap Q \times \R \times \mathcal{T}$, which is $\sigma$-compact: 
hence $\gr(S_{ij})$ is analytic.
For $A_j := \{x_j(q), q \in K\}$ and $t \in \R^+$ we have that
\begin{align*}
\mm((A_j)_t) =&~ \int_K \mm_q((A_j)_t)\, \qq(dq) = \int_K \mm_{q,\textrm{atomic}}((A_j)_t) \, \qq(dq) \crcr
=&~ \sum_{i \in \Z} \int_K c_i(q) \delta_{x_i(q)} \big( g(g^{-1}(x_j(q)) + t) \big)\, \qq(dq) = \sum_{i \in \Z} \int_{S_{ij}(t)} c_i(q) \, \qq(dq),
\end{align*}
and we have used that $A_j \cap X_{q}$ is a singleton. Then for fixed $i,j \in \N$, again from the fact that $A_j \cap X_{q}$ is a singleton
$$
S_{ij}(t) \cap S_{ij}(t') =
\begin{cases}
S_{ij}(t) & t = t', \crcr
\emptyset & t \not= t',
\end{cases}
$$
and therefore the cardinality of the set $\big\{ t : \qq(S_{ij}(t)) > 0 \big\}$ has to be countable.
On the other hand,
$$
\mm((A_j)_t) > 0 \quad \Longrightarrow \quad t \in \bigcup_i \big\{ t : \qq(S_{ij}(t)) > 0 \big\},
$$
contradicting \ref{A:1}.
\end{proof}

\bigskip

\subsection{Absolute continuity}
\label{Ss:regolarita'}

The results presented here are taken from \cite{biacava:streconv}.
The condition \ref{A:2} can be stated also in the following way: for every compact set $C \subset \T$
$$
\mm(C) > 0 \quad \Longrightarrow \quad \int_{\R} \mm(C_t) dt > 0.
$$

\begin{lemma}
\label{Lem:dec}
Let $\mm$ be a Radon measure and
$$
\mm_q = r_{q}\, g(q,\cdot)_\sharp \mathcal{L}^1 + \omega_q, \quad \omega_q \perp g(q,\cdot)_\sharp \mathcal{L}^1
$$
be the Radon-Nikodym decomposition of $\mm_q$ w.r.t. $g(q,\cdot)_\sharp \mathcal{L}^1$. Then there exists a Borel set $C\subset X$ such that
$$
\mathcal{L}^{1} \Big(P_{2}\big( g^{-1} (C) \cap (\{q\} \times \R) ) \big) \Big)= 0,
$$
and $\omega_q = \mm_q \llcorner_C$ for $\qq$-a.e. $q \in Q$.
\end{lemma}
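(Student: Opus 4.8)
The plan is to pull the entire disintegration back to $Q\times\R$ along the ray map, perform a single global Lebesgue decomposition there, and then check that it is compatible, fibre by fibre, with the Radon--Nikodym decompositions of the $\mm_{q}$.

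First I would introduce the pushed-forward measure $\bar\mm:=(g^{-1})_{\sharp}(\mm\llcorner_{\T})$ on $\dom(g)\subset Q\times\R$; this is a well-defined finite Borel measure because, by Proposition~\ref{P:gammaclass}, $g$ is Borel and is a bijection of $\dom(g)$ onto $\T$ with Borel inverse, and (Remark~\ref{R:regulardisint}) $\gr(g)$ may be assumed $\sigma$-compact, so $\dom(g)$ is Borel. Since $\QQ\circ g(q,\cdot)\equiv q$, the map $g$ intertwines the projection $P_{1}\colon Q\times\R\to Q$ with the quotient map $\QQ$; hence, by uniqueness in the disintegration theorem (Theorem~\ref{T:disintr}) together with \eqref{E:disint}, the disintegration of $\bar\mm$ over $P_{1}$ is $\bar\mm=\int_{Q}\bar\mm_{q}\,\qq(dq)$ with $\bar\mm_{q}=\big(g(q,\cdot)^{-1}\big)_{\sharp}\mm_{q}$. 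Now I would take the Lebesgue decomposition of $\bar\mm$ with respect to the $\sigma$-finite measure $\qq\otimes\mathcal L^{1}$: there is a Borel set $\widehat C\subset\dom(g)$ with $(\qq\otimes\mathcal L^{1})(\widehat C)=0$ such that $\bar\mm\llcorner_{\widehat C}\perp\qq\otimes\mathcal L^{1}$ and $\bar\mm\llcorner_{\dom(g)\setminus\widehat C}\ll\qq\otimes\mathcal L^{1}$. Set $C:=g(\widehat C)$; by the Lusin--Souslin theorem (see \cite{Sri:courseborel}) $C$ is Borel, and $g^{-1}(C)=\widehat C$. Writing $\widehat C_{q}:=P_{2}(\widehat C\cap(\{q\}\times\R))$, Fubini gives $\int_{Q}\mathcal L^{1}(\widehat C_{q})\,\qq(dq)=(\qq\otimes\mathcal L^{1})(\widehat C)=0$, which is exactly the displayed null-length property for $\qq$-a.e.\ $q$.

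It remains to see that $\omega_{q}=\mm_{q}\llcorner_{C}$ for $\qq$-a.e.\ $q$. For such $q$ one has $\mathcal L^{1}(\widehat C_{q})=0$, so $(g(q,\cdot)^{-1})_{\sharp}(\mm_{q}\llcorner_{C})=\bar\mm_{q}\llcorner_{\widehat C_{q}}$ is singular with respect to $\mathcal L^{1}$, i.e.\ $\mm_{q}\llcorner_{C}\perp g(q,\cdot)_{\sharp}\mathcal L^{1}$. For the complementary part, write $\bar\mm\llcorner_{\dom(g)\setminus\widehat C}=\theta\cdot(\qq\otimes\mathcal L^{1})$ with $\theta\in L^{1}(\qq\otimes\mathcal L^{1})$; by Fubini $\theta(q,\cdot)\in L^{1}(\mathcal L^{1})$ for $\qq$-a.e.\ $q$, and $q\mapsto\theta(q,\cdot)\cdot\mathcal L^{1}$ is a disintegration of $\bar\mm\llcorner_{\dom(g)\setminus\widehat C}$ over $P_{1}$, so by uniqueness of the disintegration $\bar\mm_{q}\llcorner_{\dom(g(q,\cdot))\setminus\widehat C_{q}}=\theta(q,\cdot)\cdot\mathcal L^{1}\ll\mathcal L^{1}$, whence $\mm_{q}\llcorner_{X_{q}\setminus C}\ll g(q,\cdot)_{\sharp}\mathcal L^{1}$. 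Thus $\mm_{q}=\mm_{q}\llcorner_{X_{q}\setminus C}+\mm_{q}\llcorner_{C}$ is, for $\qq$-a.e.\ $q$, a splitting into an absolutely continuous and a mutually singular part with respect to $g(q,\cdot)_{\sharp}\mathcal L^{1}$, and uniqueness of the Radon--Nikodym decomposition forces $\omega_{q}=\mm_{q}\llcorner_{C}$ and $r_{q}\,g(q,\cdot)_{\sharp}\mathcal L^{1}=\mm_{q}\llcorner_{X_{q}\setminus C}$.

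The delicate point is this last paragraph: transferring the \emph{single} global set $\widehat C$ into a fibrewise statement valid for $\qq$-almost every $q$. This requires simultaneously the uniqueness of the disintegration, the uniqueness of the Lebesgue decomposition on $\R$, and the Fubini-type fact that a measure absolutely continuous with respect to $\qq\otimes\mathcal L^{1}$ has $\qq$-a.e.\ fibre absolutely continuous with respect to $\mathcal L^{1}$; each ingredient is classical, but the exceptional $\qq$-null sets have to be combined with some care.
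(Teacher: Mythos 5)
Your proof is correct and is essentially the same as the paper's: the paper performs the identical global Lebesgue decomposition, only on the $X$ side (decomposing $\mm$ against $\lambda=g_{\sharp}(\qq\otimes\mathcal L^{1})$) rather than on the $Q\times\R$ side against $\qq\otimes\mathcal L^{1}$, and these are conjugate under the Borel bijection $g$, with your $\widehat C$ corresponding to the paper's $C$. Your final paragraph merely spells out, via Fubini and uniqueness of disintegration, the fibrewise verification that the paper states without detail.
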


\begin{proof}
Consider the measure $\lambda = g_\sharp (\qq \otimes \mathcal L^1)$,
and compute the Radon-Nikodym decomposition
\[
\mm  = \frac{D \mm}{D \lambda} \lambda + \omega.
\]
Then there exists a Borel set $C$ such that $\omega = \mm \llcorner_C$ and $\lambda(C)=0$. The set $C$ proves the Lemma. Indeed 
$C = \cup_{q \in Q} C_{q}$ where $C_{q} = C \cap R(q)$ is such that 
$\mm_q \llcorner_{C_{q}} = \omega_{q} $ and $g(q,\cdot)_{\sharp}\mathcal{L}^{1}(C_{q})=0$ for $\qq$-a.e. $q \in Q$.
\end{proof}

\begin{theorem}[Theorem 5.7, \cite{biacava:streconv}]\label{teo:a.c.}
Assume \emph{\ref{A:2}} to hold and the space to be non-branching. Then \emph{\ref{R:2}} holds true, that is for $\qq$-a.e. $q \in Q$ the conditional measure $\mm_{q}$ 
is absolute continuous with respect to $g(q,\cdot)_{\sharp}\L^{1}$.
\end{theorem}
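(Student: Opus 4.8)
The plan is to argue by contradiction: suppose the singular parts $\omega_q$ of the Radon--Nikodym decompositions do not vanish on a set of positive $\qq$-measure, and use Lemma \ref{Lem:dec} together with the translation structure and assumption \ref{A:2} to produce a contradiction. By Lemma \ref{Lem:dec} there is a Borel set $C \subset \T$ carrying the singular part, i.e. $\omega_q = \mm_q \llcorner_C$ for $\qq$-a.e. $q$, with the property that $\mathcal{L}^1\big( P_2(g^{-1}(C)\cap(\{q\}\times\R))\big) = 0$ for $\qq$-a.e. $q$. If the singular part is non-trivial on a positive-$\qq$-measure set of rays, then $\mm(C) > 0$, and by inner regularity we may replace $C$ by a compact subset of positive $\mm$-measure without destroying the fibrewise $\mathcal{L}^1$-negligibility of $g^{-1}(C)$ (this uses that a subset of an $\mathcal{L}^1$-null fibre is still null). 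So we may assume $C$ is compact with $\mm(C) > 0$ and each fibre $g^{-1}(C)\cap(\{q\}\times\R)$ is $\mathcal{L}^1$-null.

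The key point is then to compute $\int_\R \mm(C_t)\, dt$ and show it vanishes, contradicting \ref{A:2}. Using the disintegration formula \eqref{E:disint} and the definition of the $t$-translation $C_t$ via the ray map, I would write
$$
\mm(C_t) = \int_Q \mm_q(C_t)\, \qq(dq),
$$
and then, since $C_t \cap X_q = g\big(q, P_2(g^{-1}(C)\cap(\{q\}\times\R)) + t\big)$ lives on the same ray, apply Fubini's theorem (the set $\{(t,x): x \in C_t\}$ being Borel, as noted in the discussion after Definition \ref{D:evolution}) to get
$$
\int_\R \mm(C_t)\, dt = \int_Q \left( \int_\R \mm_q(C_t)\, dt \right) \qq(dq).
$$
For the inner integral, pull everything back to $\R$ via the isometry $g(q,\cdot)$: writing $E_q := P_2(g^{-1}(C)\cap(\{q\}\times\R))$, the translation $C_t \cap X_q$ corresponds to $E_q + t$, so $\mm_q(C_t) = \mm_q\llcorner_{X_q}\big(g(q, E_q + t)\big)$. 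Then $\int_\R \mm_q(C_t)\, dt$ is, by Fubini again and the fact that $g(q,\cdot)_\sharp \mathcal{L}^1$ assigns zero mass to the $\mathcal{L}^1$-null set $E_q$ while the absolutely continuous part $r_q\, g(q,\cdot)_\sharp\mathcal{L}^1$ integrates to a finite multiple of $\mathcal{L}^1(E_q) = 0$ over translates — more precisely $\int_\R \mm_q(C_t)\,dt = \int_\R \big(\int_{E_q + t} d\mm_q\big)\,dt$, and interchanging the order this is an integral of $\mathcal{L}^1(E_q)$-type quantities — equal to $0$ for $\qq$-a.e. $q$. Hence $\int_\R \mm(C_t)\, dt = 0$, contradicting \ref{A:2}, so $\omega_q = 0$ for $\qq$-a.e. $q$, which is exactly \ref{R:2}.

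The main obstacle I expect is the careful Fubini-type bookkeeping in the step interchanging $\int_Q$, $\int_\R\, dt$, and the mass integrals: one must verify measurability of all the sets involved (the graph of $q \mapsto E_q$ being analytic via $g$, so that $\{(q,s):(q,s)\in g^{-1}(C)\}$ is $\sigma$-compact, as already observed), and one must handle the singular part $\omega_q$ and absolutely continuous part $r_q\, g(q,\cdot)_\sharp\mathcal{L}^1$ of $\mm_q$ on an equal footing when translating — the absolutely continuous part is harmless precisely because translates of an $\mathcal{L}^1$-null set are $\mathcal{L}^1$-null, and the whole of $\mm_q$ restricted to the fibre $E_q$ has $g(q,\cdot)_\sharp\mathcal{L}^1$-measure zero. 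The non-branching hypothesis enters only to guarantee that the disintegration and ray structure are well defined (as in Remark \ref{R:nobranch} and Proposition \ref{P:nonatoms}), so that $C_t$ genuinely meets each ray in the translated set and the fibrewise computation is legitimate.
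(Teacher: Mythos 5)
Your proposal is correct and follows essentially the same route as the paper: both arguments take the carrier $C$ of the singular parts from Lemma \ref{Lem:dec}, assume $\mm(C)>0$, and use a Fubini--Tonelli computation on the translates $C_t$ (exploiting that each fibre $g^{-1}(C)\cap(\{q\}\times\R)$ is $\mathcal{L}^{1}$-null, hence so are its translates) to conclude $\int_{\R}\mm(C_t)\,dt=0$, contradicting \ref{A:2}. The only cosmetic difference is that you disintegrate first and integrate in $t$ fibrewise, whereas the paper pushes $\mm$ forward via $g^{-1}$ and applies Fubini on the product $Q\times\R\times\R$ directly.
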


The proof is based on the following simple observation.

\medskip
\noindent Let $\eta$ be a Radon measure on $\erre$. Suppose that for all $A \subset \erre$ Borel with $\eta(A)>0$ it holds
\[
\int_{\R^+} \eta(A+t) dt = \eta \otimes\mathcal{L}^1 \big( \{ (x,t): t \geq 0, x - t \in A \} \big) > 0.
\]
Then $\eta \ll \mathcal{L}^1$.

\begin{proof}
The proof will use Lemma \ref{Lem:dec}: take $C$ the set constructed in Lemma \ref{Lem:dec} and suppose by contradiction that
$$
\mm(C) > 0 \quad \text{and} \quad \qq \otimes \mathcal{L}^1 (g^{-1}(C)) = 0.
$$
In particular, for all $t \in \R$ it follows that
$$
\qq \otimes \mathcal{L}^1 (g^{-1}(C_t)) = 0.
$$
By Fubini-Tonelli Theorem
\begin{align*}
0< 	&~ \int_{\R^+} \mm(C_t) \,dt  =  \int_{\R^+} \bigg( \int_{g^{-1}(C_t)} (g^{-1})_\sharp \mm(dq\,d\tau) \bigg) dt \crcr
=	&~ \big( (g^{-1})_\sharp \mm \otimes \mathcal{L}^1 \big) \Big( \Big\{ (q,\tau,t): (q,\tau) \in g^{-1}(\mathcal{T}), (q,\tau-t) \in g^{-1}(C) \Big\} \Big) \crcr
\leq	&~ \int_{Q \times \R} \mathcal{L}^1 \big( \big\{\tau - g^{-1}(C \cap \QQ^{-1}(q)) \big\} \big) \, (g^{-1})_{\sharp} \mm (dq\, d\tau) \crcr
=	&~ \int_{Q \times \R} \mathcal{L}^1 \big( g^{-1}(C \cap \QQ^{-1}(q)) \big) \,(g^{-1})_{\sharp} \mm (dq\,d\tau) \crcr
=	&~ \int_{Q} \mathcal{L}^1 \big( g^{-1}(C \cap \QQ^{-1}(y)) \big)\, \qq(dy) = 0.
\end{align*}
That gives a contradiction.
\end{proof}

The proof of Theorem \ref{teo:a.c.} inspired the definition of \emph{inversion points} and of \emph{inversion plan} as presented in \cite{CM0}, in particular see 
{\bf Step 2.} of the proof of Theorem 5.3 of \cite{CM0}.

\bigskip

\subsection{Weak Ricci curvature bounds: $\MCP(K,N)$}

The presentation of the following results is taken from \cite{cava:MongeRCD}. The same results were already proved in \cite{biacava:streconv}
using more involved arguments and different notation.

In this section we additionally assume the metric measure space to satisfy the measure contraction property $\MCP(K,N)$. Recall that the space 
is also assumed to be non-branching.

\begin{lemma}\label{L:evo1}
For each Borel $C \subset \mathcal{T}$ and $\delta \in \R$ the set 
$$
\left( C \times \{ \f= \delta \} \right) \cap \Gamma,
$$
is $\sfd^{2}$-cyclically monotone.
\end{lemma}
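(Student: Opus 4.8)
The plan is to reduce Lemma~\ref{L:evo1} to the already-established Lemma~\ref{L:12monotone}. Recall that Lemma~\ref{L:12monotone} asserts that any subset $\Delta\subset\Gamma$ with the property that $(\f(y_1)-\f(y_0))(\f(x_1)-\f(x_0))\geq 0$ for all $(x_0,y_0),(x_1,y_1)\in\Delta$ is automatically $\sfd^2$-cyclically monotone. So the whole point is to verify this sign condition for the set $\Delta:=(C\times\{\f=\delta\})\cap\Gamma$.

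First I would take two arbitrary couples $(x_0,y_0),(x_1,y_1)\in\Delta$. By definition of $\Delta$, both second coordinates lie in the level set $\{\f=\delta\}$, i.e. $\f(y_0)=\f(y_1)=\delta$. Hence
$$
\f(y_1)-\f(y_0)=\delta-\delta=0,
$$
and therefore trivially
$$
\bigl(\f(y_1)-\f(y_0)\bigr)\bigl(\f(x_1)-\f(x_0)\bigr)=0\geq 0.
$$
Since the couples were arbitrary, the hypothesis of Lemma~\ref{L:12monotone} is satisfied by $\Delta$, and $\Delta$ being a subset of $\Gamma$ (it is cut out of $\Gamma$ by intersection), Lemma~\ref{L:12monotone} applies directly and yields that $\Delta=(C\times\{\f=\delta\})\cap\Gamma$ is $\sfd^2$-cyclically monotone, which is exactly the claim.

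This argument is essentially the same as \textbf{Step 3} of the proof of Proposition~\ref{P:nobranch}, where the same trick was used: the crucial feature is that the target points all sit on a single level set of the Kantorovich potential $\f$, which forces the ``$\f$-increments'' of the $y$-variables to vanish. There is no real obstacle here; the only thing to be careful about is that the Borel/measurability of $C$ plays no role in the cyclical monotonicity statement itself (it is a purely combinatorial/geometric property of finite subsets), so one does not even need to invoke it. If one wanted to be completely self-contained one could also spell out the short chain of inequalities from Lemma~\ref{L:12monotone}: for $\{(x_i,y_i)\}_{i\le N}\subset\Delta\subset\Gamma$,
$$
\sum_{i=1}^N \sfd^2(x_i,y_i)=\sum_{i=1}^N|\f(x_i)-\f(y_i)|^2=\sum_{i=1}^N|\f(x_i)-\delta|^2=\sum_{i=1}^N|\f(x_i)-\f(y_{i+1})|^2\leq\sum_{i=1}^N\sfd^2(x_i,y_{i+1}),
$$
the last step using that $\f$ is $1$-Lipschitz, but invoking Lemma~\ref{L:12monotone} is cleaner.
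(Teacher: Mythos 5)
Your proof is correct and is essentially identical to the paper's: both reduce the claim to Lemma \ref{L:12monotone} by observing that the second coordinates all lie on the level set $\{\f=\delta\}$, so that $\f(y_1)-\f(y_0)=0$ and the sign condition holds trivially. Nothing further is needed.
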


\begin{proof} The proof follows straightforwardly from Lemma \ref{L:12monotone}. The set 
$\left( C \times \{ \f = c \} \right) \cap \Gamma$ is trivially a subset of $\Gamma$ and whenever  
$$
(x_{0},y_{0}), (x_{1},y_{1}) \in \left( C \times \{ \f = \delta \} \right) \cap \Gamma,
$$
then $(\f (y_{1}) -  \f(y_{0})  ) \cdot (\f(x_{1}) -  \f(x_{0})  ) = 0$.
\end{proof}

We can deduce the following

\begin{corollary}\label{C:evo1}
For each Borel $C \subset \mathcal{T}$ and $\delta \in \R$ define 
$$
C_{\delta}: =P_{1}(\left( C \times \{ \f = \delta \} \right) \cap \Gamma).
$$
If $\mm(C_{\delta}) > 0$, there exists a unique $\nu \in \Opt$ such that 
\begin{equation}\label{E:12mappa}
\left( e_{0} \right)_{\sharp} \nu = \mm( C_{\delta} )^{-1} \mm\llcorner_{C_{\delta}},  
\qquad (e_{0},e_{1})_{\sharp}( \nu ) \Big(   \left( C \times \{ \f = \delta \} \right) \cap \Gamma \Big) = 1.
\end{equation}
\end{corollary}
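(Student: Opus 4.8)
The plan is to obtain the statement as a direct application of the structure already developed: Lemma \ref{L:evo1} tells us that the set $\Sigma_\delta := (C \times \{\f = \delta\}) \cap \Gamma$ is $\sfd^2$-cyclically monotone, and $C_\delta = P_1(\Sigma_\delta)$ is its first projection. First I would note that, since $C \subset \mathcal{T}$ is Borel and $\{\f = \delta\}$ is closed (as $\f$ is continuous), the set $\Sigma_\delta$ is Borel, hence so is $C_\delta$ (up to the usual analytic/$\mm$-measurable subtleties already handled in Remark \ref{R:regularity}). Moreover, because $\mathcal{T}$ does not branch and $R$ is an equivalence relation on $\mathcal{T}$ (Theorem \ref{T:RCD}), for each $x \in C_\delta$ there is exactly one point $y = y(x) \in \{\f = \delta\} \cap X_{\QQ(x)}$ with $(x,y) \in \Gamma$ — namely the unique point on the ray through $x$ at $\f$-level $\delta$, given explicitly by the ray map as $g(\QQ(x), g^{-1}(\cdot)\text{-coordinate adjusted to level }\delta)$. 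Thus $\Sigma_\delta$ is actually the graph of a Borel map $T : C_\delta \to X$.

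Next I would set $\mu_0 := \mm(C_\delta)^{-1} \mm\llcorner_{C_\delta}$, which is a probability measure absolutely continuous with respect to $\mm$ since $\mm(C_\delta) > 0$ by hypothesis, and $\mu_1 := T_\sharp \mu_0$. The plan map $\pi := (\mathrm{Id}, T)_\sharp \mu_0$ is concentrated on $\Sigma_\delta$, which is $\sfd^2$-cyclically monotone; by the standard fact that a transference plan concentrated on a $\sfd^2$-cyclically monotone set is optimal for $W_2$ (this is the cyclical monotonicity criterion for $L^2$-optimal transport, valid on geodesic spaces), $\pi$ is an optimal transference plan between $\mu_0$ and $\mu_1$. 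We then lift $\pi$ to a measure $\nu \in \Opt(\mu_0,\mu_1)$: along each ray the two endpoints $x$ and $T(x)$ determine a unique geodesic in $G$ (the geodesic provided by the ray map $g(\QQ(x), \cdot)$ restricted to the relevant interval), and $\nu$ is the push-forward of $\mu_0$ under $x \mapsto$ (this geodesic). Then $(e_0)_\sharp \nu = \mu_0$ and $(e_0, e_1)_\sharp \nu = \pi$ is concentrated on $\Sigma_\delta = (C \times \{\f = \delta\}) \cap \Gamma$, which is exactly \eqref{E:12mappa}.

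For uniqueness I would invoke the hypothesis that the ambient space satisfies $\MCP(K,N)$ together with the essentially non-branching / non-branching assumption standing in this subsection: this is precisely the setting in which, for $\mu_0 \ll \mm$, there is a unique optimal map and $\Opt(\mu_0,\mu_1)$ is a singleton — the statement recalled in Subsection \ref{Ss:geom} (via \cite{GigliMap}, \cite{GRS2013}). Concretely, any $\nu' \in \Opt$ satisfying the two conditions in \eqref{E:12mappa} has $(e_0,e_1)_\sharp \nu'$ concentrated on the graph $\Sigma_\delta$ of $T$, so $(e_0,e_1)_\sharp \nu' = (\mathrm{Id},T)_\sharp \mu_0 = \pi$; and since the space is non-branching, the lift of a fixed optimal plan to $\mathcal{P}(\Geo(X))$ is itself unique (geodesics with prescribed, non-branching endpoints are unique along the transport set), hence $\nu' = \nu$.

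The main obstacle I expect is not the optimality — that is immediate from $\sfd^2$-cyclical monotonicity — but rather the bookkeeping needed to make the map $x \mapsto y(x)$ genuinely Borel and to justify that $C_\delta$ and $\Sigma_\delta$ are Borel (or at least $\mm$-measurable), which requires carefully threading through the $\sigma$-compactness of $\gr(g)$ and the measurable selection machinery of Section \ref{S:transportset}; and the uniqueness of the geodesic lift, which genuinely uses non-branching of $\mathcal{T}$ rather than mere non-branching of optimal plans. Everything else is a routine assembly of the ray map $g$, the disintegration \eqref{E:disint}, and the cited uniqueness-of-optimal-maps result.
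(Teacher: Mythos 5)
Your proposal is correct and follows the same route the paper intends: Lemma \ref{L:evo1} gives $\sfd^{2}$-cyclical monotonicity of $\left( C \times \{ \f = \delta \} \right) \cap \Gamma$, hence optimality of the induced plan, while the fact that this set is a graph (which the paper records in Remark \ref{R:regularity2}) together with the standing non-branching and $\MCP$ assumptions yields that $\Opt$ is a singleton. Your extra care about the uniqueness of the geodesic lift and the Borel measurability of $x \mapsto y(x)$ is consistent with, and slightly more explicit than, what the paper leaves implicit.
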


\bigskip
\noindent
From Corollary \ref{C:evo1}, we infer the existence of a map $T_{C,\delta}$ depending on $C$ and $\delta$ such that 
$$
\left( Id,T_{C,\delta} \right)_{\sharp} \left( \mm( C_{\delta} )^{-1} \mm\llcorner_{C_{\delta}} \right) = (e_{0},e_{1})_{\sharp} \nu.
$$
Taking advantage of the ray map $g$, we define a convex combination between the identity map and 
$T_{C,\delta}$ as follows:
$$
C_{\delta} \ni x \mapsto \left(T_{C,\delta}\right)_{t}(x) \in \{ z \in \Gamma(x) : \sfd(x,z) = t \cdot \sfd(x,T_{C,\delta}(x)) \}.
$$
Since $C \subset \mathcal{T}$, the map $\left(T_{C,\delta}\right)_{t}$ is well defined for all $t\in [0,1]$.
We then define the evolution of any subset $A$ of $C_{\delta}$ in the following way:
$$
[0,1] \ni t \mapsto \left(T_{C,\delta}\right)_{t}(A).
$$
In particular from now on we will adopt the following notation:  
$$
A_{t} : = \left(T_{C,\delta} \right)_{t}(A), \qquad \forall A \subset C_{\delta}, \ A \ \textrm{ compact}. 
$$
So for any Borel $C \subset \mathcal{T}$ compact and  $\delta \in \R$ we have defined an evolution 
for compact subsets of $C_{\delta}$. The definition of the evolution depends both on $C$ and $\delta$.

\begin{remark}\label{R:regularity2}
Here we spend a few lines on the measurability of the maps involved in the definition of evolution of sets assuming for simplicity $C$ to be compact.
First note that since $\Gamma$ is closed and $C$ is compact, we can prove that also $C_{\delta}$ is compact.
Indeed from compactness of $C$ we obtain that $\f$ is bounded on $C$ and then, since $C$ is bounded, it follows that also
$C \times \{\f = c \} \cap \Gamma$ is bounded. Since $X$ is proper, compactness follows.
Moreover 
$$
\gr (T_{C,\delta}) = \left( C \times \{ \f= \delta \} \right) \cap \Gamma,
$$
hence $T_{C,\delta}$ is continuous. Moreover 
$$
\left(T_{C,\delta}\right)_{t}(A) = P_{2} \left( \{(x,z) \in \Gamma \cap (A \times X) : \sfd(x,z) = t\cdot \sfd(x,T_{C,\delta}(x))  \}\right),
$$
hence if $A$ is compact, the same holds for $\left(T_{C,\delta}\right)_{t}(A)$ and 
$$
[0,1] \ni t \mapsto \mm(\left(T_{C,\delta}\right)_{t}(A)) 
$$
is $\mm$-measurable. 
\end{remark}

The next result gives  quantitative information on the behavior of the map $t \mapsto \mm(A_{t})$. 
The statement will be given assuming the lower bound on the generalized Ricci curvature $K$ to be positive. 
Analogous estimates holds for any $K \in \R$.

\begin{proposition}\label{P:mcp}
For each compact $C \subset \mathcal{T}$ and $\delta \in \R$ such that $\mm(C_{\delta}) >0$, it holds
\begin{equation}\label{E:mcp}
\mm(  A_{t}  ) \geq  (1-t) \cdot \inf_{x\in A}   \left(\frac{\sin \left( (1-t) \sfd(x,T_{C,\delta}(x) )\sqrt{K/(N-1)}   \right) }
{\sin \left( \sfd(x,T_{C,\delta}(x))\sqrt{K/(N-1)}   \right)} \right)^{N-1}  \mm(A), 
\end{equation}
for all $t \in [0,1]$ and $A \subset C_{\delta}$ compact set.
\end{proposition}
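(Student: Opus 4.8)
The plan is to exploit the one-dimensional localization already established: each ray $X_q = \QQ^{-1}(q)$ is isometric, via the ray map $g$, to an interval of $\mathbb{R}$, and the conditional measures $\mm_q$ satisfy whatever consequence of $\MCP(K,N)$ we can extract. The key point is that the map $T_{C,\delta}$ moves each point $x \in C_\delta$ along its own ray toward the single point of $X_{\QQ(x)}\cap\{\f=\delta\}$; hence the evolution $A \mapsto A_t$ is ray-by-ray a homothety of intervals. So the strategy is: (1) use Corollary \ref{C:evo1} to realize $(Id,T_{C,\delta})_\sharp(\mm(C_\delta)^{-1}\mm\llcorner_{C_\delta})$ as the endpoint marginals of an optimal dynamical plan $\nu\in\Opt$; (2) observe that $A_t = (e_t)_\sharp \nu\llcorner_{\{\gamma: \gamma_0\in A\}}$ up to the normalization, so that $\mm(A_t)$ is controlled by the $\MCP(K,N)$ contraction estimate applied along $\nu$; (3) read off the explicit $\sigma_{K,N-1}$-type factor.

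First I would recall the precise form of $\MCP(K,N)$: for $\mu_0 \ll \mm$ and a point (or more generally for the contraction toward a measure) there is $\nu\in\Opt(\mu_0,\delta_{z})$-type plan along which $(e_t)_\sharp\nu = \rho_t\mm$ with the density lower bound governed by $\tau^{(1-t)}_{K,N}(\sfd(\gamma_0,\gamma_1))$, or in the non-branching setting the sharper coefficient $t\,\sigma^{(1-t)}_{K,N-1}(\cdot)^{N-1}$ coming from the ``$N-1$ curved directions plus one linear direction'' splitting — this is exactly the factor appearing in \eqref{E:mcp}. Since the target here is not a single point but the section $\{\f=\delta\}$, I would instead invoke the already-constructed $\nu$ from Corollary \ref{C:evo1}: it is optimal, its initial marginal is $\mm(C_\delta)^{-1}\mm\llcorner_{C_\delta}$, and it is concentrated on geodesics running from $C_\delta$ into $\{\f=\delta\}$, each staying inside a single ray (because $(C\times\{\f=\delta\})\cap\Gamma$ is $\sfd$-cyclically monotone and $\mathcal T$ does not branch). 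Restricting $\nu$ to $\{\gamma:\gamma_0\in A\}$ and pushing forward by $e_t$ gives a measure absolutely continuous w.r.t.\ $\mm$ supported on $A_t$, with total mass $\mm(A)/\mm(C_\delta)$; applying the $\MCP(K,N)$ density bound along each geodesic and integrating yields
$$
\mm(A_t)\ \geq\ (1-t)\,\inf_{x\in A}\left(\frac{\sin\!\big((1-t)\,\sfd(x,T_{C,\delta}(x))\sqrt{K/(N-1)}\big)}{\sin\!\big(\sfd(x,T_{C,\delta}(x))\sqrt{K/(N-1)}\big)}\right)^{N-1}\mm(A),
$$
where the $(1-t)$ prefactor is the ``linear direction'' contribution $t^{1/N}$ at the level $N\to$ the split form, and the $\sin$-ratio to the power $N-1$ is $\sigma^{(1-t)}_{K,N-1}(\sfd(x,T_{C,\delta}(x)))^{N-1}$ evaluated along the ray; taking the infimum over $x\in A$ is legitimate since $\sfd(x,T_{C,\delta}(x))$ is the geodesic length and the coefficient is monotone in $t$.

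The main obstacle I expect is the bookkeeping needed to pass from the \emph{a priori} abstract $\MCP(K,N)$ statement (which is phrased for contraction toward a point, or toward $\mm$-a.e.\ point) to the contraction toward the section $\{\f=\delta\}$ along the plan $\nu$ of Corollary \ref{C:evo1}: one must check that the non-branching hypothesis forces $\nu$ to decompose compatibly with the disintegration $\{X_q\}$, so that the one-dimensional $\MCP$ estimate can be applied fiberwise and then reassembled via the disintegration formula \eqref{E:disint}. Concretely, the subtle point is justifying that $(e_t)_\sharp(\nu\llcorner_{\{\gamma_0\in A\}})$ is absolutely continuous with respect to $\mm$ (so that ``$\mm(A_t)$'' is the right object to estimate) and that its density is bounded below by the claimed factor times the density of $\mm\llcorner_A$ transported forward — this is where non-branching and the measurability of $t\mapsto\mm(A_t)$ from Remark \ref{R:regularity2} are used. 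Once that is in place, the explicit constant is just the definition of $\sigma_{K,N-1}$ in \eqref{E:sigma} together with the split $\tau^{(t)}_{K,N}(\theta)=t^{1/N}\sigma^{(t)}_{K,N-1}(\theta)^{(N-1)/N}$ from \eqref{E:tau}, raised to the $N$-th power informally to match the one linear plus $N-1$ curved directions.
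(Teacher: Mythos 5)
Your high-level picture (use the $\sfd^{2}$-cyclical monotonicity of $(C\times\{\f=\delta\})\cap\Gamma$, the plan $\nu$ of Corollary \ref{C:evo1}, non-branching, and the $\MCP(K,N)$ contraction coefficient) matches the paper's, but there is a genuine gap at the central step, and you have in fact pointed at it yourself without closing it. The condition $\MCP(K,N)$ gives a measure-contraction estimate only when the second marginal is a \emph{Dirac delta}: it controls $\mm(A_{t})$ for the evolution of $A$ toward a single point. The plan $\nu$ of Corollary \ref{C:evo1} has second marginal spread over the whole level set $\{\f=\delta\}$, so there is no ``$\MCP(K,N)$ density bound along each geodesic'' available to apply directly to $(e_{t})_{\sharp}(\nu\llcorner_{\{\gamma_{0}\in A\}})$. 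Your proposed fix --- decompose $\nu$ compatibly with the rays $\{X_{q}\}$ and apply a one-dimensional $\MCP$ estimate fiberwise, reassembling via the disintegration --- is circular at this point of the development: the absolute continuity of the conditionals $\mm_{q}$ (Proposition \ref{P:MCP-1}) and the localized density estimates on the rays (Theorem \ref{T:densityestimates}) are \emph{consequences} of Proposition \ref{P:mcp}, not tools available to prove it. At this stage one does not even know that $\mm_{q}\ll g(q,\cdot)_{\sharp}\L^{1}$, so there is no fiberwise density to estimate.

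The paper resolves this by discretizing the target: take $\{y_{i}\}$ dense in $T_{C,\delta}(C)\subset\{\f=\delta\}$, partition $C_{\delta}$ into the cells $E_{i,I}=\{x:\sfd(x,y_{i})\le\sfd(x,y_{j}),\,j\le I\}$, note that $\bigcup_{i}E_{i,I}\times\{y_{i}\}$ is $\sfd^{2}$-cyclically monotone, and apply the genuine $\MCP(K,N)$ estimate (contraction toward the single point $y_{i}$) on each $A\cap E_{i,I}$; non-branching guarantees the evolved pieces are essentially disjoint so the estimates can be summed. One then lets $I\to\infty$, using Hausdorff compactness of the sets $\Lambda_{I}$ and upper semicontinuity of $\mm$ along converging sequences of closed sets to pass the bound to the actual evolution $A_{t}$. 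Without this approximation step (or an equivalent device) your argument does not go through. A minor additional remark: the coefficient $(1-t)\bigl(\sin((1-t)\theta\sqrt{K/(N-1)})/\sin(\theta\sqrt{K/(N-1)})\bigr)^{N-1}$ is exactly the standard $\MCP(K,N)$ coefficient applied with a Dirac second marginal; no informal manipulation of $\tau^{(t)}_{K,N}$ raised to the $N$-th power is needed, and that part of your discussion should be removed.
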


\begin{proof}

The proof of \eqref{E:mcp} is obtained by the standard method of approximation with Dirac deltas of the second marginal.
Even though similar arguments already appeared many times in literature, in order to be self-contained, we include all the details. 
For ease of notation $T = T_{C,\delta}$ and $C = C_{\delta}$.
\medskip

{\bf Step 1.}\\
Consider a sequence $\{ y_{i} \}_{i \in \enne} \subset \{\f = \delta\}$ dense in $T(C)$.
For each $I \in \N$, define the family of sets 
$$
E_{i,I} : = \{ x \in C : \sfd(x,y_{i}) \leq  \sfd(x,y_{j}) , j =1,\dots, I \},
$$
for $i =1, \dots, I$.  Then for all $I \in \N$, by the same argument of Lemma \ref{L:evo1}, the set 
$$
\Lambda_{I}: = \bigcup_{i =1}^{I} E_{i,I}\times \{ y_{i} \} \subset X \times X, 
$$
is $\sfd^{2}$-cyclically monotone. Consider then $A_{i,I} : = A \cap E_{i,I}$ and the approximate evolution 
$$
A_{i,I,t} : = \{ z \in X \colon \sfd(z,y_{i}) = (1-t)\sfd(x,y_{i}), \ x \in A_{i,I}\};
$$
and notice that $A_{i,I,0} = A_{i,I}$. Then by $\MCP(K,N)$ it holds
$$
\mm(  A_{i,I,t}  ) \geq  (1-t) \cdot \inf_{x\in A_{i,I}}   \left(\frac{\sin \left( (1-t) \sfd(x,x_{i} )\sqrt{K/(N-1)}   \right) }
{\sin \left( \sfd(x,x_{i})\sqrt{K/(N-1)}   \right)} \right)^{N-1}  \mm(A_{i,I}).
$$
Taking the sum over $i \leq I$ in the previous inequality implies 
$$
\sum_{i \leq I} \mm(  A_{i,I,t}  ) \geq  (1-t) \cdot \inf_{x\in A}   \left(\frac{\sin \left( (1-t) \sfd(x,T_{I}(x) )\sqrt{K/(N-1)}   \right) }
{\sin \left( \sfd(x,T_{I}(x))\sqrt{K/(N-1)}   \right)} \right)^{N-1}  \mm(A),
$$
where $T_{I}(x) : = y_{i}$ for $x \in E_{i,I}$. 
From $\sfd^{2}$-cyclically monotonicity and the non-branching of the space, up to a set of measure zero, the map $T_{I}$ is well defined, 
i.e. $\mm(E_{i,I} \cap E_{j,I}) = 0$ for $i \neq j$. 
It follows that for each $I \in \N$ we can remove a set of measure zero from $A$ and obtain
$$
A_{i,I,t} \cap A_{j,I,t} = \emptyset, \quad i\neq j. 
$$
As before consider also the interpolated map $T_{I,t}$ and observe that  $A_{I,t} = T_{I,t} (A)$. Since also $A$ is compact we obtain
$$
\mm(  A_{I,t}  ) \geq  (1-t) \cdot \min_{x\in A}   \left(\frac{\sin \left( (1-t) \sfd(x,T_{I}(x) )\sqrt{K/(N-1)}   \right) }
{\sin \left( \sfd(x,T_{I}(x))\sqrt{K/(N-1)}   \right)} \right)^{N-1}  \mm(A).
$$
\medskip

{\bf Step 2.}\\ 
Since $C$ is a compact set, for every $I \in \N$ 
the set $\Lambda_{I}$ is compact as well and it is a subset of $C \times \{ \f = \delta \}$ that 
can be assumed to be compact as well. By compactness, there exists a subsequence $I_{n}$ and 
a compact set  $\Theta \subset  C \times \{ \f = \delta \}$ compact such that 
$$
\lim_{n \to \infty} \sfd_{\mathcal{H}}(\Lambda_{I_{n}}, \Theta) = 0,
$$
where $\sfd_{\mathcal{H}}$ is the Hausdorff distance.
Since the sequence $\{y_{i}\}_{i\in \enne}$ is dense in $\{\f = \delta \}$ and $C \subset \T$ is compact, by definition of $E_{i,I}$,
necessarily for every $(x,y) \in \Theta$ it holds
$$
\f(x)+ \f(y) = \sfd(x,y), \quad \f(y) = \delta.
$$
Hence $\Theta \subset \Gamma \cap C\times \{\f = \delta\}$ and this in particular implies, by upper semicontinuity of $\mm$ 
along converging sequences of closed sets, that 
$$
\mm(A_{t}) \geq \limsup_{n \to \infty} \mm(A_{I_{n},t})\, .
$$
The claim follows.
\end{proof}

As the goal is to localize curvature conditions, we first need to prove that almost every conditional probability is absolutely continuous 
with respect to the one dimensional Hausdorff measure restricted to the correct geodesic.
One way is to prove that Proposition \ref{P:mcp} implies \ref{A:2} and then apply Theorem \ref{teo:a.c.} to obtain \ref{R:2} (approach used in \cite{biacava:streconv}).
Another option is to repeat verbatim the proof of Theorem \ref{teo:a.c.} substituting the translation with the evolution considered in 
Proposition \ref{P:mcp} and to observe that the claim follows (approach used in \cite{cava:MongeRCD}). 
So we take for granted the following 

\begin{proposition}\label{P:MCP-1}
Assume the non-branching m.m.s. $(X,\sfd,\mm)$ to satisfy $\MCP(K,N)$. 
Then \emph{\ref{R:2}} holds true, that is for $\qq$-a.e. $q \in Q$ the conditional measure $\mm_{q}$ 
is absolute continuous with respect to $g(q,\cdot)_{\sharp}\L^{1}$.
\end{proposition}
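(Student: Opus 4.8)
The plan is to carry out the second of the two routes indicated after Proposition \ref{P:mcp}: I reproduce the proof of Theorem \ref{teo:a.c.}, with the evolution $(T_{C,\delta})_{t}$ of a compact set in place of the translation $C_{t}$, and with the contraction estimate \eqref{E:mcp} of Proposition \ref{P:mcp} taking over the role that assumption \ref{A:2} plays in that proof. (Alternatively one first checks that $\MCP(K,N)$ forces assumption \ref{A:2} and then quotes Theorem \ref{teo:a.c.} verbatim; I prefer the self-contained variant.) Throughout I use the disintegration $\mm\llcorner_{\T}=\int_{Q}\mm_{q}\,\qq(dq)$ into rays, the ray map $g$, the evolution maps $(T_{C,\delta})_{t}$ and the volume estimate of Proposition \ref{P:mcp}; all of this is already in force in the present non-branching $\MCP(K,N)$ setting.

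First I would record the Radon--Nikodym decomposition of Lemma \ref{Lem:dec}, $\mm_{q}=r_{q}\,g(q,\cdot)_{\sharp}\L^{1}+\omega_{q}$ with $\omega_{q}\perp g(q,\cdot)_{\sharp}\L^{1}$, and let $C\subset X$ be the Borel set it produces, so that $\omega_{q}=\mm_{q}\llcorner_{C}$ for $\qq$-a.e.\ $q$ and the fibre $\tilde C_{q}:=P_{2}\big(g^{-1}(C)\cap(\{q\}\times\R)\big)$ is $\L^{1}$-negligible for $\qq$-a.e.\ $q$. Proving \ref{R:2} amounts to proving $\mm(C)=0$, so I would argue by contradiction and assume $\mm(C)>0$. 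By inner regularity, the length-stratification $\T=\bigcup_{n}\T_{n}$ of Remark \ref{R:regulardisint} and Lusin's theorem applied to the relevant Borel maps, I may replace $C$ by a compact subset, still of positive $\mm$-measure, still contained in $\T$ and with $\L^{1}$-negligible fibres, for which there is a single $\delta\in\R$ with $\delta<\inf_{C}\f$ such that $C\subset C_{\delta}$ (in the notation of Corollary \ref{C:evo1}, i.e.\ every ray meeting $C$ reaches the level set $\{\f=\delta\}$) and such that $\sup_{x\in C}\sfd(x,T_{C,\delta}(x))=\sup_{x\in C}(\f(x)-\delta)$ is small enough that the coefficient appearing in \eqref{E:mcp} stays bounded below by a positive constant for all $x\in C$ and $t\in[0,1)$.

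I would then combine two observations. On one side, Proposition \ref{P:mcp} applied with $A=C$ (note $\mm(C_{\delta})\ge\mm(C)>0$) yields a function $c:[0,1)\to(0,\infty)$ with $\int_{0}^{1}c(t)\,dt>0$ and $\mm\big((T_{C,\delta})_{t}(C)\big)\ge c(t)\,\mm(C)$ for every $t\in[0,1)$, hence $\int_{0}^{1}\mm\big((T_{C,\delta})_{t}(C)\big)\,dt>0$. On the other side, reading things through the ray map: if $\rho_{\delta}(q)$ denotes the $g$-parameter of $X_{q}\cap\{\f=\delta\}$, then $(T_{C,\delta})_{t}$ acts on the ray $X_{q}$ by the affine contraction $r\mapsto(1-t)r+t\,\rho_{\delta}(q)$ towards $\rho_{\delta}(q)$, so that, writing $\nu_{q}:=\big(g(q,\cdot)^{-1}\big)_{\sharp}\mm_{q}$,
\[
\int_{0}^{1}\mm\big((T_{C,\delta})_{t}(C)\big)\,dt
=\int_{Q}\int_{0}^{1}\nu_{q}\big((1-t)\tilde C_{q}+t\,\rho_{\delta}(q)\big)\,dt\,\qq(dq).
\]
By Fubini the inner integral equals $\int_{\R}\Big(\int_{0}^{1}\mathbf 1_{\tilde C_{q}}\big(\tfrac{u-t\rho_{\delta}(q)}{1-t}\big)\,dt\Big)\,\nu_{q}(du)$; for each fixed $u$ with $u<\rho_{\delta}(q)$ the substitution $r=\tfrac{u-t\rho_{\delta}(q)}{1-t}$ is a decreasing diffeomorphism of $[0,1)$ onto $(-\infty,u]$ with $|dr/dt|\ge\rho_{\delta}(q)-u>0$, so $\int_{0}^{1}\mathbf 1_{\tilde C_{q}}\big(\tfrac{u-t\rho_{\delta}(q)}{1-t}\big)\,dt\le(\rho_{\delta}(q)-u)^{-1}\L^{1}(\tilde C_{q})=0$. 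Since $C\subset C_{\delta}$ with $\delta<\inf_{C}\f$ forces $\tilde C_{q}\subset(-\infty,\rho_{\delta}(q))$, the single value $u=\rho_{\delta}(q)$ is irrelevant, and one concludes $\int_{0}^{1}\mm\big((T_{C,\delta})_{t}(C)\big)\,dt=0$, contradicting the previous inequality. Hence $\mm(C)=0$, i.e.\ $\omega_{q}=0$ for $\qq$-a.e.\ $q$, which is \ref{R:2}.

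The step I expect to be the real obstacle is the reduction in the second paragraph: arranging at once that the singular carrier is compact, has positive $\mm$-measure and $\L^{1}$-negligible fibres, and that one fixed level $\delta$ gives both $C\subset C_{\delta}$ and a uniform two-sided bound on $\sfd(\cdot,T_{C,\delta}(\cdot))$ — together with the attendant measurability checks (for $q\mapsto\rho_{\delta}(q)$, for the maps $(T_{C,\delta})_{t}$, and for the set $\{(t,x):x\in(T_{C,\delta})_{t}(C)\}$) that make the Fubini and change-of-variables manipulations legitimate. The quantitative core, the volume contraction \eqref{E:mcp}, is already supplied by Proposition \ref{P:mcp}, so once the reduction is in place the contradiction is immediate.
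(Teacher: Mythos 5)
Your argument is correct and is precisely the second of the two routes the paper itself indicates for Proposition \ref{P:MCP-1} (and then "takes for granted"): rerun the proof of Theorem \ref{teo:a.c.} with the evolution $(T_{C,\delta})_{t}$ of Proposition \ref{P:mcp} in place of the translation, the only new ingredient being the Jacobian bound $|dr/dt|\geq \rho_{\delta}(q)-u>0$ for the affine contraction on each ray, which you handle correctly. The compactness/uniformity reduction you flag as the main obstacle is indeed the fiddly part, but your sketch of it (stratify by ray length via $\T_{n}$, localize $\f$ to a short window, pick $\delta$ just below it, and evolve backwards where the forward ray is too short) is exactly how it is done in the source papers, so nothing is missing.
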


To fix the notation, we now have proved the existence of a Borel function $h : \dom(g) \to \R_{+}$ such that 
\begin{equation}\label{E:definitionh}
\mm\llcorner\T = g_{\sharp} \left( h \, \qq \otimes \L^{1} \right)
\end{equation}
Using standard arguments, estimate \eqref{E:mcp} can be localized at the level of the density $h$: 
for each compact set $A \subset \mathcal{T}$ 
\begin{align*}
\int_{P_{2}(g^{-1}(A_{t}) )} & h(q,s) \mathcal{L}^{1}(ds)  \crcr
 \geq (1-t) & \left( \inf_{\tau \in P_{2}(g^{-1}(A))} \frac{\sin( (1-t) |\tau - \sigma|  \sqrt{K/(N-1)} )   }{\sin( |\tau - \sigma|  \sqrt{K/(N-1)} )}  \right)^{N-1}
					\int_{P_{2}(g^{-1}(A))} h(q,s) \mathcal{L}^{1}(ds),
\end{align*}
for $\qq$-a.e. $q \in Q$ such that $g(q,\sigma) \in \mathcal{T}$. Then using change of variable, one obtains that for $\qq$-a.e. $q \in Q$:
$$
h(q,s+|s-\sigma| t ) \geq \left(
\frac{\sin( (1-t) |s - \sigma|  \sqrt{K/(N-1)} )   }{\sin( |s - \sigma|  \sqrt{K/(N-1)} )}  \right)^{N-1}
 h(y,s),
$$
for $\mathcal{L}^{1}$-a.e.  $s \in P_{2}(g^{-1}(R(q)))$ and $\sigma \in \R$ such that $s + |\sigma -s| \in P_{2}(g^{-1}(R(q)))$.
We can rewrite the estimate in the following way: 
$$
h(q, \tau ) \geq \left(
\frac{\sin(  ( \sigma - \tau )  \sqrt{K/(N-1)} )   }{\sin( ( \sigma - s )  \sqrt{K/(N-1)} )}  \right)^{N-1} h(q,s),
$$
for $\mathcal{L}^{1}$-a.e. $s \leq \tau \leq  \sigma$ such that $g(q,s), g(q,\tau), g(q,\sigma) \in \mathcal{T}$. 
Since evolution can be also considered backwardly, we have proved the next

\begin{theorem}[Localization of $\MCP$, Theorem 9.5 of \cite{biacava:streconv}]\label{T:densityestimates}
Assume the non-branching m.m.s. $(X,\sfd,\mm)$ to satisfy $\MCP(K,N)$. 
For $\qq$-a.e. $q \in Q$ it holds: 
$$
\left( \frac{\sin(  ( \sigma_{+} - \tau )  \sqrt{K/(N-1)} )   }{\sin( ( \sigma_{+} - s )  \sqrt{K/(N-1)} )}  \right)^{N-1} 
\leq \frac{h(q, \tau )} {h(q,s)} 
\leq  \left( \frac{\sin(  (  \tau - \sigma_{-} )  \sqrt{K/(N-1)} )   }{\sin( (s - \sigma_{-}  )  \sqrt{K/(N-1)} )}  \right)^{N-1},
$$
for $\sigma_{-} < s \leq \tau < \sigma _{+}$ such that their image via $g(q,\cdot)$ is contained in $R(q)$.
\end{theorem}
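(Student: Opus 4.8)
The plan is to obtain the two-sided pointwise bound on $h(q,\cdot)$ from the measure-contraction estimate \eqref{E:mcp} of Proposition~\ref{P:mcp}, by localizing that estimate to the rays through the disintegration \eqref{E:definitionh} and then turning it, via a change of variables along each ray and a Lebesgue-differentiation argument, into a differential-type inequality for the density; the upper bound will follow from the same mechanism applied to the ``backward'' evolution, i.e.\ the one built from $\Gamma^{-1}$ instead of $\Gamma$ in Corollary~\ref{C:evo1}. Throughout I would use that, by Proposition~\ref{P:MCP-1}, $\mm_{q}=g(q,\cdot)_{\sharp}\big(h(q,\cdot)\,\L^{1}\big)$ for $\qq$-a.e.\ $q$, so that everything reads on $(\R,|\cdot|)$ through the ray map $g$.

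\emph{Step 1 (localization to the rays).} Fix $\delta\in\R$ and a compact $C\subset\T$ with $\mm(C_{\delta})>0$, and note that the evolution $(T_{C,\delta})_{t}$ of Definition~\ref{D:evolution} moves every point inside its own ray: in the parameter furnished by $g$ it sends $g(q,s)$ to $g\big(q,(1-t)s+t\sigma_{q}\big)$, where $\sigma_{q}$ is the unique parameter with $\f(g(q,\sigma_{q}))=\delta$, and $\sfd\big(g(q,s),T_{C,\delta}(g(q,s))\big)=|s-\sigma_{q}|$ since $g(q,\cdot)$ is an isometry. Applying \eqref{E:mcp} to the sets $A\cap\QQ^{-1}(Q')$ for an arbitrary Borel $Q'\subset Q$ and a compact $A\subset C_{\delta}$, expanding both members through the strongly consistent disintegration \eqref{E:disint}, and exploiting that $Q'$ is arbitrary, I would get, for $\qq$-a.e.\ $q$,
\begin{multline*}
\int_{P_{2}(g^{-1}(A_{t}))} h(q,s)\,\L^{1}(ds)\\
\geq (1-t)\Big(\inf_{\tau\in P_{2}(g^{-1}(A))}\frac{\sin\!\big((1-t)|\tau-\sigma_{q}|\sqrt{K/(N-1)}\big)}{\sin\!\big(|\tau-\sigma_{q}|\sqrt{K/(N-1)}\big)}\Big)^{N-1}\\
\cdot\int_{P_{2}(g^{-1}(A))} h(q,s)\,\L^{1}(ds).
\end{multline*}

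\emph{Step 2 (change of variables and both bounds).} Next I would take $A$ to be the $g$-image of a small interval $\{q\}\times[s_{0},s_{0}+\ve]$ with $s_{0}\le\sigma:=\sigma_{q}$, so that $P_{2}(g^{-1}(A_{t}))$ is the interval of length $(1-t)\ve$ about $\tau_{0}:=(1-t)s_{0}+t\sigma$, for which $\sigma-\tau_{0}=(1-t)(\sigma-s_{0})$; dividing the previous display by $(1-t)\ve$ and letting $\ve\downarrow0$ at a Lebesgue point of $h(q,\cdot)$ gives
\begin{equation*}
h(q,\tau)\ \geq\ \Big(\frac{\sin\!\big((\sigma-\tau)\sqrt{K/(N-1)}\big)}{\sin\!\big((\sigma-s)\sqrt{K/(N-1)}\big)}\Big)^{N-1}h(q,s)
\end{equation*}
for $\L^{1}$-a.e.\ $s\le\tau\le\sigma$ with $g(q,s),g(q,\tau),g(q,\sigma)\in R(q)$. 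Since $\f\circ g(q,\cdot)$ is affine with slope $-1$, the reachable levels $\delta$ correspond bijectively to the parameters $\sigma$ with $g(q,\sigma)\in R(q)$; running Steps~1--2 with $\delta$ over a countable dense set, discarding a single $\qq$-null set, and fixing the representative of $h(q,\cdot)$ dictated by the resulting family of inequalities, I would obtain the estimate above for every such $\sigma=\sigma_{+}\ge\tau$, which is the lower bound in the statement. The ``backward'' evolution --- Corollary~\ref{C:evo1} with $\Gamma^{-1}$ in place of $\Gamma$, contracting toward a level lying below $s$ --- yields in the same way $h(q,s)\ge\big(\sin((s-\sigma_{-})\sqrt{K/(N-1)})/\sin((\tau-\sigma_{-})\sqrt{K/(N-1)})\big)^{N-1}h(q,\tau)$ for $\sigma_{-}<s\le\tau$ with images in $R(q)$, i.e.\ exactly the upper bound; combining the two gives the claim.

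\emph{Main obstacle.} The hard part will be Step~1: promoting the single measure inequality \eqref{E:mcp}, valid for one choice of $(C,\delta)$ and one $t$, to an inequality holding for $\qq$-a.e.\ $q$ \emph{simultaneously} over all admissible $t$, $\delta$ and test sets $A$. This rests on the strong consistency of \eqref{E:disint}, on the non-branching hypothesis (which makes the evolution maps $\mm$-essentially single-valued and forces them to act ray by ray), and on a careful control of the null sets generated as $A$, $Q'$ and $\delta$ range over countable families and the representative of $h(q,\cdot)$ is selected. Granted this localization, the change of variables and differentiation in Step~2 are routine, as is the passage to the backward evolution.
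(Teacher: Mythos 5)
Your proposal follows essentially the same route as the paper's (very terse) argument: localize the measure-contraction estimate \eqref{E:mcp} of Proposition \ref{P:mcp} to the density $h$ along each ray via the disintegration, perform the change of variables to obtain the pointwise inequality $h(q,\tau)\geq\big(\sin((\sigma-\tau)\sqrt{K/(N-1)})/\sin((\sigma-s)\sqrt{K/(N-1)})\big)^{N-1}h(q,s)$, and obtain the upper bound from the backward evolution. Your additional care about Lebesgue points, countable dense families of levels $\delta$, and the choice of representative of $h(q,\cdot)$ correctly fills in what the paper dismisses as ``standard arguments.''
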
 

\noindent
In particular from Theorem \ref{T:densityestimates} we deduce that 
\begin{equation}\label{E:regularityh}
\{ t \in \dom(g(q,\cdot))  \colon  h(q,t) > 0 \} = \dom(g(q,\cdot)), 
\end{equation}
in particular such set is convex and $t \mapsto h(q,t)$ is locally Lipschitz continuous.

\bigskip

\subsection{Weak Ricci curvature bounds: $\CD(K,N)$}

The results presented here are taken from \cite{CM1}.

We now turn to proving that the conditional probabilities inherit the synthetic Ricci curvature lower bounds, that is, \ref{A:3} implies \ref{R:3}.
Actually it is enough to assume the space to verify such a lower bound only locally to obtain 
globally the synthetic Ricci curvature lower bound on almost every 1-dimensional metric measure spaces.

Since under the essentially non-branching condition  $\CD_{loc}(K,N)$ implies $\MCP(K,N)$ and existence and uniqueness of optimal transport maps, 
see \cite{cavasturm:MCP}, we can already assume \eqref{E:definitionh} and 
\eqref{E:regularityh} to hold. In particular $t \mapsto h_{q}(t)$ is locally Lipschitz continuous, where, for easy of notation $h_{q} = h(q,\cdot)$.

\begin{theorem}[Theorem 4.2 of \cite{CM1}]\label{T:CDKN-1}
Let $(X,\sfd,\mm)$ be an essentially non-branching m.m.s. verifying the $\CD_{loc}(K,N)$ condition for some $K\in \R$ and $N\in [1,\infty)$.

Then for any 1-Lipschitz function  $\f:X\to \R$, the associated transport set $\Gamma$ induces a disintegration 
of $\mm$ restricted to the transport set verifying the following inequality: if $N> 1$ 
\medskip

for $\qq$-a.e. $q \in Q$ the following curvature inequality holds:  
\begin{equation}\label{E:curvdensmm}
h_{q}( (1-s)  t_{0}  + s t_{1} )^{1/(N-1)}  
 \geq \sigma^{(1-s)}_{K,N-1}(t_{1} - t_{0}) h_{q} (t_{0})^{1/(N-1)} + \sigma^{(s)}_{K,N-1}(t_{1} - t_{0}) h_{q} (t_{1})^{1/(N-1)},
\end{equation}
for all $s\in [0,1]$ and for all $t_{0}, t_{1} \in \dom(g(q,\cdot))$ with  $t_{0} < t_{1}$. If $N =1$, for $\qq$-a.e. $q \in Q$ the density $h_{q}$ is constant.
\end{theorem}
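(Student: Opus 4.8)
The plan is to transfer the $\CD_{loc}(K,N)$ bound of $(X,\sfd,\mm)$ onto the one--dimensional densities $h_q$ by testing it against competitor measures supported on the rays of the disintegration, and then reading the resulting displacement--convexity inequality \eqref{E:CD} through \eqref{E:definitionh}. First I would dispose of the preliminary reductions. Since $\CD_{loc}(K,N)$ implies $\MCP(K,N)$ together with essential non-branching and uniqueness of optimal maps, Theorem \ref{T:densityestimates} and \eqref{E:regularityh} are available, so for $\qq$-a.e.\ $q$ the density $h_q$ is strictly positive and locally Lipschitz on the interior of the interval $I_q:=\dom(g(q,\cdot))$. Moreover, by Remark \ref{R:CDN-1}, inequality \eqref{E:curvdensmm} for all $t_0<t_1$ in $I_q$ is equivalent to the \emph{local} condition $\big(h_q^{1/(N-1)}\big)''+\tfrac{K}{N-1}h_q^{1/(N-1)}\le 0$ in the distributional sense; hence it suffices to prove \eqref{E:curvdensmm} for all $t_0<t_1$ in $I_q$ with $t_1-t_0$ as small as we like, and then patch along each ray. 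Equivalently, I want to show that for $\qq$-a.e.\ $q$ the one--dimensional m.m.s.\ $(\R,|\cdot|,h_q\,\L^1)$ verifies $\CD_{loc}(K,N)$, which by Remark \ref{R:CDN-1} is \eqref{E:curvdensmm} (and for $N=1$ is exactly the statement that $h_q$ is constant).

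For the core step, cover $X$ by countably many open sets on each of which $\mm$ satisfies the \emph{global} $\CD(K,N)$ condition of Definition \ref{D:CD}; fix one such neighborhood $U$, a compact $\tilde Q\subset Q$ with $\qq(\tilde Q)>0$, and rationals $t_0<t_1$, $\delta>0$ such that $g(q,s)\in U$ for every $q\in\tilde Q$ and $s\in[t_0,t_1+\delta]$. By inner regularity of $\qq$ and a Lusin-type selection argument such data exist, and letting $U,\tilde Q,t_0,t_1,\delta$ run over a countable family they exhaust $\qq$-a.e.\ $q$ and, for each $q$, $\L^1$-a.e.\ point of $I_q$. Put $\ell:=t_1-t_0$, $A_0:=g(\tilde Q\times[t_0,t_0+\delta])$, and let $S\colon A_0\to X$ be the translation by $\ell$ along the rays, $S(g(q,s)):=g(q,s+\ell)$, whose image is $A_1:=g(\tilde Q\times[t_1,t_1+\delta])\subset U$. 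Set $\mu_0:=\mm(A_0)^{-1}\mm\llcorner_{A_0}$ and $\mu_1:=S_\sharp\mu_0$; both lie in $\mathcal P_2(X,\sfd,\mm)$ (for $\mu_1$ one uses $h_q>0$) and are supported in $U$. Since $\f$ is affine with slope $-1$ along every ray, for any two points $(x_i,S(x_i))$ of $\gr(S)$ one has $\f(S(x_1))-\f(S(x_0))=\f(x_1)-\f(x_0)$, so the product of these differences is a square; by Lemma \ref{L:12monotone} (and $\gr(S)\subset\Gamma$) the plan $(Id,S)_\sharp\mu_0$ is $\sfd^2$-cyclically monotone, hence $W_2$-optimal, and by essential non-branching with uniqueness of optimal maps it is \emph{the} optimal plan, lifted by the sub-geodesics $t\mapsto g(q,s+t\ell)$ to the unique $\nu\in\Opt(\mu_0,\mu_1)$. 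Therefore this $\nu$ is precisely the one provided by $\CD(K,N)$ on $U$, so \eqref{E:CD} holds along $\nu$-a.e.\ of these geodesics.

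It then remains to read off the density inequality. By translating \eqref{E:definitionh}, the midpoint measure $\mu_t=(e_t)_\sharp\nu$ has $\mm$-density $\rho_t\big(g(q,s)\big)=\mm(A_0)^{-1}h_q(s-t\ell)/h_q(s)$ on $g(\tilde Q\times[t_0+t\ell,t_0+t\ell+\delta])$; evaluating at $\gamma_t=g(q,\tau_0+t\ell)$, where $\sfd(\gamma_0,\gamma_1)=\ell$ is constant, \eqref{E:CD} becomes
\[
h_q(\tau_0+t\ell)^{1/N}\ \ge\ \tau_{K,N}^{(1-t)}(\ell)\,h_q(\tau_0)^{1/N}+\tau_{K,N}^{(t)}(\ell)\,h_q(\tau_0+\ell)^{1/N},
\]
valid for $\qq$-a.e.\ $q\in\tilde Q$, $\L^1$-a.e.\ $\tau_0\in[t_0,t_0+\delta]$ and — intersecting over rational $t$ and using continuity of $h_q$ — all $t\in[0,1]$. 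Ranging over the countable family and upgrading "almost every" to "every" triple by continuity of $h_q$, this is exactly the assertion that $(\R,|\cdot|,h_q\,\L^1)$ verifies $\CD_{loc}(K,N)$; hence it verifies $\CD(K,N)$, and by Remark \ref{R:CDN-1} inequality \eqref{E:curvdensmm} holds for every $t_0<t_1$ in $I_q$ (for $N=1$ the same computation with $\tau_{K,1}^{(\cdot)}$ forces $h_q$ to be constant, as in Remark \ref{R:CDN-1}).

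The main obstacle is the middle step: guaranteeing that the tested $W_2$-optimal transport really is supported on the rays of the disintegration, rather than mixing distinct rays — this is where $\sfd^2$-cyclical monotonicity of $\Gamma$ via Lemma \ref{L:12monotone} must be combined with essential non-branching and uniqueness of optimal maps, and where the construction of $\mu_1:=S_\sharp\mu_0$ (as opposed to a normalized restriction of $\mm$) is essential. The surrounding reductions — the countable covering into $\CD_{loc}$-neighborhoods, the measurable selection of $\tilde Q$ and of the translation maps, and the passage from "almost every" to "every" $(t_0,t_1,s)$ — are routine but have to be set up with care.
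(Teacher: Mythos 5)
There is a genuine gap in the core step. Testing the $\CD_{loc}(K,N)$ inequality \eqref{E:CD} with $\mu_1=S_\sharp\mu_0$, where $S$ is the translation by $\ell$ along the rays, uses intervals of \emph{equal} length $\delta$ on both sides, and the computation you carry out yields
\begin{equation*}
h_q(\tau_0+t\ell)^{1/N}\ \ge\ \tau^{(1-t)}_{K,N}(\ell)\, h_q(\tau_0)^{1/N}+\tau^{(t)}_{K,N}(\ell)\, h_q(\tau_0+\ell)^{1/N}.
\end{equation*}
This is not equivalent to \eqref{E:curvdensmm}: the theorem asserts $\sigma^{(s)}_{K,N-1}$-concavity of $h_q^{1/(N-1)}$, whereas you obtain $\tau^{(t)}_{K,N}$-concavity of $h_q^{1/N}$, which is strictly weaker. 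Since $\tau^{(t)}_{K,N}(\theta)=t^{1/N}\sigma^{(t)}_{K,N-1}(\theta)^{(N-1)/N}$, H\"older's inequality shows that \eqref{E:curvdensmm} implies your inequality but not conversely: for $K=0$ your inequality says $h_q^{1/N}$ is concave while \eqref{E:curvdensmm} says $h_q^{1/(N-1)}$ is concave, and $h_q(t)=t^N$ satisfies the former but not the latter. Consequently the claim that your displayed inequality ``is exactly the assertion that $(\R,|\cdot|,h_q\,\L^1)$ verifies $\CD_{loc}(K,N)$'' is incorrect: by Remark \ref{R:CDN-1} that condition is \eqref{E:curvdensmmR}, and the translation test captures only a special, insufficient family of marginals in the one-dimensional space.

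The missing idea --- the heart of the paper's proof --- is to test with uniform measures on segments $[A_0,A_0+L_0]$ and $[A_1,A_1+L_1]$ of \emph{different} lengths along each ray. Running the same computation and specializing to $s=1/2$ (then passing to the endpoint $\tau=0$ by continuity of $h_q$) gives
\begin{equation*}
(L_0+L_1)^{1/N}\, h_q(A_{1/2})^{1/N}\ \ge\ \sigma^{(1/2)}_{K,N-1}(A_1-A_0)^{(N-1)/N}\left\{ L_0^{1/N}h_q(A_0)^{1/N}+L_1^{1/N}h_q(A_1)^{1/N}\right\},
\end{equation*}
and the extra degree of freedom is then spent by optimizing over $L_0,L_1$, choosing $L_i$ proportional to $h_q(A_i)^{1/(N-1)}$; this upgrades the exponent from $1/N$ to $1/(N-1)$ and the coefficient from $\tau_{K,N}$ to $\sigma_{K,N-1}$, producing the midpoint inequality \eqref{E:CDKN-1}, after which the local-to-global property of $\CD^*(K,N-1)$ yields \eqref{E:curvdensmm} for all $t_0<t_1$. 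Your preliminary reductions, the use of Lemma \ref{L:12monotone} to certify $W_2$-optimality of the ray-wise coupling, and the reading of \eqref{E:CD} through \eqref{E:definitionh} all match the paper's scheme; it is precisely the equal-length choice of marginals that forfeits the optimization and leaves the argument short of the claimed inequality.
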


\begin{proof}
We first consider the case $N>1$.

{\bf Step 1.} \\
Thanks to Proposition \ref{P:Qlevelset}, without any loss of generality we can assume that the quotient set $Q$ (identified with the set $\{g(q,0) : q \in Q\}$) 
is locally a subset of a level set of the map $\f$ inducing the transport set, i.e.
there exists a countable partition $\{ Q_{i}\}_{i\in \N}$ with $Q_{i} \subset Q$ Borel set such that
$$
\{ g(q,0) : q \in Q_{i} \} \subset \{ x \in X : \f(x) = \alpha_{i} \}.
$$
It is clearly sufficient to prove  \eqref{E:curvdensmm} on each $Q_{i}$;  so  fix $\bar i \in \N$ and for ease of notation assume $\alpha_{\bar i} = 0$ and $Q = Q_{\bar i}$.
As $\dom(g(q,\cdot))$ is a convex subset of $\R$, we can also restrict to a uniform subinterval 
$$
 (a_0,a_1) \subset \dom(g(q,\cdot)), \qquad \forall \ q \ \in Q_{i},
$$
for some $a_0,a_1 \in \R$. Again without any loss of generality we also assume $a_0 < 0 < a_1$. 

\bigskip

Consider any $a_{0} <A_{0} < A_{1} < a_{1}$ and $L_{0}, L_{1} >0$ such that $A_{0} + L_{0} < A_{1}$ and $A_{1} + L_{1} < a_{1}$.
Then define the following two probability measures
$$
\mu_{0} : = \int_{Q} g(q,\cdot)_\sharp \left(  \frac{1}{L_{0}} \mathcal{L}^{1}\llcorner_{ [A_{0},A_{0}+L_{0}] } \right) \, \qq(dq), \qquad 
\mu_{1} : = \int_{Q} g(q,\cdot)_\sharp \left( \frac{1}{L_{1}} \mathcal{L}^{1}\llcorner_{ [A_{1},A_{1}+L_{1}] } \right) \, \qq(dq).
$$
Since $g(q,\cdot)$ is an isometry one can also represent $\mu_{0}$ and $\mu_{1}$ in the following way: 
$$
\mu_{i} : = \int_{Q} \frac{1}{L_{i}}  \mathcal{H}^{1}\llcorner_{ \left\{g(q,t) \colon t \in [A_{i},A_{i}+L_{i}] \right\} } \, \qq(dq)
$$
for $i =0,1$. Both $\mu_{i}$ are absolutely continuous with respect to $\mm$ and $\mu_{i} = \r_{i} \mm$
with 
$$
\r_{i} (g(q,t)) = \frac{1}{L_{i}} h_{q}(t)^{-1}, \qquad \forall \, t \in [A_{i},A_{i}+L_{i}]. 
$$
Moreover from Lemma \ref{L:12monotone} it follows that the curve $[0,1] \ni s \mapsto \mu_{s} \in \mathcal{P}(X)$  defined by 
$$
\mu_{s} : = \int_{Q} \frac{1}{L_{s}}  \mathcal{H}^{1}\llcorner_{ \left\{g(q,t) \colon t \in [A_{s},A_{s}+L_{s}] \right\} } \, \qq(dq)
$$
where 
$$
L_{s} : = (1 - s)L_{0} + sL_{1}, \qquad A_{s} : = (1-s ) A_{0} + s A_{1}
$$
is the unique $L^{2}$-Wasserstein geodesic connecting $\mu_{0}$ to $\mu_{1}$. Again one has $\mu_{s} = \r_{s} \mm$ and can also write its density in the following way:
$$
\r_{s} (g(q,t)) = \frac{1}{L_{s}} h_{q}(t)^{-1}, \qquad \forall \, t \in [A_{s},A_{s}+L_{s}]. 
$$

{\bf Step 2.}\\
By $\CD_{loc}(K,N)$ and the essentially non-branching property one has: for $\qq$-a.e. $q \in Q_{i}$
$$
(L_{s})^{\frac{1}{N}} h_{q}( (1-s) t_{0} + s t_{1} )^{\frac{1}{N}}
	\geq 		\tau_{K,N}^{(1-s)}(t_{1}-t_{0}) (L_{0})^{\frac{1}{N}} h_{q}( t_{0} )^{\frac{1}{N}}+  \tau_{K,N}^{(s)}(t_{1}-t_{0}) (L_{1})^{\frac{1}{N}} h_{q}( t_{1} )^{\frac{1}{N}}, 
$$
for $\L^{1}$-a.e. $t_{0} \in [A_{0},A_{0} + L_{0}]$ and $t_{1}$ obtained as the image of $t_{0}$ through the monotone rearrangement of $[A_{0},A_{0}+L_{0}]$ to 
$[A_{1},A_{1}+L_{1}]$ and every $s \in [0,1]$. If $t_{0} = A_{0} + \tau L_{0}$, then $t_{1} = A_{1} + \tau L_{1}$. Also $A_{0}$ and $A_{1} +L_{1}$ should be taken close enough to 
verify the local curvature condition. 

Then we can consider the previous inequality only for $s = 1/2$ and include the explicit formula for $t_{1}$ and obtain: 
\begin{align*}
(L_{0} + L_{1})^{\frac{1}{N}} &h_{q}(A_{1/2} + \tau L_{1/2})^{\frac{1}{N}} \\
			&	\geq  
		\sigma^{(1/2)}_{K,N-1}( A_{1} - A_{0} + \tau |L_{1} - L_{0}| )^{\frac{N-1}{N}} \left\{ (L_{0})^{\frac{1}{N}} h_{q}(A_{0} + \tau L_{0})^{\frac{1}{N}} 
			+ (L_{1})^{\frac{1}{N}} h_{q}(A_{1} + \tau L_{1})^{\frac{1}{N}} \right\},
\end{align*}
for $\L^{1}$-a.e. $\tau \in [0,1]$, where we used the notation $A_{1/2}:=\frac{A_0+A_1}{2}, L_{1/2}:=\frac{L_0+L_1}{2}$. 
Now observing that the map $s \mapsto h_{q}(s)$ is continuous, the previous inequality also holds for $\tau =0$:
\begin{equation}\label{E:beforeoptimize}
(L_{0} + L_{1})^{\frac{1}{N}} h_{q}(A_{1/2} )^{\frac{1}{N}}
		\geq 
		\sigma^{(1/2)}_{K,N-1}( A_{1} - A_{0})^{\frac{N-1}{N}} 
				\left\{ (L_{0})^{\frac{1}{N}} h_{q}(A_{0})^{\frac{1}{N}} + (L_{1})^{\frac{1}{N}} h_{q}(A_{1})^{\frac{1}{N}} \right\},
\end{equation}
for all $A_{0} < A_{1}$  with $A_{0},A_{1}\in (a_0, a_1)$, all sufficiently small $L_{0}, L_{1}$ and $\qq$-a.e. $q\in Q$, 
with exceptional set depending on $A_{0},A_{1},L_{0}$ and $L_{1}$. 

Noticing that \eqref{E:beforeoptimize} depends in a continuous way on $A_{0},A_{1},L_{0}$ and $L_{1}$, it follows that there 
exists a common exceptional set $N \subset Q$ such that $\qq(N) = 0$ and for each $q \in Q\setminus N$ for all  
$A_{0},A_{1},L_{0}$ and $L_{1}$ the inequality \eqref{E:beforeoptimize} holds true.
Then one can make the following (optimal) choice 
$$
L_{0} : = L \frac{h_{q}(A_{0})^{\frac{1}{N-1}}  }{h_{q}(A_{0})^{\frac{1}{N-1}} + h_{q}(A_{1})^{\frac{1}{N-1}} }, \qquad 
L_{1} : = L \frac{h_{q}(A_{1})^{\frac{1}{N-1}}  }{h_{q}(A_{0})^{\frac{1}{N-1}} + h_{q}(A_{1})^{\frac{1}{N-1}} },
$$
for any $L > 0$ sufficiently small, and obtain that 
\begin{equation}\label{E:CDKN-1}
h_{q}(A_{1/2} )^{\frac{1}{N-1}}
		\geq 
		\sigma^{(1/2)}_{K,N-1}( A_{1} - A_{0}) 
				\left\{  h_{q}(A_{0})^{\frac{1}{N-1}} + h_{q}(A_{1})^{\frac{1}{N-1}} \right\}.
\end{equation}
Now one can observe that \eqref{E:CDKN-1} is precisely the inequality requested for $\CD^{*}_{loc}(K,N-1)$ to hold. 
As stated in Section \ref{Ss:geom}, the reduced curvature-dimension condition verifies the local-to-global property. 
In particular, see \cite[Lemma 5.1, Theorem 5.2]{cavasturm:MCP}, if a function verifies \eqref{E:CDKN-1} locally, 
then it also satisfies it globally. 
Hence $h_{q}$ also verifies the inequality requested for $\CD^{*}(K,N-1)$ to hold, i.e. for $\qq$-a.e. $q \in Q$, the density $h_{q}$ verifies \eqref{E:curvdensmm}.
\\

{\bf Step 3.}\\ 
For the case $N =1$, repeat the same construction of {\bf Step 1.} and obtain for $\qq$-a.e. $q \in Q$
$$
(L_{s}) h_{q}( (1-s) t_{0} + s t_{1} )	\geq 		(1-s) L_{0} h_{q}( t_{0} )+  s L_{1} h_{q}( t_{1} ),
$$
for any $s \in [0,1]$ and $L_{0}$ and $L_{1}$ sufficiently small. As before, we deduce for $s = 1/2$ that
$$
\frac{L_{0} + L_{1}}{2} h_{q}( A_{1/2} ) \geq  \frac{1}{2}   \left(L_{0} h_{q}( A_{0} )+  L_{1}h_{q}( A_{1} ) \right).
$$
Now taking $L_{0} = 0$ or $L_{1} = 0$, it follows that necessarily $h_{q}$ has to be constant. 
\end{proof}

Accordingly to Remark \ref{R:CDN-1},  Theorem \ref{T:CDKN-1} can be alternatively stated as follows. \\

\noindent
\emph{
If $(X,\sfd,\mm)$ is an essentially non-branching m.m.s. verifying $\CD_{loc}(K,N)$
and $\f : X \to \R$ is a 1-Lipschitz function, then the corresponding decomposition of the space 
in maximal rays $\{ X_{q}\}_{q\in Q}$ produces a disintegration $\{\mm_{q} \}_{q\in Q}$ of $\mm$ so that  for $\qq$-a.e. $q\in Q$, 
$$
\textrm{the m.m.s. }(  \dom(g(q,\cdot)), |\cdot|, h_{q} \mathcal{L}^{1}) \quad \textrm{verifies} \quad \CD(K,N).
$$
}
Accordingly,  one says that the disintegration $q \mapsto \mm_{q}$ is a $\CD(K,N)$ disintegration.

\bigskip

The disintegration obtained with $L^{1}$-Optimal Transportation is also balanced in the sense of Section \ref{Ss:balanced}.
This additional information together with what proved so far is collected in the next

\begin{theorem}[Theorem 5.1 of \cite{CM1}]\label{T:localize}
Let $(X,\sfd, \mm)$ be an essentially non-branching metric measure space verifying the $\CD_{loc}(K,N)$ condition for some $K\in \R$ and $N\in [1,\infty)$. 
Let $f : X \to \R$ be $\mm$-integrable such that $\int_{X} f\, \mm = 0$ and assume the existence 
of $x_{0} \in X$ such that $\int_{X} | f(x) |\,  \sfd(x,x_{0})\, \mm(dx)< \infty$. 
\medskip

Then the space $X$ can be written as the disjoint union of two sets $Z$ and $\mathcal{T}$ with $\mathcal{T}$ admitting a partition 
$\{ X_{q} \}_{q \in Q}$ and a corresponding disintegration of $\mm\llcorner_{\mathcal{T}}$, $\{\mm_{q} \}_{q \in Q}$ such that: 

\begin{itemize}
\item For any $\mm$-measurable set $B \subset \mathcal{T}$ it holds 
$$
\mm(B) = \int_{Q} \mm_{q}(B) \, \qq(dq), 
$$
where $\qq$ is a probability measure over $Q$ defined on the quotient $\sigma$-algebra $\mathcal{Q}$. 
\medskip
\item For $\qq$-almost every $q \in Q$, the set $X_{q}$ is a geodesic and $\mm_{q}$ is supported on it. 
Moreover $q \mapsto \mm_{q}$ is a $\CD(K,N)$ disintegration.
\medskip
\item For $\qq$-almost every $q \in Q$, it holds $\int_{X_{q}} f \, \mm_{q} = 0$ and $f = 0$ $\mm$-a.e. in $Z$.
\end{itemize}
\end{theorem}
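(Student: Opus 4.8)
The plan is to obtain Theorem~\ref{T:localize} as the synthesis of the structural results of Sections~\ref{S:transportset}--\ref{S:ConditionalMeasures}, applied to the $L^{1}$-optimal transportation problem naturally attached to $f$. First I would dispose of the degenerate case: if $m:=\int_{X}f_{+}\,\mm=\int_{X}f_{-}\,\mm$ vanishes, then $f=0$ $\mm$-a.e. and one takes $Z:=X$, $\T:=\emptyset$. Otherwise, since replacing $f$ by $m^{-1}f$ affects none of the conclusions (each of them is invariant under multiplying $f$ by a positive constant), I may assume $m=1$ and set $\mu_{0}:=f_{+}\,\mm$, $\mu_{1}:=f_{-}\,\mm$: these are probability measures, absolutely continuous with respect to $\mm$, mutually singular (concentrated on the disjoint sets $\{f>0\}$ and $\{f<0\}$), and the hypothesis $\int_{X}|f(x)|\,\sfd(x,x_{0})\,\mm(dx)<\infty$ guarantees both have finite first moment. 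Hence $L^{1}$-Kantorovich duality provides a $1$-Lipschitz function $\f:X\to\R$ such that every $\pi\in\Pi(\mu_{0},\mu_{1})$ of minimal $L^{1}$-cost satisfies $\pi(\Gamma)=1$, with $\Gamma=\{(x,y):\f(x)-\f(y)=\sfd(x,y)\}$. Fix one such $\pi$, and build from $\f$ the objects of Section~\ref{S:transportset}: $R=\Gamma\cup\Gamma^{-1}$, the transport set with endpoints $\T_{e}$, and the branching sets $A_{\pm}$.

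Next I would invoke the curvature hypothesis. As $(X,\sfd,\mm)$ is essentially non-branching and verifies $\CD_{loc}(K,N)$, by \cite{cavasturm:MCP} it satisfies $\MCP(K,N)$ and, still by \cite{cavasturm:MCP} (cf. also \cite{GigliMap,GRS2013}), for every $\mu_{0}\ll\mm$ the element of $\Opt(\mu_{0},\mu_{1})$ is unique and induced by a map; in particular the hypothesis of Theorem~\ref{T:RCD} holds, so $R$ is an equivalence relation on the transport set and $\mm(\T_{e}\setminus\T)=0$. Let $\T$ denote the $\sigma$-compact, $R$-saturated, full-measure subset of $\T_{e}$ furnished by Remark~\ref{R:regulardisint}, and put $Z:=X\setminus\T$, so that $X=Z\sqcup\T$. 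To see that $f=0$ $\mm$-a.e. on $Z$, note that by Lemma~\ref{L:mapoutside} one has $\pi(\T_{e}\times\T_{e}\cup\{x=y\})=1$, while the restriction of $\pi$ to the diagonal $\{x=y\}$ pushes forward, under either coordinate projection (which coincide there), to a single measure dominated by both $\mu_{0}$ and $\mu_{1}$; these being mutually singular, that measure is $0$, whence $\pi(\T_{e}\times\T_{e})=1$ and $\mu_{0}(X\setminus\T_{e})=\mu_{1}(X\setminus\T_{e})=0$, i.e. $f_{\pm}=0$ $\mm$-a.e. on $X\setminus\T_{e}$. Combined with $\mm(\T_{e}\setminus\T)=0$ this gives $f=0$ $\mm$-a.e. on $Z$.

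For the disintegration, Proposition~\ref{P:Qlevelset} and Remark~\ref{R:regulardisint} yield a partition $\{X_{q}\}_{q\in Q}$ of $\T$ with $\sigma$-compact quotient $Q$ and Borel quotient map $\QQ$, so that the Disintegration Theorem~\ref{T:disintegrationgeneral} produces the strongly consistent formula $\mm\llcorner_{\T}=\int_{Q}\mm_{q}\,\qq(dq)$ with $\mm_{q}(X_{q})=1$ --- the first bullet. By Lemma~\ref{L:singlegeo} each $X_{q}$ is the image of a geodesic and $\mm_{q}$ is supported on it. That $q\mapsto\mm_{q}$ is a $\CD(K,N)$ disintegration is exactly Theorem~\ref{T:CDKN-1} (applicable since essential non-branching and $\CD_{loc}(K,N)$ hold), giving $\mm_{q}=g(q,\cdot)_{\sharp}(h_{q}\L^{1})$ with $h_{q}$ satisfying \eqref{E:curvdensmm} when $N>1$ and constant when $N=1$; this is the second bullet. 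Finally, since $\mu_{0},\mu_{1}\ll\mm$ --- so that $\T_{e}\setminus\T$ is $\mu_{i}$-negligible --- Lemma~\ref{L:balancing} applies verbatim and yields $\int_{X_{q}}f\,\mm_{q}=0$ for $\qq$-a.e. $q\in Q$, which together with the previous paragraph completes the third bullet.

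The argument is thus essentially bookkeeping: the substance lies in the results of the preceding sections, and the only points that genuinely require care are (i) checking that the curvature hypothesis is strong enough to trigger Theorem~\ref{T:RCD}, namely uniqueness of optimal maps under essential non-branching together with $\CD_{loc}(K,N)$ --- a non-trivial input imported from \cite{cavasturm:MCP,GigliMap,GRS2013} --- and (ii) the identification of $Z$ with the non-transport part of $X$, together with the mutual-singularity argument showing that $f$ vanishes there. I expect (i) to be the step most easily underestimated, as $\CD_{loc}$ is only a local condition and its sufficiency here ultimately rests on the local-to-global property of $\CD^{*}$ that is used inside Theorem~\ref{T:CDKN-1}.
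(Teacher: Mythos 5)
Your proposal is correct and follows essentially the same route as the paper: reduce to the $L^{1}$-transport problem between $f_{+}\mm$ and $f_{-}\mm$, then assemble Theorem \ref{T:RCD}, the disintegration of Proposition \ref{P:Qlevelset}, Theorem \ref{T:CDKN-1} and Lemma \ref{L:balancing}. Your argument that $f=0$ $\mm$-a.e.\ on $Z$ (killing the diagonal part of $\pi$ via mutual singularity of $\mu_{0}$ and $\mu_{1}$) is a slightly more careful variant of the paper's contradiction argument using $\mu_{0}=\int_{Q}\mu_{0\,q}\,\qq(dq)$, but the substance is identical.
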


The proof is just a collection of already proven statements. We include it for readers convenience.

\begin{proof} 
Consider 
$$
\mu_{0} : = f_{+} \mm \frac{1}{\int f_{+}\mm}, \qquad \mu_{1} : = f_{-}\mm \frac{1}{\int f_{-}\mm},
$$ 
where $f_{\pm}$ stands for the positive and negative part of $f$, respectively. 
From the summability assumption on $f$ it follows the existence of $\f : X \to \R$, $1$-Lipschitz Kantorovich potential for the couple of marginal probability $\mu_{0}, \mu_{1}$. 
Since the m.m.s. $(X,\sfd,\mm)$ is essentially non-branching, the transport  set $\T$ is partitioned by the rays: 
$$
\mm_{\T} = \int_{Q} \mm_{q}\, \qq(dq),\qquad \mm_{q}(X_{q}) = 1, \quad \qq-\textrm{a.e. } q \in Q;
$$
moreover $(X,\sfd,\mm)$ verifies $\CD_{loc}$ and therefore Theorem \ref{T:CDKN-1} implies that $q \mapsto \mm_{q}$ is a $\CD(K,N)$ disintegration.
Lemma \ref{L:balancing} implies that 
$$
\int_{X_{q}} f(x) \, \mm_{q}(dx) = 0.
$$
To conclude moreover note that in $X \setminus \T$ necessarily $f$ has to be zero.
Take indeed any $B \subset X\setminus \T$ compact with $\mm(B) > 0$ and assume $f \neq 0$ over $B$. Then possibly taking a subset, we can assume $f > 0$ over $B$ 
and therefore $\mu_{0}(B) > 0$. Since 
$$
\mu_{0} = \int_{Q} \mu_{0\,q} \qq(dq), \qquad \mu_{0\,q} (X_{q}) = 1,  
$$
necessarily $B$ cannot be a subset of $X\setminus \T$ yielding a contradiction. All the claims are proved.
\end{proof}

\bigskip

 
\section{Applications}\label{S:application}

Here we will collect some applications of the results proved so far, in particular of Proposition \ref{P:nonatoms} and Theorem \ref{T:CDKN-1}

\subsection{Solution of the Monge problem}
 
Here we review how regularity of conditional probabilities of the one-dimensional disintegration studied so far permits to construct a solution to the Monge 
problem. In particular we will see how Proposition \ref{P:nonatoms} allows to construct an optimal map $T$. 
As the plan is to use the one-dimensional reduction, first we recall the one dimensional result for the Monge problem \cite{Vil}.

\begin{theorem}
\label{T:oneDmonge}
Let $\mu_{0}, \mu_{1}$ be probability measures on $\erre$, $\mu_{0}$ with no atoms, and let
$$
H(s) := \mu_{0}((-\infty,s)), \quad F(t) := \mu_{1}((-\infty,t)),
$$
be the left-continuous distribution functions of $\mu_{0}$ and $\mu_{1}$ respectively. Then the following holds.
\begin{enumerate}
\item The non decreasing function $T : \erre \to \erre \cup [-\infty,+\infty)$ defined by
$$
T(s) := \sup \big\{ t \in \erre : F(t) \leq H(s) \big\}
$$
maps $\mu_{0}$ to $\mu_{1}$. Moreover any other non decreasing map $T'$ such that $T'_\sharp \mu_{0} = \mu_{1}$ coincides 
with $T$ on the support of $\mu_{0}$ up to a countable set.
\item If $\phi : [0,+\infty] \to \erre$ is non decreasing and convex, then $T$ is an optimal transport relative to the cost $c(s,t) = \phi(|s-t|)$. 
Moreover $T$ is the unique optimal transference map if $\phi$ is strictly convex.
\end{enumerate}
\end{theorem}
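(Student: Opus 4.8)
The plan is to handle the two parts separately: part (1) is essentially bookkeeping with distribution functions, while part (2) is the classical monotone rearrangement argument, whose core is a submodularity (``Monge'') inequality for the cost.

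\textbf{Part (1), that $T_\sharp\mu_0=\mu_1$.} Since $\mu_0$ has no atoms, $H$ is continuous, nondecreasing, with $H(-\infty)=0$ and $\sup_s H(s)=1$; hence for every $c\in[0,1]$ the set $\{s:H(s)<c\}$ is a half-line $(-\infty,s_c)$ and $\mu_0(\{H<c\})=\mu_0((-\infty,s_c))=H(s_c)=c$ by continuity of $H$. It therefore suffices to compute, for each $a\in\R$, the level sets of $T$. Because $F$ is nondecreasing and left-continuous, the set $\{t:F(t)\le H(s)\}$ is a half-line, and directly from $T(s)=\sup\{t:F(t)\le H(s)\}$ one gets
$$
T(s)<a \iff \exists\, t<a \text{ with } F(t)>H(s) \iff \sup_{t<a}F(t)>H(s) \iff F(a)>H(s),
$$
the last step using $F(a)=\mu_1((-\infty,a))=\lim_{t\uparrow a}F(t)$. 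Thus $\{s:T(s)<a\}=\{s:H(s)<F(a)\}$, which has $\mu_0$-measure $F(a)=\mu_1((-\infty,a))$. Since the half-lines $(-\infty,a)$ generate $\mathcal B(\R)$, this gives $T_\sharp\mu_0=\mu_1$, and $T$ is nondecreasing by construction.

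\textbf{Part (1), uniqueness up to a countable set.} If $T'$ is nondecreasing with $T'_\sharp\mu_0=\mu_1$, then $T'$ has at most countably many jump points, and away from them it is determined by its one-sided limits; comparing the distribution function of $T'_\sharp\mu_0$ with $F$ forces $T'(s)=T(s)$ at every point of $\mathrm{supp}(\mu_0)$ that is a continuity point of both maps. The residual set, consisting of the (countably many) jumps of $T$ and $T'$ together with the two ``endpoints'' of $\mathrm{supp}(\mu_0)$, is countable, which is exactly the claim. I expect this step to be routine bookkeeping.

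\textbf{Part (2), optimality.} The key lemma is the Monge (submodularity) inequality for $c(s,t)=\phi(|s-t|)$ with $\phi:[0,\infty]\to\R$ convex and nondecreasing: for $s_1<s_2$ and $t_1\le t_2$,
$$
c(s_1,t_1)+c(s_2,t_2)\ \le\ c(s_1,t_2)+c(s_2,t_1),
$$
with strict inequality when moreover $t_1<t_2$ and $\phi$ is strictly convex. This is the heart of the matter and the step I expect to be the main obstacle: I would prove it by distinguishing the finitely many cases according to the relative order of $s_1,s_2,t_1,t_2$ and reducing each to the elementary facts that a nondecreasing $\phi$ satisfies $\phi(a)+\phi(b)\le\phi(a+h)+\phi(b+h)$ and a convex $\phi$ has nondecreasing increments, $\phi(a+h)-\phi(a)\le\phi(b+h)-\phi(b)$ for $a\le b$, $h\ge0$ (strict for strictly convex $\phi$ when $a<b$, $h>0$); throughout one restricts to pairs where $c$ is finite, i.e. to the transport set. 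Granting the lemma, one runs the standard argument: any optimal transference plan for $c$ is concentrated on a $c$-cyclically monotone set $\Gamma'$ (by lower semicontinuity and local finiteness of $c$), and the Monge inequality forces $\Gamma'$ to be monotone nondecreasing — if $\Gamma'$ contained $(s_1,t_1),(s_2,t_2)$ with $s_1<s_2$ and $t_1>t_2$, then two-point cyclical monotonicity and the Monge inequality applied to the reordered targets would both hold, contradicting the strict inequality (and forcing equality in the non-strict case, so that the configuration can be ``uncrossed'' without changing the cost). Since $\mu_0$ is atomless, a monotone nondecreasing set with first marginal $\mu_0$ is contained, up to a $\mu_0$-null set, in the graph of a nondecreasing map, which by the uniqueness in part (1) agrees $\mu_0$-a.e. with $T$; hence $(\mathrm{id},T)_\sharp\mu_0$ is optimal. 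When $\phi$ is strictly convex the uncrossing step is forbidden outright, so every optimal plan is monotone and therefore equals $(\mathrm{id},T)_\sharp\mu_0$, giving uniqueness of the optimal transference map.
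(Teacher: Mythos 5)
Your argument is essentially correct, but there is nothing in the paper to compare it against: Theorem \ref{T:oneDmonge} is stated as a recalled classical fact and the paper gives no proof at all, only the citation to \cite{Vil}. What you have written is the standard textbook proof of the monotone rearrangement theorem, and the computations you do carry out are right: the identity $\{s: T(s)<a\}=\{s: H(s)<F(a)\}$ via left-continuity of $F$, the fact that $\mu_0(\{H<c\})=c$ for atomless $\mu_0$, and the submodularity inequality $\phi(|s_1-t_1|)+\phi(|s_2-t_2|)\le \phi(|s_1-t_2|)+\phi(|s_2-t_1|)$ for $s_1<s_2$, $t_1\le t_2$ (which is most cleanly seen by noting that $u\mapsto\phi(|u|)$ is convex, that the two pairs of arguments have equal sums, and that $\{t_1-s_1,\,t_2-s_2\}$ lies between $t_1-s_2$ and $t_2-s_1$). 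The one step that is genuinely more than bookkeeping is the passage, for $\phi$ convex but \emph{not} strictly convex, from the two-point ``uncrossing at equal cost'' observation to the conclusion that the monotone plan $(\mathrm{id},T)_\sharp\mu_0$ is optimal: a cyclically monotone support may then contain crossed configurations, and uncrossing an entire plan is a global rearrangement, not a finite operation. The standard ways to close this are (a) the Cambanis--Simons--Stout/Hoeffding inequality $\int c\,d\pi\ge\int_0^1 c(H^{-1}(u),F^{-1}(u))\,du$, (b) discretization of the marginals, or (c) perturbing to $\phi_\ve(r)=\phi(r)+\ve\sqrt{1+r^2}$, for which your strictly convex argument gives that the \emph{same} map $T$ (which does not depend on $\phi$) is the unique optimum, and then letting $\ve\downarrow 0$. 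With any of these in place your proof is complete and matches the argument the paper implicitly defers to.
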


\begin{theorem}[Theorem 6.2 of \cite{biacava:streconv}]\label{T:mongeff}
Let $(X,\sfd,\mm)$ be a  non-branching metric measure space and consider $\mu_{0}, \mu_{1} \in \mathcal{P}(X)$ with finite first moment. 
Assume the existence of a Kantorovich potential $\f$ such that the associated transport set $\T$ verifies \emph{\ref{A:1}}.
Assume $\mu_{0} \ll \mm$. 

Then there exists a Borel map $T: X \to X$ such that 
$$
\int_{X} \sfd(x,T(X)) \, \mu_{0} (dx) = \min_{\pi \in \Pi(\mu_{0},\mu_{1})} \int_{X\times X} \sfd(x,y) \, \pi(dxdy).
$$
\end{theorem}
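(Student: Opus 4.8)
The plan is to combine the one-dimensional theory of the Monge problem (Theorem \ref{T:oneDmonge}) with the disintegration machinery developed above. First I would apply the structure theory: given $\mu_0, \mu_1 \in \mathcal P(X)$ with finite first moment and $\mu_0 \ll \mm$, the Kantorovich duality produces the $1$-Lipschitz potential $\f$, hence the $\sfd$-cyclically monotone set $\Gamma$ and its transport set $\T$, partitioned by the rays $\{X_q\}_{q\in Q}$; by Lemma \ref{L:mapoutside} it is enough to produce an optimal map between $\mu_0\llcorner_{\T}$ and $\mu_1\llcorner_{\T}$ and then extend by the identity on the complement via \eqref{E:extere}. By Proposition \ref{P:nonatoms} (whose hypotheses \ref{A:1} and non-branching are exactly what we assume), for $\qq$-a.e. $q\in Q$ the conditional measure $\mm_q$, and therefore $\mu_{0\,q} \ll \mm_q$, has no atoms. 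Simultaneously the disintegration of $\mu_1$ along the same partition yields conditionals $\mu_{1\,q}$, and one checks (as in Lemma \ref{L:balancing}, using that $\T$ does not branch so that $\mu_0$ and $\mu_1$ induce the same quotient measure) that $\mu_{0\,q}(X_q) = \mu_{1\,q}(X_q) = 1$ for $\qq$-a.e. $q$.

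Next, for $\qq$-a.e. $q$ I would transport the measure space $X_q$ to an interval of $\R$ through the ray map $g(q,\cdot)$, which is an isometry by Proposition \ref{P:gammaclass}. Since $\mu_{0\,q}$ is atomless, Theorem \ref{T:oneDmonge} gives a non-decreasing (monotone rearrangement) map $T_q : X_q \to X_q$ with $(T_q)_\sharp \mu_{0\,q} = \mu_{1\,q}$, which is optimal for the cost $\sfd$ on $X_q$ (here $\phi(r)=r$ is convex, so part (2) applies). One must then glue: define $T(x) := T_{\QQ(x)}(x)$ for $x\in\T$ and $T(x):=x$ otherwise. Measurability of $x\mapsto T(x)$ follows from the Borel measurability of the quotient map $\QQ$ (Proposition \ref{P:Qlevelset}), the Borel regularity of $g$, and the fact that the rearrangement $T_q$ is built from the distribution functions $H_q, F_q$, which depend measurably on $q$; a standard measurable-selection or monotone-class argument makes this precise.

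Finally I would verify optimality of the glued map. Since $(x,T(x)) \in \Gamma$ for $\mu_0$-a.e. $x$ — because on each ray $T_q$ is the monotone rearrangement and hence preserves the $\f$-ordering, so $\f(x)-\f(T(x)) = \sfd(x,T(x))$ — the plan $(\mathrm{Id},T)_\sharp\mu_0$ is concentrated on $\Gamma$, which by Kantorovich duality characterizes optimality among all of $\Pi(\mu_0,\mu_1)$. Equivalently, integrating the one-dimensional optimality $\int_{X_q}\sfd(x,T_q(x))\,\mu_{0\,q}(dx) \le \int_{X_q}\sfd(x,S(x))\,\mu_{0\,q}(dx)$ against $\qq$ and using that any competitor plan $\pi$ also disintegrates over $Q$ with $\pi_q \in \Pi(\mu_{0\,q},\mu_{1\,q})$, one recovers $\int_X \sfd(x,T(x))\,\mu_0(dx) = \min_{\pi\in\Pi(\mu_0,\mu_1)}\int\sfd\,d\pi$.

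I expect the main obstacle to be the measurability of the glued map $T$: one needs the whole family of one-dimensional rearrangements $\{T_q\}_{q\in Q}$ to fit together into a single Borel (or at least universally measurable) map on $\T$. This requires being careful that the disintegration of $\mu_1$ is strongly consistent along the same partition used for $\mm$, that the distribution functions $s\mapsto H_q(s)$ and $t\mapsto F_q(t)$ are jointly measurable in $(q,s)$ and $(q,t)$, and that the $\sup$ defining $T_q$ in Theorem \ref{T:oneDmonge} then yields a measurable dependence on $q$; the remaining verifications (no atoms, mass balance on rays, optimality) are comparatively routine given the results already established.
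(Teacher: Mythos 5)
Your proposal is correct and follows essentially the same route as the paper: disintegrate along the rays of the transport set, invoke Proposition \ref{P:nonatoms} to get atomless conditionals $\mu_{0\,q}$, apply the one-dimensional monotone rearrangement of Theorem \ref{T:oneDmonge} ray by ray, glue via the ray map, and conclude optimality from $(x,T(x))\in\Gamma$. The one point you flag but leave open --- how to localize $\mu_1$, which is not assumed absolutely continuous and may charge $\T_{e}\setminus\T$ --- is handled in the paper by disintegrating an optimal plan $\pi$ with $\pi(\Gamma)=1$ over the partition $\{X_{q}\times\T_{e}\}_{q\in Q}$ and setting $\mu_{1\,q}:=P_{2\,\sharp}\pi_{q}$, which also gives the measurable dependence on $q$ needed for the Borel regularity of $T$.
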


Theorem \ref{T:mongeff} was presented in \cite{biacava:streconv} assuming the space to be non-branching, while here we assume essentially non-branching.

\begin{proof}

{\bf Step 1.} One dimensional reduction of $\mu_{0}$. \\
Let $\f : X \to \R$ be the Kantorovich potential from the assumptions and $\T$ the corresponding transport set. Accordingly 
$$
\mm\llcorner_{\T} = \int_{Q} \mm_{q}\,\qq(dq), 
$$
with $\mm_{q}(X_{q}) = 1$ for $\qq$-a.e. $q \in Q$. Moreover from \ref{A:1} for $\qq$-a.e. $q \in Q$ 
the conditional $\mm_{q}$ has no atoms, i.e. $\mm_{q}(\{z \}) = 0$ for all $z \in X$. 
From Lemma \ref{L:mapoutside}, we can assume that $\mu_{0}(\T_{e}) = \mu_{1}(\T_{e}) = 1$. Since $\mu_{0} = \r_{0} \mm$, with $\r_{0} : X \to [0,\infty)$,
from Theorem \ref{T:equivalence} we have $\mu_{0}(\T) = 1$. Hence
$$
\mu_{0} =  \int_{Q} \r_{0} \mm_{q} \, \qq(dq) =  \int_{Q}  \mu_{0 \, q} \, \qq_{0}(dq),  \qquad 
\mu_{0\, q} : = \r_{0} \mm_{q} \left(\int_{X} \r_{0}(x) \mm_{q}(dx) \right)^{-1},
$$
and $\qq_{0} = \QQ_{\sharp} \mu_{0}$. In particular $\mu_{0,\, q}$ has no atoms and $\mu_{0\,q}(X_{q}) = 1$.
\medskip

{\bf Step 2.} One dimensional reduction of $\mu_{1}$. \\
As we are not making any assumption on $\mu_{1}$ we cannot exclude that $\mu_{1}(\T_{e} \setminus \T) >0$ 
and therefore to localize $\mu_{1}$ one cannot proceed as for $\mu_{0}$.
Consider therefore an optimal transport plan $\pi$ with $\pi(\Gamma) = 1$. Since $\pi (\T \times \T_{e}) = 1$ and a partition of $\T$ is given, 
we can consider the following family of sets $\{ X_{q} \times \T_{e}\}_{q\in Q}$ as a partition of $\T \times \T_{e}$; 
note indeed that $X_{q} \times \T_{e} \cap X_{q'} \cap \T_{e} = \emptyset$ as soon as $q \neq q'$.
The domain of the quotient map $\QQ : \T \to Q$ can be trivially extended to $\T \times \T_{e}$ by saying that $\QQ(x,z) = \QQ(x)$ 
and observe that 
$$
\QQ_{\sharp} \,\pi(I) =  \pi \left( \QQ^{-1} (I) \right) =\pi \left( \QQ^{-1}(I) \times \T_{e} \right) =\mu_{0}(\QQ^{-1}(I)) =\qq_{0}(I).
$$
In particular this implies that 
$$
\pi = \int_{Q} \pi_{q} \, \qq_{0}(dq), \qquad \pi_{q} (X_{q} \times \T_{e}) = 1, \quad \textrm{for }  \qq_{0}\textrm{-a.e. } q \in Q. 
$$
Then applying the projection 
$$
\mu_{0} = P_{1\,\sharp} \pi = \int_{Q} P_{1\,\sharp}(\pi_{q}) \, \qq_{0}(dq),
$$
and by uniqueness of disintegration $P_{1\,\sharp}(\pi_{q}) = \mu_{0\,\qq}$ for $\qq_{0}$-a.e. $q\in Q$. Then we can find a localization of $\mu_{1}$ as follows:
$$
\mu_{1} = P_{2\,\sharp} \pi = \int_{Q} P_{2\,\sharp}(\pi_{q}) \, \qq_{0}(dq) = \int_{Q} \mu_{1\,q} \, \qq_{0}(dq),
$$
where by definition we posed $\mu_{1\, q} : = P_{2\,\sharp}(\pi_{q})$ and by construction $\mu_{1\, q} (X_{q}) = \mu_{0\,q}(X_{q}) = 1$.
\medskip

{\bf Step 3.} Solution to the Monge problem.\\
For each $q \in Q$ consider the distribution functions
$$
H(q,t) := \mu_{0\,q}((-\infty,t)), \quad F(q, t) := \mu_{1\,q }((-\infty,t)),
$$
where for ease of notation $\mu_{i\,q} = g(q,\cdot)^{-1}_{\sharp} \mu_{i\,q}$ for $i = 0,1$.
Then define $\hat T$, as Theorem \ref{T:oneDmonge} suggests, by
$$
\hat T(q,s) := \Big(q, \sup \big\{ t : F(q,t) \leq H(q,s) \big\} \Big). 
$$
Note that since $H$ is continuous ($\mu_{0\,q}$ has no atoms), the map $s \mapsto \hat T(q,s)$ is well-defined.
Then define the transport map $T: \T \to X$ as $g\circ \hat T \circ g^{-1}$.
It is fairly easy to observe that 
$$
T_{\sharp} \,\mu_{0} = \int_{Q} \left(g\circ \hat T \circ g^{-1}\right)_{\sharp} \mu_{0\, q}\, \qq_{0}(dq) = \int_{Q} \mu_{1\, q} \,\qq_{0}(dq) = \mu_{1};
$$
moreover $(x,T(x)) \in \Gamma$ and therefore the graph of $T$ is $\sfd$-cyclically monotone and therefore the map $T$ is optimal.
Extend $T$ to $X$ as the identity. 

It remains to show that it is Borel. First observe that, possibly taking a compact subset of $Q$
the map $q \mapsto (\mu_{0\,q},\mu_{1\,q})$ can be assumed to be weakly continuity; it follows that the maps
$$
\dom(g) \ni (q,t) \mapsto H(q,t) := \mu_{0\,q}((-\infty,t)), \ \ (q,t) \mapsto F(q,t) := \mu_{1\,q}((-\infty,t))
$$
are lower semicontinuous.  Then for $A$ Borel,
$$
\hat T^{-1}(A \times [t,+\infty)) = \big\{ (q,s) : q \in A, H(q,s) \geq F(q,t) \big\} \in \mathcal{B}(Q \times \R),
$$
and therefore the same applies for $T$.
\end{proof}

If $(X,\sfd,\mm)$ verifies $\MCP$ then it also verifies \ref{A:1}, see Proposition \ref{P:MCP-1}. So we have the following

\begin{corollary}[Corollary 9.6 of \cite{biacava:streconv}]\label{C:MCP-Monge}
Let $(X,\sfd,\mm)$ be a  non-branching metric measure space verifying $\MCP(K,N)$. Let $\mu_{0}$ and $\mu_{1}$ be probability measures
with finite first moment and $\mu_{0}\ll \mm$. Then there exists a Borel optimal transport map $T: X \to X$ solution to the Monge problem.
\end{corollary}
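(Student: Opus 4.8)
The plan is to obtain the statement as a direct application of Theorem~\ref{T:mongeff}, the point being that $\MCP(K,N)$ forces the regularity condition \ref{A:1} on the transport set to hold. First I would fix the data: since $\mu_0,\mu_1$ have finite first moment, Kantorovich duality for the cost $\sfd$ yields a $1$-Lipschitz Kantorovich potential $\f:X\to\R$ for $(\mu_0,\mu_1)$, with associated $\sfd$-cyclically monotone set $\Gamma$ and transport set $\T$. Since a non-branching m.m.s.\ satisfying $\MCP(K,N)$ has the property that optimal transference plans with absolutely continuous first marginal are concentrated on graphs, Proposition~\ref{P:nobranch} and Theorem~\ref{T:RCD} apply, so $\mm\llcorner_\T=\int_Q\mm_q\,\qq(dq)$ is the well-defined disintegration along the ray partition $\{X_q\}_{q\in Q}$, and the ray map $g$ of Definition~\ref{D:mongemap} together with the evolution of Definition~\ref{D:evolution} are at our disposal.

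The only substantial point is to check that the transport set attached to $\f$ satisfies assumption~\ref{A:1}: that $\mm(C_t)>0$ for uncountably many $t\in\R$ whenever $C\subset\T$ is compact with $\mm(C)>0$. This is exactly what the $\MCP(K,N)$ localization provides. Indeed, by Proposition~\ref{P:mcp} — equivalently, by the density estimates of Theorem~\ref{T:densityestimates} combined with \eqref{E:regularityh}, which force $h_q>0$ on all of $\dom(g(q,\cdot))$ — one even gets the stronger property \ref{A:2}, namely that $\{t\in\R:\mm(C_t)>0\}$ has positive $\L^1$-measure; such a set being uncountable, \ref{A:1} follows. This is the implication ``$\MCP(K,N)\Rightarrow\ref{A:1}$'' that I would invoke through Proposition~\ref{P:MCP-1}.

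Granting \ref{A:1}, Theorem~\ref{T:mongeff} applies verbatim to the pair $\mu_0\ll\mm$, $\mu_1$ together with the potential $\f$, and produces a Borel map $T:X\to X$ with
\[
\int_X\sfd\big(x,T(x)\big)\,\mu_0(dx)=\min_{\pi\in\Pi(\mu_0,\mu_1)}\int_{X\times X}\sfd(x,y)\,\pi(dxdy),
\]
which is the asserted solution of the Monge problem. The expected main obstacle therefore does not lie in this corollary at all but in the already-settled implication ``$\MCP(K,N)\Rightarrow\ref{A:1}$'' (Proposition~\ref{P:mcp}, Proposition~\ref{P:MCP-1}); granting that, the corollary is merely a bookkeeping reduction to Theorem~\ref{T:mongeff}.
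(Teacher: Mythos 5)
Your proposal is correct and follows essentially the same route as the paper, whose entire proof of this corollary is the observation that $\MCP(K,N)$ implies \ref{A:1} (indeed the stronger \ref{A:2}, via Proposition \ref{P:mcp} and Proposition \ref{P:MCP-1}) so that Theorem \ref{T:mongeff} applies. The only cosmetic difference is that you justify the well-posedness of the ray partition by invoking that non-branching $\MCP$ spaces have $W_2$-optimal plans concentrated on graphs (the hypothesis of Proposition \ref{P:nobranch}), whereas the paper handles the non-branching case directly through Remark \ref{R:nobranch}, which gives $\mm(A_{+})=\mm(A_{-})=0$ under \ref{A:1} without passing through that hypothesis.
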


Corollary \ref{C:MCP-Monge} in particular implies the existence of solutions to the Monge problem in the Heisenberg group when $\mu_{0}$ is assumed 
to be absolutely continuous with respect to the left-invariant Haar measure. 

\begin{theorem}[Monge problem in the Heisenberg group]
Consider $(\HH^n, \sfd_c,\L^{2n+1} )$, the $n$-dimensional Heisenberg group endowed with the Carnot-Carath\'eodory distance $\sfd_{c}$ and 
the $(2n+1)$-Lebesgue measure that coincide with the Haar measure on $(\HH^n, \sfd_c)$ under the identification $\HH^n\simeq \R^{2n+1}$.
Let $\mu_{0}$ and $\mu_{1}$ be two probability measures with finite first moment and $\mu_{0}\ll \L^{2n+1}$.  
Then there exists a Borel optimal transport map $T: X \to X$ solution to the Monge problem.
\end{theorem}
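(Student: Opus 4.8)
The plan is to obtain the statement as an immediate application of Corollary \ref{C:MCP-Monge}: it suffices to check that $(\HH^n,\sfd_c,\L^{2n+1})$ fits the hypotheses of that corollary, namely that the underlying metric space is proper, geodesic and non-branching, and that it verifies the measure contraction property $\MCP(K,N)$ for suitable $K\in\R$, $N\in[1,\infty)$. Since the corollary is quoted from \cite{biacava:streconv} in the form valid for a general Radon reference measure, the fact that $\L^{2n+1}(\HH^n)=\infty$ (rather than $=1$) plays no role, as $\MCP$ and non-branching are scaling invariant and local in nature.

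First I would recall the structural facts about the Heisenberg group. Under the identification $\HH^n\simeq\R^{2n+1}$, the space $(\HH^n,\sfd_c)$ is a Carnot group, hence a proper length (in particular geodesic) metric space, and $\L^{2n+1}$ is a constant multiple of its Haar measure. Next, all length-minimizing curves in $\HH^n$ are normal geodesics — there are no strictly abnormal minimizers — and their explicit description shows that $(\HH^n,\sfd_c)$ is non-branching in the sense of the excerpt: if two elements of $\Geo(\HH^n)$ agree on an initial interval $[0,\bar t\,]$, they agree on all of $[0,1]$. Finally, by B.\ Juillet's result the metric measure space $(\HH^n,\sfd_c,\L^{2n+1})$ satisfies $\MCP(0,2n+3)$, and $2n+3$ is the sharp value of the dimensional parameter.

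With these three inputs, Corollary \ref{C:MCP-Monge} applies verbatim to $K=0$, $N=2n+3$ and the given marginals $\mu_0,\mu_1\in\mathcal{P}(\HH^n)$ with finite first moment and $\mu_0\ll\L^{2n+1}$, producing a Borel map $T:\HH^n\to\HH^n$ with $T_\sharp\mu_0=\mu_1$ and
$$
\int_{\HH^n}\sfd_c(x,T(x))\,\mu_0(dx)=\min_{\pi\in\Pi(\mu_0,\mu_1)}\int_{\HH^n\times\HH^n}\sfd_c(x,y)\,\pi(dxdy).
$$
Unwinding the proof of the corollary, one picks a $1$-Lipschitz Kantorovich potential $\f$ for $(\mu_0,\mu_1)$, decomposes $\HH^n$ into the transport set $\T$ and the stationary set, disintegrates $\L^{2n+1}\llcorner_\T$ along the transport rays, invokes Proposition \ref{P:MCP-1} to guarantee that $\MCP(0,2n+3)$ forces property \ref{R:1} (non-atomicity of $\qq$-a.e.\ conditional $\mm_q$), and glues the one-dimensional monotone rearrangements of Theorem \ref{T:oneDmonge} into the optimal Borel map, as in the proof of Theorem \ref{T:mongeff}.

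The main obstacle is conceptual rather than computational. One must resist using the $\CD$-based results of Section \ref{S:ConditionalMeasures}, because $(\HH^n,\sfd_c,\L^{2n+1})$ genuinely fails $\CD(K,N)$ for every choice of $K$ and $N$; the whole point is that the regularity \ref{R:1} of the conditional measures — and hence solvability of the Monge problem — already survives under the strictly weaker $\MCP$ hypothesis. Thus the only space-specific ingredients are the non-branching of Carnot–Carathéodory geodesics and Juillet's $\MCP(0,2n+3)$ estimate; everything else is the abstract one-dimensional localization machinery assembled in the previous sections.
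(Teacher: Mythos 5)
Your proposal is correct and follows exactly the route the paper intends: the theorem is stated there as an immediate consequence of Corollary \ref{C:MCP-Monge}, justified by precisely the facts you cite (properness, geodesicity and non-branching of $(\HH^n,\sfd_c)$, and Juillet's $\MCP(0,2n+3)$). The additional observations about the failure of $\CD(K,N)$ in $\HH^n$ and the irrelevance of the normalization $\mm(X)=1$ are accurate and consistent with the paper's framework.
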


\begin{remark}
The techniques used so far were successfully used also to threat the more general case of infinite dimensional spaces with curvature bound, see \cite{cava:Wiener} where 
the existence of solutions for the Monge minimization problem in the Wiener space is proved. 
Note that the material presented in the previous sections can be obtained also without assuming the existence of a $1$-Lipschitz Kantorovich potential (e.g. the Wiener space);  
the decomposition of the space in geodesics and the associated disintegration of the reference measures can be obtained starting from a generic $\sfd$-cyclically monotone set. 
For all the details see \cite{biacava:streconv}.
\end{remark}

\bigskip
\subsection{Isoperimetric inequality}

We now turn to the second main application of techniques reviewed so far, 
the L\'evy-Gromov isoperimetric inequality in singular spaces. The results of this section are taken from \cite{CM1,CM2}.

\begin{theorem}[Theorem 1.2 of \cite{CM1}]\label{T:iso}
Let $(X,\sfd,\mm)$ be a metric measure space with $\mm(X)=1$, verifying  the essentially non-branching property and $\CD_{loc}(K,N)$ for some $K\in \R,N \in [1,\infty)$.
Let $D$ be the diameter of $X$, possibly assuming the value $\infty$.
\medskip

Then for every $v\in [0,1]$, 
$$
\cI_{(X,\sfd,\mm)}(v) \ \geq \ \cI_{K,N,D}(v), 
$$
where $\cI_{K,N,D}$ is the model isoperimetric profile defined in \eqref{defcI}.
\end{theorem}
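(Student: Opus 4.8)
The plan is to implement the one-dimensional localization of Theorem~\ref{T:localize}, reducing the isoperimetric inequality in $X$ to the one-dimensional model inequality on each transport ray. First, fix a Borel set $E\subset X$ with $\mm(E)=v$; the endpoint cases $v\in\{0,1\}$ are trivial, so assume $0<v<1$. Set $f:=\ind_{E}-v$, so that $\int_{X}f\,\mm=0$, and assume (reducing to it by a truncation argument when $D=\infty$; it is automatic when $D<\infty$, e.g.\ whenever $K>0$ by the generalized Bonnet--Myers theorem) that $\int_{X}|f|\,\sfd(x,x_{0})\,\mm(dx)<\infty$, so that Theorem~\ref{T:localize} applies. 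Since $0<v<1$ makes $f$ nowhere zero, the set $Z$ of that theorem is $\mm$-negligible, and we obtain a partition $\{X_{q}\}_{q\in Q}$ of a full-measure subset of $X$ into geodesics and a disintegration $\mm=\int_{Q}\mm_{q}\,\qq(dq)$ such that, for $\qq$-a.e.\ $q$: $\mm_{q}(X_{q})=1$; $q\mapsto\mm_{q}$ is a $\CD(K,N)$ disintegration; and $\int_{X_{q}}f\,\mm_{q}=0$, i.e.\ $\mm_{q}(E)=v$.

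Next I would read off the isoperimetric comparison on each ray. Via the ray map, $X_{q}$ is isometric to an interval $I_{q}\subset\R$ of length $|I_{q}|\le D$, and under this isometry $\mm_{q}$ becomes $h_{q}\,\L^{1}\llcorner_{I_{q}}$, where $h_{q}$ satisfies the curvature inequality \eqref{E:curvdensmm} (equivalently \eqref{E:curvdensmmR}) by Theorem~\ref{T:CDKN-1}, is continuous (locally Lipschitz, by \eqref{E:regularityh}, recalling that $\CD_{loc}(K,N)$ implies $\MCP(K,N)$), and is constant if $N=1$. After translating $I_{q}$ inside $[0,D]$, the probability measure $h_{q}\,\L^{1}$ therefore belongs to $\mathcal{F}^{s}_{K,N,D}$ and assigns mass $v$ to the set $A_{q}:=g(q,\cdot)^{-1}(E\cap X_{q})$. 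By the very definition of $\cI^{s}_{K,N,D}$ and by Theorem~\ref{thm:I=Is},
$$
(h_{q}\,\L^{1})^{+}(A_{q})\ \geq\ \cI^{s}_{K,N,D}(v)\ =\ \cI_{K,N,D}(v),
$$
and since $g(q,\cdot)$ is an isometry this is exactly the inequality $\mm_{q}^{+}(E\cap X_{q})\ \geq\ \cI_{K,N,D}(v)$, where $\mm_{q}^{+}$ denotes the Minkowski content computed inside the geodesic $X_{q}$ (i.e.\ for the restricted distance).

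Finally, I would glue the fibrewise bounds. For every $\ve>0$ one has $E^{\ve}\cap X_{q}\supseteq(E\cap X_{q})^{\ve}$, the latter neighbourhood being taken within $X_{q}$; since each $\mm_{q}$ is concentrated on $X_{q}$ and $\mm(X\setminus\mathcal{T})=0$, this gives
$$
\frac{\mm(E^{\ve})-\mm(E)}{\ve}=\int_{Q}\frac{\mm_{q}(E^{\ve})-\mm_{q}(E)}{\ve}\,\qq(dq)\ \geq\ \int_{Q}\frac{\mm_{q}((E\cap X_{q})^{\ve})-\mm_{q}(E\cap X_{q})}{\ve}\,\qq(dq).
$$
Picking $\ve_{n}\downarrow 0$ realizing the $\liminf$ defining $\mm^{+}(E)$, the integrands on the right are nonnegative, so Fatou's lemma together with the previous paragraph yields
$$
\mm^{+}(E)\ \geq\ \int_{Q}\mm_{q}^{+}(E\cap X_{q})\,\qq(dq)\ \geq\ \int_{Q}\cI_{K,N,D}(v)\,\qq(dq)\ =\ \cI_{K,N,D}(v).
$$
Taking the infimum over all Borel $E$ with $\mm(E)=v$ gives $\cI_{(X,\sfd,\mm)}(v)\ge\cI_{K,N,D}(v)$, as claimed.

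The substantive input is supplied by Theorems~\ref{T:localize}, \ref{T:CDKN-1} and \ref{thm:I=Is}; granting those, the step I expect to require the most care is the gluing, namely relating the $N$-dimensional Minkowski content of $E$ to the one-dimensional contents of the slices $E\cap X_{q}$. This rests on the elementary inclusion $E^{\ve}\cap X_{q}\supseteq(E\cap X_{q})^{\ve}$ and on a one-sided Fatou estimate (the difference quotients are nonnegative because $E\subseteq E^{\ve}$). Secondary technical points are the verification that the conditional densities $h_{q}$ genuinely lie in $\mathcal{F}^{s}_{K,N,D}$ (continuity of $h_{q}$ and the diameter bound $|I_{q}|\le D$), and, when $D=\infty$, the truncation needed to meet the integrability hypothesis of Theorem~\ref{T:localize}.
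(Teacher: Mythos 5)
Your proposal is correct and follows essentially the same route as the paper's proof: localization via Theorem \ref{T:localize} applied to $f=\chi_{E}-v$, the fibrewise bound through $h_{q}\mathcal{L}^{1}\in\mathcal{F}^{s}_{K,N,D}$ together with Theorem \ref{thm:I=Is}, and the gluing via the inclusion $(E\cap X_{q})^{\ve}\subset E^{\ve}\cap X_{q}$ followed by Fatou's lemma. The only (immaterial) deviation is the treatment of $D=\infty$: rather than a truncation argument to secure the integrability hypothesis, the paper observes that $K>0$ forces $D<\infty$ by Bonnet--Myers, while for $K\le 0$ and $D=\infty$ one has $\cI_{K,N,\infty}\equiv 0$, so the inequality is trivial and no localization is needed.
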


\begin{proof}
First of all we can assume $D<\infty$ and therefore $\mm \in \mathcal{P}_{2}(X)$: indeed from the Bonnet-Myers Theorem if $K>0$ then $D<\infty$, and if $K\leq 0$ and $D=\infty$ then the model isoperimetric profile \eqref{defcI} trivializes, i.e. $\cI_{K,N,\infty}\equiv 0$ for $K\leq 0$.

For $v=0,1$ one can take as competitor the empty set and the whole space respectively, so it trivially holds 
$$
\cI_{(X,\sfd,\mm)}(0)=\cI_{(X,\sfd,\mm)}(1)= \cI_{K,N,D}(0)=\cI_{K,N,D}(1)=0.
$$
Fix then $v\in(0,1)$ and let $A\subset X$  be an arbitrary Borel subset of $X$ such that $\mm(A)=v$.
Consider the $\mm$-measurable function $f(x) : = \chi_{A}(x)  - v$ and notice that  $\int_{X} f \, \mm = 0$. 
Thus $f$ verifies the  hypothesis of Theorem \ref{T:localize} and noticing that $f$ is never null, 
we can decompose $X = Y \cup \mathcal{T}$ with 
$$
\mm(Y)=0, \qquad   \mm\llcorner_{\mathcal{T}} = \int_{Q} \mm_{q}\, \qq(dq), 
$$
with $\mm_{q} = g(q,\cdot)_\sharp \left( h_{q} \cdot \mathcal{L}^{1}\right)$; 
moreover,  for $\qq$-a.e. $q \in Q$,  the density $h_{q}$ verifies \eqref{E:curvdensmm}  and 
$$
\int_{X} f(z) \, \mm_{q}(dz) =  \int_{\dom(g(q,\cdot))} f(g(q,t)) \cdot h_{q}(t) \, \mathcal{L}^{1}(dt) = 0.
$$
Therefore 
\begin{equation}\label{eq:volhq}
v=\mm_{q} ( A \cap \{ g(q,t) : t\in \R \} ) = (h_{q}\mathcal{L}^1) (g(q,\cdot)^{-1}(A)), \quad \text{ for $\qq$-a.e. $q \in Q$}. 
\end{equation}
For every $\ve>0$ we  then have  
\begin{align*}
\frac{\mm(A^\ve)-\mm(A)}{\ve} 	&~  =  \frac{1}{\ve} \int_{\mathcal{T}} \chi_{A^\ve\setminus A} \,\mm(dx) 
									=  \frac{1}{\ve} \int_{Q} \left( \int_{X}   \chi_{A^\ve\setminus A} \, \mm_{q} (dx) \right)\, \qq(dq) \crcr
						&~ =    \int_{Q} \frac{1}{\ve} \left( \int_{\dom(g(q,\cdot))}  \chi_{A^\ve\setminus A} \,h_{q}(t) \, \mathcal{L}^{1}(dt) \right)\, \qq(dq) \crcr
						&~ =    \int_{Q} \left(  \frac{(h_{q}\mathcal{L}^1)(g(q,\cdot)^{-1}(A^\ve))-  (h_{q}\mathcal{L}^1)(g(q,\cdot)^{-1}(A))}{\ve}  \right)\, \qq(dq) \crcr
						&~ \geq    \int_{Q} \left(  \frac{(h_{q}\mathcal{L}^1)((g(q,\cdot)^{-1}(A))^\ve)-  (h_{q}\mathcal{L}^1)(g(q,\cdot)^{-1}(A))}{\ve}  \right)\, \qq(dq), \crcr
\end{align*}
where the last inequality is given by the inclusion $ (g(q,\cdot)^{-1}(A))^\ve \cap \supp(h_q) \subset  g(q,\cdot)^{-1}(A^\ve)$. \\
Recalling \eqref{eq:volhq} together with $h_{q}\mathcal{L}^1\in \mathcal{F}^{s}_{K,N,D}$, by Fatou's Lemma we get
\begin{align*}
\mm^+(A)		&~ = 	\liminf_{\ve\downarrow 0} \frac{\mm(A^\ve)-\mm(A)}{\ve} \crcr
			&~\geq 	\int_{Q}  \left(  \liminf_{\ve\downarrow 0} \frac{(h_{q}\mathcal{L}^1)((g(q,\cdot)^{-1}(A))^\ve) -  
											(h_{q}\mathcal{L}^1)(g(q,\cdot)^{-1}(A))}{\ve}  \right)\, \qq(dq) \crcr
			&~ =   	\int_{Q} \left( (h_{q}\mathcal{L}^1)^+(g(q,\cdot)^{-1}(A))  \right)\, \qq(dq) \crcr
			&~ \geq  	\int_{Q}  \cI^s_{K,N,D} (v) \, \qq(dq)  \crcr
			&~ =  	\cI_{K,N,D} (v),
\end{align*}
where in the last equality we used Theorem \ref{thm:I=Is}. 
\end{proof}

From the definition of $\mathcal{I}_{K,N,D}$, see \eqref{defcI}, and the smooth results of E. Milman in \cite{Mil},  the estimates proved in Theorem \ref{T:iso} are sharp.

Furthermore, 1-dimensional localization technique permits to obtain rigidity in the following sense: 
if for some $v \in (0,1)$ it holds $\cI_{(X,\sfd,\mm)}(v)= \cI_{K,N,\pi}(v)$, then $(X,\sfd,\mm)$ is a spherical suspension. 
It is worth underlining that to obtain such a result $(X,\sfd,\mm)$ is assumed to be in the more regular class of $\RCD$-spaces.

Even more, one can prove an almost rigidity statement: if $(X,\sfd,\mm)$ is an $\RCD^*(K,N)$ space such that   $\cI_{(X,\sfd,\mm)}(v)$ is close to $\cI_{K,N,\pi}(v)$ 
for some $v \in (0,1)$,  this force $X$ to be close, in the measure-Gromov-Hausdorff distance, to a spherical suspension. What follows is Corollary 1.6 of \cite{CM1}.

\begin{theorem}[Almost equality in L\'evy-Gromov implies mGH-closeness to a spherical suspension] \label{cor:AlmRig}
For every $N\in [2, \infty) $, $v \in (0,1)$, $\ve>0$ there exists $\bar{\delta}=\bar{\delta}(N,v,\ve)>0$ such that the following hold. For every  $\delta \in [0, \bar{\delta}]$, if  $(X,\sfd,\mm)$ is an $\RCD^*(N-1-\delta,N+\delta)$ space satisfying 
$$
\cI_{(X,\sfd,\mm)}(v)\leq \cI_{N-1,N,\pi}(v)+\delta, 
$$
then  there exists an $\RCD^*(N-2,N-1)$ space $(Y, \sfd_Y, \mm_Y)$ with $\mm_Y(Y)=1$ such that 
$$
\sfd_{mGH}(X, [0,\pi] \times_{\sin}^{N-1} Y) \leq \ve. 
$$
\end{theorem}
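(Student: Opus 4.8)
The plan is to argue by compactness and contradiction, reducing the almost rigidity statement to the rigidity statement recalled just above (equality at some $v\in(0,1)$ in the L\'evy--Gromov inequality forces an $\RCD^{*}(N-1,N)$ space of unit mass to be a spherical suspension, \cite{CM1}). Suppose the conclusion failed for some $N\in[2,\infty)$, $v\in(0,1)$, $\ve>0$. Then I would obtain $\delta_{n}\downarrow 0$ and $\RCD^{*}(N-1-\delta_{n},N+\delta_{n})$ spaces $(X_{n},\sfd_{n},\mm_{n})$ of unit mass with
\[
\cI_{(X_{n},\sfd_{n},\mm_{n})}(v)\leq \cI_{N-1,N,\pi}(v)+\delta_{n},
\]
yet $\sfd_{mGH}\big(X_{n},[0,\pi]\times_{\sin}^{N-1}Y\big)>\ve$ for every $\RCD^{*}(N-2,N-1)$ space $Y$ of unit mass.

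For $n$ large $N-1-\delta_{n}>0$, so Bonnet--Myers bounds $\mathrm{diam}(X_{n})\le\pi\sqrt{(N+\delta_{n}-1)/(N-1-\delta_{n})}$ uniformly and Bishop--Gromov gives a uniform doubling bound; by Gromov precompactness together with the stability of the reduced curvature--dimension condition under measured Gromov--Hausdorff convergence with converging parameters, I may pass to a subsequence and assume $(X_{n},\sfd_{n},\mm_{n})\to(X_{\infty},\sfd_{\infty},\mm_{\infty})$ in the mGH sense, where $(X_{\infty},\sfd_{\infty},\mm_{\infty})$ is an $\RCD^{*}(N-1,N)$ space of unit mass and diameter $\le\pi$.

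Next I would squeeze the isoperimetric profile of $X_{\infty}$ at $v$ between the two natural bounds. Picking $A_{n}\subset X_{n}$ with $\mm_{n}(A_{n})=v$ and $\mm_{n}^{+}(A_{n})\le\cI_{(X_{n},\sfd_{n},\mm_{n})}(v)+1/n$, compactness of sets of finite perimeter along the sequence gives a limit set $A_{\infty}$ with $\mm_{\infty}(A_{\infty})=v$ (no mass is lost since the spaces are uniformly compact probability spaces), and lower semicontinuity of the relaxed perimeter under mGH convergence, combined with the coincidence of perimeter and Minkowski content on $\RCD$ spaces, yields $\mm_{\infty}^{+}(A_{\infty})\le\liminf_{n}\mm_{n}^{+}(A_{n})$. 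Using the hypothesis, $\delta_{n}\downarrow 0$ and the continuity of $(K',N',D',v')\mapsto\cI_{K',N',D'}(v')$, this gives $\cI_{(X_{\infty},\sfd_{\infty},\mm_{\infty})}(v)\le\cI_{N-1,N,\pi}(v)$; on the other hand $X_{\infty}$ satisfies the hypotheses of Theorem \ref{T:iso}, so with Theorem \ref{thm:I=Is} one also has $\cI_{(X_{\infty},\sfd_{\infty},\mm_{\infty})}(v)\ge\cI_{N-1,N,\pi}(v)$. Hence equality holds at $v$, and the rigidity statement provides an $\RCD^{*}(N-2,N-1)$ space $(Y,\sfd_{Y},\mm_{Y})$ of unit mass with $X_{\infty}$ isomorphic to $[0,\pi]\times_{\sin}^{N-1}Y$. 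Then $\sfd_{mGH}(X_{n},[0,\pi]\times_{\sin}^{N-1}Y)\to0$, contradicting the choice of the sequence; declaring $\bar\delta(N,v,\ve)$ to be a threshold past which no such counterexample can exist finishes the argument.

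The hard part is the third paragraph: controlling how the isoperimetric profile behaves in the limit. I expect the main work to be in (i) a compactness theory for sets of finite perimeter (finite Minkowski content) along an mGH--converging sequence of $\RCD^{*}$ spaces, (ii) lower semicontinuity of the perimeter under such convergence, and (iii) the identification, on $\RCD$ spaces, of the relaxed perimeter with the Minkowski content appearing in the definition of $\cI_{(X,\sfd,\mm)}$, so that the semicontinuous functional genuinely bounds $\mm_{\infty}^{+}(A_{\infty})$. By comparison, the continuity of the model profile $\cI_{K,N,D}$ in its parameters, needed to absorb the errors $\delta_{n}$, is routine given E. Milman's explicit description.
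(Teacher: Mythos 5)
Your compactness-and-contradiction skeleton is reasonable, and the first, second and fourth paragraphs (Gromov precompactness, stability of $\RCD^{*}$ under mGH convergence with converging parameters, and the reduction to the rigidity theorem) are fine. But the third paragraph contains a genuine gap, and it is not how the result is proved in \cite{CM1} (note that the survey itself does not prove this theorem; it refers to \cite{CM1}). What you need is $\cI_{(X_{\infty},\sfd_{\infty},\mm_{\infty})}(v)\le\liminf_{n}\cI_{(X_{n},\sfd_{n},\mm_{n})}(v)$, where the profile is defined through the Minkowski content \eqref{E:profile}. Even granting $L^{1}$-strong compactness of the near-minimizers $A_{n}$ and lower semicontinuity of the perimeter along the mGH-converging sequence (themselves substantial results, in the spirit of Ambrosio--Honda, not available in this paper or its references), you would only obtain a bound on the \emph{perimeter} of $A_{\infty}$; since the inequality valid in general is $\mm_{\infty}^{+}(A_{\infty})\ge \mathsf{Per}(A_{\infty})$, this controls the wrong quantity. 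To conclude you would need that the Minkowski-content isoperimetric profile of $X_{\infty}$ is dominated by its perimeter-based profile, i.e.\ the coincidence of the two profiles on $\RCD$ spaces. Your item (iii), as literally stated (relaxed perimeter equals Minkowski content for the sets involved), is false for general Borel sets even in $\R^{n}$, and the correct profile-level statement is a deep theorem, not a routine identification.

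The proof in \cite{CM1} sidesteps the semicontinuity of the isoperimetric profile entirely. One first shows that almost equality in L\'evy--Gromov forces the diameter to be at least $\pi-\ve'$: by Theorem \ref{T:iso} one has $\cI_{(X,\sfd,\mm)}(v)\ge\cI_{N-1-\delta,N+\delta,D}(v)$ with $D=\diam X$, the model profile is continuous in $(K,N,D)$ and non-increasing in $D$, and $\cI_{N-1,N,D}(v)>\cI_{N-1,N,\pi}(v)$ for every $D<\pi$; hence the hypothesis $\cI_{(X,\sfd,\mm)}(v)\le\cI_{N-1,N,\pi}(v)+\delta$ pins $D$ near $\pi$. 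One then invokes Ketterer's almost-rigidity of the maximal diameter for $\RCD^{*}(N-1-\delta,N+\delta)$ spaces, whose compactness argument only requires the trivial continuity of the diameter under measured Gromov--Hausdorff convergence, rather than any convergence theory for sets of finite perimeter. To repair your argument you must either develop the BV-convergence machinery together with the equality of the two isoperimetric profiles on the limit space, or reroute through the diameter as above.
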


We refer to \cite{CM1} for the precise rigidity statement (Theorem 1.4, \cite{CM1}) and for the proof of Theorem 1.4 and Corollary 1.6 of \cite{CM1}.
See also \cite{CM1} for the precise definition of spherical suspension.
We conclude by recalling that 1-dimensional localization was used also in \cite{CM2} to obtain sharp version of several functional inequalities 
(e.g. Brunn-Minkowski, spectral gap, Log-Sobolev etc.) in the class of $\CD(K,N)$-spaces. See \cite{CM2} for details.

\end{document}